\newtheorem{thm}{Theorem}[section]
\newtheorem*{thm*}{Theorem}
\newtheorem{lem}[thm]{Lemma}
\newtheorem{prop}[thm]{Proposition}
\theoremstyle{definition}
\newtheorem{defi}[thm]{Definition}
\theoremstyle{remark}
\newtheorem{rmk}[thm]{Remark}
\newtheorem*{rmk*}{Remark}
\numberwithin{equation}{section}
\newcommand{\RR}{\ensuremath{\mathbb{R}}}
\newcommand{\ZZ}{\ensuremath{\mathbb{Z}}}
\newcommand{\bJ}{{\bf J}}
\newcommand{\TT}{\mathbb{T}}
\newcommand{\bT}{{\bf T}}
\newcommand{\floor}[1]{\lfloor{#1}\rfloor}
\newcommand{\bfloor}[1]{\left\lfloor{#1}\right\rfloor}
\newcommand{\ceil}[1]{\lceil{#1}\rceil}
\newcommand{\bceil}[1]{\left\lceil{#1}\right\rceil}
\newcommand{\sfloor}[1]{\left\lfloor{#1}\right\rfloor^{'}}
\newcommand{\sceil}[1]{\left\lceil{#1}\right\rceil^{'}}
\newcommand{\uu}{{\mu'}}
\newcommand{\vv}{{\nu'}}
\newcommand{\cD}{\mathcal{D}} 
\newcommand{\cDprime}{{\mathcal{D}'}}
\newcommand{\cCprime}{{\mathcal{C}'}}
\newcommand{\cCprimebar}{\widetilde{\mathcal{C}}'}
\newcommand{\muu}{u}
\newcommand{\nuu}{v}
\newcommand{\egcd}{{\rm egcd}}
\newcommand{\elcm}{{\rm elcm}}
\newcommand{\alphap}{{\alpha'}}
\newcommand{\betap}{{\beta'}}
\newcommand{\tbetap}{{\sigma'}}
\newcommand{\trhop}{{\tau'}}
\newcommand{\talpha}{{\sigma}}
\newcommand{\trho}{{\tau}}
\newcommand{\Qthree}{\RR^2_{-}}
\newcommand{\mup}{{\mu'}}
\newcommand{\pS}{{S}^{'}}
\newcommand{\Si}{S_{(i)}}
\newcommand{\Sii}{S_{(ii)}}
\newcommand{\Siii}{S_{(iii^*)}}
\DeclareMathOperator{\lcm}{lcm}
\DeclareMathOperator{\rowspan}{row.span}
\definecolor {color_b}{RGB}{255,0,0}
\definecolor {color_c}{RGB}{20, 200, 30}
\definecolor {color_a}{RGB}{0,0,255}
\title[Dilated floor functions having nonnegative commutator II.]
{Dilated floor functions having nonnegative commutator\\
II. Negative dilations}
\author{J.\ C.\ Lagarias}
\address{Dept.\ of Mathematics, University of Michigan,
Ann Arbor, MI 48109--1043}
\email{lagarias@umich.edu}
\author{D.\ H.\ Richman}
\address{Dept.\ of Mathematics, University of Michigan,
Ann Arbor, MI 48109--1043}
\email{hrichman@umich.edu}
\subjclass[2010]{Primary 11A25; Secondary 11B83, 11D07, 11Z05, 26D07, 52C05}
\thanks{Work of the  first author was partially supported by NSF grant DMS-1401224 and DMS-1701576,
by MSRI as a Chern Professor and by a Simons  Fellows in Mathematics grant. MSRI is partially supported
by an NSF grant. 
Work of the second author was partially supported by NSF grant DMS-1600223
and by a Rackham Predoctoral Fellowship.
}
\begin{document}
\begin{abstract}
This paper completes the classification of  the set $S$ of real parameter pairs $(\alpha,\beta)$  
such that the dilated floor functions 
$f_\alpha(x) = \floor{\alpha x}$ and $f_\beta(x) = \floor{\beta x}$ have a nonnegative commutator, i.e.
$ [ f_{\alpha}, f_{\beta}](x) = \floor{\alpha \floor{\beta x}} -  \floor{\beta \floor{\alpha  x}} \geq 0$
for all real $x$.
This paper treats the case where 
both dilation parameters $\alpha, \beta$ are negative.
This result is equivalent to 
classifying all positive $\alpha, \beta$ satisfying
$ \floor{\alpha \ceil{\beta x}} -  \floor{\beta \ceil{\alpha  x}} \geq 0$
for all real $x$.
The classification analysis  is connected with 
the theory of Beatty sequences and with the Diophantine Frobenius problem in two generators.
\end{abstract}

\maketitle

%%%%% uncomment to only show sections, no subsections
\setcounter{tocdepth}{1}
%\tableofcontents

\bibliographystyle{amsplain}

%%%%%%%%%%%%%%%%%%%%%%%%%%
% section  1:  Introduction
%%%%%%%%%%%%%%%%%%%%%%%%%%
\section{Introduction}
The {\em floor function}  
$\lfloor x \rfloor$
rounds a real number down to the nearest integer.
For  a real parameter $\alpha$,   the 
{\em dilated} floor function $f_\alpha(x)= \floor{\alpha x}$ performs discretization at the length scale $\alpha^{-1}$. 
Besides references noted in Part I, dilated floor functions are used  in describing
one-dimensional quasicrystals, see de Bruijn \cite[Sect. 4]{deB81}, \cite{deB89}, Brown \cite{Brown:1993} 
and Mingo \cite{Mingo00}.

This paper continues the study  of commutators
under composition of dilated floor functions 
$$ 
[f_{\alpha}, f_{\beta}](x) 
:= f_{\alpha} \circ f_{\beta}(x) - f_{\beta} \circ f_{\alpha}(x) 
= \floor{\alpha \floor{\beta x}} - \floor{\beta \floor{\alpha x}}.
$$
It  completes  the classification 
of the set $S$ of all pairs $(\alpha, \beta) \in \RR^2$ such that
\begin{equation}\label{eqn:ineq} 
\floor{\alpha \floor{\beta x}} \geq \floor{\beta \floor{\alpha x}} \quad\text{for all }x\in\RR,
\end{equation}
begun in Part I \cite{LagR:2018a}. 
That is, \eqref{eqn:ineq} says  $[f_{\alpha}, f_{\beta}](x) \ge 0$ for all real $x$, which we will often abbreviate 
as $[f_{\alpha}, f_{\beta}]\ge 0$,  omitting  the variable $x$.

\subsection{Classification theorem}

%%%%%%%%%%%%%%%%%%%
%
% Theorem 1.1 MAIN THEOREM Negative dilations 
%
%%%%%%%%%%%%%%%%%%%
\begin{thm}[Negative dilations classification]\label{thm:negative}
Given %dilation factors 
$\alpha<0,$  $ \beta < 0$, the inequality
$
 \floor{\alpha \floor{\beta x}} \geq \floor{\beta \floor{\alpha x}}
$
holds for all $x \in \RR$ if and only if one or more of the following conditions holds:

\begin{enumerate}[(i)]
\item[(i)]\label{it:quad3-a} 
There are integers $m \geq 0$, $n\geq 1$ such that
\begin{equation}\label{quad3-1}
 m \alpha \beta - n \beta  = -\alpha.
\end{equation}

\item[(ii)]
\label{it:quad3-b} 
There are coprime integers $p,q\geq 1$ such that
\begin{equation}\label{quad3-2}
\alpha =- \frac{q}{p}, \quad  - \frac{1}{p} \leq \beta < 0 .
\end{equation}

\item[$(iii^{\ast})$]
 \label{it:quad3-c} There are coprime integers $p, q \geq 1$  and 
 %integers 
  $m\geq 0, n\geq 1, r \geq 1$ such that
\begin{equation}\label{quad3-3}
\alpha = -\frac{q}{p}, \quad \beta = -\frac{1}{p}\left(1 + \frac{1}{r}\left( \frac{m}{p} + \frac{n}{q}-1\right) \right)^{-1}.
\end{equation}
\end{enumerate}
\end{thm}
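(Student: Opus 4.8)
The plan is to pass to positive dilations first. Writing $\alpha = -a$ and $\beta = -b$ with $a, b > 0$ and using the identity $\floor{-at} = -\ceil{at}$, one computes
\[
\floor{\alpha \floor{\beta x}} - \floor{\beta \floor{\alpha x}} = \floor{a\ceil{bx}} - \floor{b\ceil{ax}},
\]
so Theorem~\ref{thm:negative} is equivalent to the classification of all $a, b > 0$ for which $\floor{a\ceil{bx}} \ge \floor{b\ceil{ax}}$ holds for every $x \in \RR$; under $(\alpha, \beta) \mapsto (a, b) = (-\alpha, -\beta)$ condition (i) reads $b(ma + n) = a$, condition (ii) reads $a = q/p$ with $0 < b \le 1/p$, and $(iii^{\ast})$ reads $b = \tfrac{1}{p}\bigl(1 + \tfrac{1}{r}(\tfrac{m}{p} + \tfrac{n}{q} - 1)\bigr)^{-1}$. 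I would carry out the rest of the argument entirely on the ceiling side.

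Next I would discretize the quantifier over $x$. Set $G(x) := \floor{a\ceil{bx}} - \floor{b\ceil{ax}}$. This is a step function whose jumps lie in $\tfrac{1}{a}\ZZ \cup \tfrac{1}{b}\ZZ$, and on the region where $\ceil{ax} = j$ and $\ceil{bx} = k$ one has $G(x) = \floor{ak} - \floor{bj}$. Hence the commutator is nonnegative if and only if $\floor{ak} \ge \floor{bj}$ for every \emph{jointly attainable} integer pair $(j, k)$, that is, every $(j,k)$ with $\bigl(\tfrac{j-1}{a}, \tfrac{j}{a}\bigr] \cap \bigl(\tfrac{k-1}{b}, \tfrac{k}{b}\bigr] \ne \emptyset$. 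Identifying the attainable pairs is a two-dimensional Beatty-sequence question, and I would invoke the three-distance theorem together with the reduction lemmas of Part~I \cite{LagR:2018a} (adapted from floor to ceiling functions) to show that $\inf_x G(x)$ is governed by a short list of extremal pairs, concentrated near the breakpoints $k/b$.

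The core of the proof is a case split according to whether $a$ is rational and whether $b$ is small. When $a = q/p$ is rational, the sequence $k \mapsto \floor{ak}$ satisfies $\floor{a(k+p)} = \floor{ak} + q$, which together with the analogous structure of $\ceil{ax}$ collapses the inequality to a finite family of conditions; deciding when $\floor{ak} \ge \floor{bj}$ can fail at a critical attainable pair becomes, after clearing denominators, the question of whether a prescribed integer lies in the numerical semigroup $\semigroup{p}{q}$ --- that is, the two-generator Diophantine Frobenius problem. The range $0 < b \le 1/p$ is precisely the regime in which no such failure is possible, which yields condition (ii); allowing $b$ to exceed $1/p$ and demanding that the first potential violation be avoided singles out the unique admissible value of $b$ recorded in $(iii^{\ast})$, with $m, n, r$ read off from the relevant Frobenius/semigroup data. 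When $a$ is irrational, an equidistribution argument forces $a$ and $b$ to satisfy an exact linear relation, which after the sign change is $b(ma + n) = a$, i.e., condition (i); one then checks conversely that each such relation does force $G \ge 0$.

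The step I expect to be the main obstacle is the boundary analysis in the rational case: pinning down $\inf_x G(x)$ as $b$ ranges over $(1/p, \infty)$ and isolating the single value at which this infimum equals $0$ rather than $-1$, which is what produces the delicate formula in $(iii^{\ast})$, together with the bookkeeping needed to show that conditions (i), (ii), $(iii^{\ast})$ are jointly exhaustive and to handle the parameter ranges where several of them hold at once. This requires a careful three-distance analysis of the attainable pairs near the worst-case breakpoint and a precise identification of the associated gap of $\semigroup{p}{q}$. Once this is in place, verifying the ``if'' directions and translating back through $(\alpha, \beta) \mapsto (-\alpha, -\beta)$ should be routine.
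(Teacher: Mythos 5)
Your reduction to the ceiling-function inequality $\floor{a\ceil{bx}} \ge \floor{b\ceil{ax}}$ and the discretization into a finite (per fundamental domain) family of attainable integer pairs $(j,k)$ is sound, and it matches the paper's starting move: the paper encodes the same information via strict rounding functions and then, after two birational coordinate changes, as a lattice-avoidance condition $\Lambda_{\mu',\nu'} \cap \cDprime = \emptyset$ and as a torus-subgroup-avoidance condition $\langle(\widetilde{\tbetap},\widetilde{\trhop})\rangle_{\bT}\cap\cCprimebar_{\tbetap,\trhop}=\emptyset$. Your plan is therefore a different (and more coordinate-bound) packaging of the same combinatorics. After that, however, the two routes diverge substantially and your plan has gaps at exactly the places the paper spends its technical effort.

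In the irrational case, you invoke ``an equidistribution argument'' to force a relation $b(ma+n)=a$, but equidistribution only tells you that rational independence of the relevant pair (in the paper's coordinates, of $\tbetap$ and $\trhop$) gives a dense orbit and hence a violation. You still have to classify \emph{which} integer linear dependences $m\tbetap+n\trhop=k$ are compatible with nonnegativity; the paper does a careful case analysis (Cases 2a-1 through 2b-3, using the closed subgroup theorem for Lie groups) that rules out $k\ge 2$, rules out opposite-sign $m,n$ with $k\ne 0$, and sorts the remaining possibilities into its $(i^{\ast})$ and $(ii^{\ast})$. Your sketch silently assumes the relation must have $k=1$ and specific signs; that is precisely what needs proof. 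You also lose case $(ii)$ in the irrational regime: when $a$ is rational but $b$ is irrational, the answer is $0<b\le 1/p$, not a linear relation, and the student framing (``$a$ irrational $\Rightarrow$ (i), $a$ rational $\Rightarrow$ finitely many conditions'') handles this only implicitly.

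In the rational case, you correctly identify the hard step and then stop. The paper's mechanism for producing \emph{all} of the $(iii^{\ast})$ solutions is the family of linear fractional symmetries $\Phi_p^r$ (Theorem~\ref{thm:lin-frac-sym}, proved via the diagonal expansion-contraction Lemma~\ref{lem:diag-expansion} and Lemma~\ref{lem:hyperbola-neg-sporadic}), which lets one start from the $r=1$ case-(i) solutions and pull them into the interval $(-\frac{1}{p},0)$. The converse (necessity) then rests on Lemma~\ref{lem:neg-sporadic-converse}, which inverts the map, and Lemma~\ref{lem:diagonal-vert-boundary}, which reduces to the hyperbola relation. Nothing in your plan plays this role. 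Appealing to the three-distance theorem is also a mismatch: that theorem governs the gap structure of a single rotation orbit, whereas identifying attainable pairs involves \emph{two} dilation scales simultaneously; what is actually needed is the two-dimensional lattice/diagonal picture (or, equivalently, the torus-subgroup picture). Finally, your phrase ``singles out the unique admissible value of $b$'' is misleading: for fixed $\alpha=-\frac{q}{p}$ there are infinitely many sporadic $\beta$, one for each admissible $(m,n,r)$, and the Frobenius/semigroup content of which of these is genuinely ``new'' (not already captured by $(i)$ or $(ii)$) is deliberately deferred by the paper to a companion article. As written, the proposal is a plausible outline, but the central sufficiency-and-necessity analysis for the rational sporadic solutions, and the classification of integer relations in the irrational case, are asserted rather than proved.
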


In this classification,
$(iii^{\ast})$  is a strictly larger set than   $(iii)$   in Theorem 1.3 stated in  Part I;  
however the two theorems are equivalent. 
Case $(iii^{\ast})$ differs from $(iii)$ in  allowing value  $r=1$ and in not  imposing the side condition 
$0<\frac{m}{p} + \frac{n}{q} <1$  in $(iii)$.  
However, all  members of case $(iii^{\ast})$ coming from $r=1$  also  appear in case $(i)$,
and all  members 
 that have  $ \frac{m}{p} + \frac{n}{q} \ge 1$ also  appear in  case $(ii)$.
Condition $(iii^*)$ parameterizes  
{\em all} negative rational solutions in $S$, 
see Theorem \ref{thm:neg-necessary}.
The parametrization in  $(iii^{\ast})$ is  redundant:
%in the sense that 
some $(\alpha, \beta)$ values arise from
multiple  parameter values  $(p,q,m,n,r)$.

Figure \ref{fig:a-b-prime-coord} pictures the solution set $S$
for negative dilations
viewed in the the positive $(\alphap, \betap)$-coordinate system, letting $\alphap = -\alpha, \betap= -\beta$;
the set pictured is   $S' := -S$ in $(\alphap, \betap)$ coordinates.

%%%%%%%%%%%%%%%%%%%%%%%%%%%%%
% Figure 1.2  (alphap, \betap) coordinates. quadrant 3 
%%%%%%%%%%%%%%%%%%%%%%%%%%%%%%%%%%%%%%%%%%%%%%%%%%%%%%%%%%%%%%%%%%%%%%%%%%%%%%%%%%%
\begin{figure}[h]
\begin{center}
\begin{tikzpicture}[scale=0.50]
  \draw[->] (0,0) -- (10,0) node[right] {$\alpha' = -\alpha$};
  \draw[->] (0,0) -- (0,10) node[above] {$\beta' = -\beta$};
  \draw[xscale=6,yscale=8] (1,0.05) -- (1,-0.05) node[below] {$1$};
  \draw[xscale=6,yscale=8] (-0.05,1) -- (0.05,1) node[right] {$1$};
    %\draw[xscale=8,domain=0:1.2,smooth,variable=\x, blue] plot ({\x},{1});
  \draw[xscale=6,yscale=8,domain=1.2:0,smooth,variable=\x, blue] plot ({\x},{\x});
  \draw[xscale=6,yscale=8,domain=1.6:0,smooth,variable=\x, blue] plot ({\x},{\x/2});
  \draw[xscale=6,yscale=8,domain=1.6:0,smooth,variable=\x, blue] plot ({\x},{\x/3});
  \draw[xscale=6,yscale=8,domain=1.6:0,smooth,variable=\x, blue] plot ({\x},{\x/4});
  \draw[xscale=6,yscale=8,domain=1.6:0,smooth,variable=\x, blue] plot ({\x},{\x/5});
  \draw[xscale=6,yscale=8,domain=1.6:0,smooth,variable=\x, blue] plot ({\x},{\x/6});
  \draw[xscale=6,yscale=8,domain=1.6:0,smooth,variable=\x, blue] plot ({\x},{\x/7});
  \draw[xscale=6,yscale=8,domain=1.6:0,smooth,variable=\x, blue] plot ({\x},{\x/8});
  
  \draw[xscale=6,yscale=8,domain=1:0,smooth,variable=\y, red]  plot ({1},{\y});
  \draw[xscale=6,yscale=8,domain=1/2:0,smooth,variable=\y, red]  plot ({1/2},{\y});
  \draw[xscale=6,yscale=8,domain=1/2:0,smooth,variable=\y, red]  plot ({3/2},{\y});
  \draw[xscale=6,yscale=8,domain=1/3:0,smooth,variable=\y, red]  plot ({1/3},{\y});
  \draw[xscale=6,yscale=8,domain=1/3:0,smooth,variable=\y, red]  plot ({2/3},{\y});
  \draw[xscale=6,yscale=8,domain=1/3:0,smooth,variable=\y, red]  plot ({4/3},{\y});
  \draw[xscale=6,yscale=8,domain=1/4:0,smooth,variable=\y, red]  plot ({1/4},{\y});
  \draw[xscale=6,yscale=8,domain=1/4:0,smooth,variable=\y, red]  plot ({3/4},{\y});
  \draw[xscale=6,yscale=8,domain=1/4:0,smooth,variable=\y, red]  plot ({5/4},{\y});
  \draw[xscale=6,yscale=8,domain=1/5:0,smooth,variable=\y, red]  plot ({1/5},{\y});
  \draw[xscale=6,yscale=8,domain=1/5:0,smooth,variable=\y, red]  plot ({2/5},{\y});
  \draw[xscale=6,yscale=8,domain=1/5:0,smooth,variable=\y, red]  plot ({3/5},{\y});
  \draw[xscale=6,yscale=8,domain=1/5:0,smooth,variable=\y, red]  plot ({4/5},{\y});
  \draw[xscale=6,yscale=8,domain=1/5:0,smooth,variable=\y, red]  plot ({6/5},{\y});
  \draw[xscale=6,yscale=8,domain=1/6:0,smooth,variable=\y, red]  plot ({1/6},{\y});
  \draw[xscale=6,yscale=8,domain=1/6:0,smooth,variable=\y, red]  plot ({5/6},{\y});
  \draw[xscale=6,yscale=8,domain=1/6:0,smooth,variable=\y, red]  plot ({7/6},{\y});
  \draw[xscale=6,yscale=8,domain=1/7:0,smooth,variable=\y, red]  plot ({1/7},{\y});
  \draw[xscale=6,yscale=8,domain=1/8:0,smooth,variable=\y, red]  plot ({1/8},{\y});
  
  % sporadic solutions
  \foreach \y in {     9/14   % a = 3/2,  b = 3r/(6r-4)
                 ,     9/16,12/22   %     b = 3r/(6r-2)
                 ,6/11,9/17,12/23,15/29   %     b = 3r/(6r-1)
    }{ \node[circle,inner sep=0.5pt,fill,red] at (3/2*6,1*\y*8) {};  }
  \foreach \y in {4/9,      8/21,10/27 % a = 2/3, b = 2r/(6r-3)
                 ,4/11,6/17,8/23,10/29  %         b = 2r/(6r-1)
    }{ \node[circle,inner sep=0.5pt,fill,red] at (2/3*6,1*\y*8) {};  }
  \foreach \y in {8/15,      16/39,20/51   % a = 4/3, b = 4r/(12r-9)
                                  ,20/54    %       b = 4r/(12r-6)
                 ,8/19,12/31,16/43          %       b = 4r/(12r-5)
                 ,8/21,      16/45          %       b = 4r/(12r-3)
                 ,     12/34                %       b = 4r/(12r-2)
                 ,8/23,12/35                %       b = 4r/(12r-1) 
    }{ \node[circle,inner sep=0.5pt,fill,red] at (4/3*6,1*\y*8) {};  }
  \foreach \y in {4/15,6/25,8/35  % a = 2/5, b = 2r/(10r-5)
                 ,4/17               %       b = 2r/(10r-3)
                 ,4/19,6/29,8/39     %       b = 2r/(10r-1)
    }{ \node[circle,inner sep=0.5pt,fill,red] at (2/5*6,1*\y*8) {};  }
  \foreach \y in {   9/28  % a = 3/4, b = 3r/(12r-8)
                 ,6/19,9/31   %       b = 3r/(12r-5)
                 ,     9/32   %       b = 3r/(12r-4)
                 ,     9/34   %       b = 3r/(12r-2)
                 ,6/23,9/35   %       b = 3r/(12r-1)
    }{ \node[circle,inner sep=0.5pt,fill,red] at (3/4*6,1*\y*8) {};  }
  \foreach \y in {   15/44  % a = 5/4, b = 5r/(20r-16)
                               %       b = 5r/(20r-12)
                 ,10/29,15/49  %       b = 5r/(20r-11)
                 ,      15/52  %       b = 5r/(20r-8)
                 ,10/33,15/53  %       b = 5r/(20r-7)
                               %       b = 5r/(20r-6)
                 ,      15/56  %       b = 5r/(20r-4)
                 ,10/37        %       b = 5r/(20r-3)
                               %       b = 5r/(20r-2)
                 ,10/39        %       b = 5r/(20r-1) 
    }{ \node[circle,inner sep=0.5pt,fill,red] at (5/4*6,1*\y*8) {};  }
  
  \draw[xscale=6,yscale=8,domain=1.6:0,smooth,variable=\x, green] plot ({\x},{\x/(1+\x)});
  \draw[xscale=6,yscale=8,domain=1.6:0,smooth,variable=\x, green] plot ({\x},{\x/(1+2*\x)});
  \draw[xscale=6,yscale=8,domain=1.6:0,smooth,variable=\x, green] plot ({\x},{\x/(1+3*\x)});
  \draw[xscale=6,yscale=8,domain=1.6:0,smooth,variable=\x, green] plot ({\x},{\x/(2+\x)});
  \draw[xscale=6,yscale=8,domain=1.6:0,smooth,variable=\x, green] plot ({\x},{\x/(3+\x)});
\end{tikzpicture}
\end{center}
\caption{Negative dilation solutions 
 $S'=-S$ viewed in $(\alphap, \betap)$-coordinates. Sporadic solutions appear as dots.}
%with $\uu= -1/\beta$ and $\vv= \frac{\alpha}{\beta}$. }
\label{fig:a-b-prime-coord}
\end{figure} 

Figure \ref{fig:a-b-prime-coord} exhibits
two kinds of solutions having no analogue in the
positive dilation case.
The first consists of  vertical line segments 
(of variable finite length) for all rational values of $\alpha$, 
given by case $(ii)$ of Theorem   \ref{thm:negative}. 
%These $\alpha$-values are dense on the 
%negative real axis.
The second consists of  a countable set of %$0$-dimensional components, 
isolated points having rational coordinates, which we term ``sporadic rational solutions.''
They appear as dots above the vertical line segments in Figure \ref{fig:a-b-prime-coord}.
They form  a strict subset of  case $(iii^{\ast})$;
determining this subset involves removing from case $(iii^{\ast})$  
the overlap  shared with case $(i)$ and  $(ii)$  solutions, 
and removing further redundancy from the parametrization in $(iii^*)$.
This determination 
%problem of uniquely listing all sporadic rational solutions 
is related to 
the Diophantine Frobenius problem in the parameters $(p,q)$, see \cite{RA05}. 
We treat this problem elsewhere  (\cite{LagR:2019c}).

%We note that commutators  of dilated floor functions are always bounded step functions, which need not be  periodic.
%They  are examples of  bounded generalized polynomials in the sense of Bergelson and Leibman \cite{BerL07}.
%Dilated  floor functions combined with translation satisfy many non-obvious
%identities, see Graham, Knuth and Patashnik for \cite[Chap. 3]{GKP94}. 

%%%%%%%%%%%%%%%%%%%
%
% SUBSECTION  1.2 Closure Property 
%
%%%%%%%%%%%%%%%%%%%

\subsection{The set  $S$ is closed} 

In Part~I we showed that the intersection of $S$ with each of the closed first, 
second and fourth quadrants of the $(\alpha, \beta)$-plane is closed. 
To complete the proof  that $S$ is closed, we establish in this paper that the  intersection 
with the  closed third quadrant is closed.

%%%%%%%%%%%%%%%%%%%
% Theorem 1.2 Closed subset
%%%%%%%%%%%%%%%%%%%
\begin{thm}
[Closed set property of $S$] % for nonpositive dilations]
\label{thm:closed}
The set of all pairs of dilation factors $(\alpha, \beta)$ 
with $\alpha \le 0, \beta \le 0$ satisfying  the nonnegative commutator relation 
${[f_\alpha, f_\beta](x) \geq 0 }$
is a closed subset of $\RR^2$.
%is closed.
\end{thm}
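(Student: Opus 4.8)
The plan is to argue by sequences. Suppose $(\alpha_k,\beta_k)\in S$ with $\alpha_k\le 0$, $\beta_k\le 0$ and $(\alpha_k,\beta_k)\to(\alpha_\infty,\beta_\infty)$; we must show $(\alpha_\infty,\beta_\infty)\in S$, noting that automatically $\alpha_\infty\le 0$ and $\beta_\infty\le 0$. If one limiting coordinate vanishes, say $\alpha_\infty=0$, then $f_{\alpha_\infty}(x)=\floor{0}=0$ identically, so both iterated compositions defining $[f_{\alpha_\infty},f_{\beta_\infty}]$ vanish identically and $(\alpha_\infty,\beta_\infty)\in S$; likewise if $\beta_\infty=0$. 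So we may assume $\alpha_\infty<0$ and $\beta_\infty<0$, and after discarding finitely many terms, $\alpha_k<0$ and $\beta_k<0$ for every $k$. By Theorem~\ref{thm:negative} each $(\alpha_k,\beta_k)$ satisfies at least one of $(i)$, $(ii)$, $(iii^{\ast})$, and after passing to a subsequence we may assume they all satisfy the same one of these three conditions. The task then becomes: in each case, to show that the governing integer parameters can, after a further subsequence, be taken so that $(\alpha_\infty,\beta_\infty)$ again satisfies one of $(i)$, $(ii)$, $(iii^{\ast})$.

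Cases $(ii)$ and $(i)$ are handled by a compactness argument on the parameters. If all $(\alpha_k,\beta_k)$ satisfy $(ii)$, with $\alpha_k=-q_k/p_k$ and $-1/p_k\le\beta_k<0$, then $p_k$ must be bounded (otherwise $\beta_k\to 0$, contradicting $\beta_\infty<0$), so a subsequence has $p_k=p$ constant; then $q_k=-p\alpha_k$ converges through integers, hence $q_k=q$ is eventually constant with $\gcd(p,q)=1$, and $(\alpha_\infty,\beta_\infty)=(-q/p,\beta_\infty)$ with $\beta_\infty\in[-1/p,0)$ satisfies $(ii)$. If all $(\alpha_k,\beta_k)$ satisfy $(i)$, then \eqref{quad3-1} rearranges to $n_k-m_k\alpha_k=\alpha_k/\beta_k$, whose right side tends to $\alpha_\infty/\beta_\infty\in(0,\infty)$; since $n_k\ge 1$ and $-m_k\alpha_k=m_k\abs{\alpha_k}\ge 0$ are therefore both bounded, and $\abs{\alpha_k}\to\abs{\alpha_\infty}>0$, both $n_k$ and $m_k$ are bounded, so a subsequence has $m_k=m\ge 0$, $n_k=n\ge 1$ constant, and the limit satisfies $m\alpha_\infty\beta_\infty-n\beta_\infty=-\alpha_\infty$, i.e.\ case $(i)$.

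Case $(iii^{\ast})$ is the main obstacle. Setting $t_k:=m_kq_k+n_kp_k-p_kq_k\in\ZZ$, equation \eqref{quad3-3} becomes $\beta_k=\frac{-q_kr_k}{p_kq_kr_k+t_k}$, and $m_k\ge 0$, $n_k\ge 1$ give the one-sided bound $t_k\ge p_k-p_kq_k=-p_k(q_k-1)$. If $p_k$ is bounded, pass to a subsequence with $p_k=p$, $q_k=q$ constant and coprime (using $\alpha_k\to\alpha_\infty<0$); then $t_k\ge -p(q-1)$, so either $t_k\ge 0$ infinitely often — forcing those $\beta_k\in[-1/p,0)$ and hence $(\alpha_\infty,\beta_\infty)$ into case $(ii)$ — or $t_k<0$ for all large $k$, which confines $t_k$ to a finite set of integers and hence to a constant value along a subsequence; a further subsequence then has $r_k$ either constant (so $\beta_k$ is constant and $(\alpha_\infty,\beta_\infty)$ lies in $(iii^{\ast})$) or $r_k\to\infty$ (so $\beta_k\to -1/p$ and $(\alpha_\infty,\beta_\infty)$ lies in $(ii)$). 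If instead $p_k\to\infty$ along a subsequence, then $q_k\to\infty$ too since $q_k/p_k=\abs{\alpha_k}\to\abs{\alpha_\infty}>0$; a direct computation gives $\alpha_k/\beta_k=q_k+\frac{t_k}{p_kr_k}$, which together with $t_k\ge -p_k(q_k-1)$ forces $\alpha_k/\beta_k\ge q_k(1-\tfrac1{r_k})$. Since the left side converges to the finite number $\alpha_\infty/\beta_\infty$ while $q_k\to\infty$, we conclude $r_k=1$ for all large $k$; but with $r_k=1$ the relation \eqref{quad3-3} collapses to \eqref{quad3-1} with the same $m_k,n_k$, so the tail of the sequence lies in case $(i)$, which has already been treated. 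This exhausts all cases and proves that the third-quadrant part of $S$ is closed.

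The essential difficulty is exactly this last configuration inside case $(iii^{\ast})$: the denominators $p_k,q_k$ and the auxiliary parameters $m_k,n_k,r_k$ may all diverge while $(\alpha_k,\beta_k)$ stays in the interior of the third quadrant and converges. The way out is to combine the ratio $\alpha_k/\beta_k$ with the one-sided Diophantine inequalities $m_k\ge 0$, $n_k\ge 1$ to pin down $r_k=1$, thereby reducing $(iii^{\ast})$ to $(i)$; this is consistent with the remark following Theorem~\ref{thm:negative} that all members of $(iii^{\ast})$ with $r=1$ already appear in case $(i)$.
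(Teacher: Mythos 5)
Your proof is correct. It takes the same broad strategy as the paper --- apply the classification Theorem~\ref{thm:negative} to a convergent sequence in $S$ and exploit compactness of the Diophantine parameters --- but organizes the case analysis differently and fills in several steps the paper leaves to the reader. The paper isolates $\Si$ and $\Sii$ (asserting their closedness without proof), and for $\Siii \smallsetminus \Si$ relies on two claims: Claim~1 observes that $r_i\ge 2$ forces $\beta_i > -2/p_i$, so a sequence in $\Siii\smallsetminus\Si$ with limit in the open third quadrant must have bounded $p_i$ and hence eventually constant $\alpha_i$; Claim~2 (again asserted) shows the remaining limit points land in $\Sii$. Your argument reaches the same endpoint through the contrapositive: when $p_k\to\infty$, the normalization $t_k = m_kq_k + n_kp_k - p_kq_k \ge -p_k(q_k-1)$ together with $\alpha_k/\beta_k = q_k + t_k/(r_kp_k)$ pins $r_k=1$, reducing $(iii^*)$ to $(i)$. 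Your route is longer but more self-contained: you explicitly treat the boundary $\alpha_\infty = 0$ or $\beta_\infty = 0$ (which the paper silently defers to Part~I's quadrant results), explicitly verify closedness of the case~$(i)$ and~$(ii)$ loci, and give the sub-case dichotomy ($t_k \ge 0$ versus $t_k<0$, $r_k$ bounded versus unbounded) that proves the paper's Claim~2. The paper's version is cleaner because it uses the lower bound $\beta_i > -2/p_i$ directly, trading generality for brevity; yours is more robust since it would survive even if that estimate failed, relying instead on the asymptotics of $\alpha_k/\beta_k$.
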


\noindent 
%{As remarked in Part~I, 
%The  closed set property seems not obvious 
%because 
%since 
The function 
$[f_\alpha, f_\beta](x) = \floor{\alpha\floor{\beta x}} - \floor{\beta\floor{\alpha x}}$ 
is  discontinuous in each of the variables $\alpha, \beta, x$,
so the closure property is not obvious.
% the closed set property seems not obvious.
We establish this result in Section \ref{sec:consequences} 
using the  classification of the  set $S$  in Theorem \ref{thm:negative}.

%%%%%%%%%%%%%%%%%%%%%%%%%%%%%%%%%%
%
% SUBSECTION  1.4    Symmetries of S on positive and negative
%
%%%%%%%%%%%%%%%%%%%%%%%%%%%%%%%%%%

\subsection{Symmetries of parameter set  $S$: negative dilations} 
\label{subsec:symm-neg}
%In the process of proving the classification theorem we 
We establish several symmetries of the set $S$ in the negative  dilation case.
There are a set of linear symmetries paralleling the positive dilation case. 

%%%%%%%%%%%%%%%%%%%
%
% Theorem 1.4
%  
%
%%%%%%%%%%%%%%%%%%%
\begin{thm}[Symmetries of $S$: negative dilations]
\label{thm:symmetries-n}
For negative dilations $\alpha, \beta < 0$, 
the  set $S$ is mapped into itself 
under the following symmetries:
\begin{enumerate}[(i)]
\item For any integer $m \ge 1$, if $(\alpha, \beta) \in S$, then $(m\alpha, \beta) \in S$.
\item For any integer $m \ge 1$, if $(\alpha, \beta) \in S$, then 
$(\frac{1}{m}\alpha, \frac{1}{m}\beta) \in S$.
\end{enumerate}
\end{thm}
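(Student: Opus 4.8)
The plan is to prove the two symmetries directly from the defining inequality, handling each in turn, and ideally without invoking the full classification of Theorem~\ref{thm:negative} (although one could also verify closure of each condition (i)--(iii$^\ast$) under the maps, that route is messier because of the redundant parametrization).

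For part (i), suppose $(\alpha,\beta)\in S$, i.e.\ $\floor{\alpha\floor{\beta x}}\ge \floor{\beta\floor{\alpha x}}$ for all $x$, and let $m\ge 1$ be an integer. We must show $\floor{m\alpha\floor{\beta x}}\ge \floor{\beta\floor{m\alpha x}}$ for all $x$. The left side equals $m\floor{\alpha\floor{\beta x}}$ since $\floor{\beta x}\in\ZZ$ and $m\in\ZZ$ (using $\floor{mk}=mk$ and, more to the point, $\floor{m\alpha N}=?$---actually here we only get $\floor{m(\alpha N)}\ge m\floor{\alpha N}$ in general, with equality not automatic). The cleaner observation is: for any real $t$ and integer $m\ge 1$, $\floor{mt}\ge m\floor{t}$, while for the inner composition $\floor{m\alpha x}\le m\floor{\alpha x}+ (m-1)$, and since $\beta<0$, applying $f_\beta$ reverses the inequality favorably: $\floor{\beta\floor{m\alpha x}}\le \floor{\beta\bigl(m\floor{\alpha x}\bigr)}$ is the wrong direction. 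So I would instead write $\floor{m\alpha x} = m\floor{\alpha x} - j$ for some integer $j\in\{0,1,\dots,m-1\}$ depending on $x$, giving $\floor{\beta\floor{m\alpha x}} = \floor{m\beta\floor{\alpha x} - j\beta}$, and compare with $\floor{m\alpha\floor{\beta x}} = m\floor{\alpha\floor{\beta x}}+\floor{\{\alpha\floor{\beta x}\}\cdot m}$. The hypothesis gives $\floor{\alpha\floor{\beta x}}\ge\floor{\beta\floor{\alpha x}}$; multiplying by $m$ and chasing the fractional-part corrections, one reduces to an elementary inequality among floors of the form $\floor{mA}\ge\floor{mB - c}$ whenever $\floor{A}\ge\floor{B}$ and $0\le c<|\beta|$ with $|\beta|$ itself bounded appropriately---this is where a small case analysis on whether $\beta\le -1$ or $-1<\beta<0$ may be needed.

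For part (ii), suppose $(\alpha,\beta)\in S$ and let $m\ge 1$. We must show $\floor{\tfrac{\alpha}{m}\floor{\tfrac{\beta}{m}x}}\ge \floor{\tfrac{\beta}{m}\floor{\tfrac{\alpha}{m}x}}$ for all $x$. The natural move is the substitution $x\mapsto mx$: the relation $[f_{\alpha/m},f_{\beta/m}](mx)\ge 0$ is what we want (for all $x$, equivalently for all real arguments). Now $\floor{\tfrac{\alpha}{m}\floor{\tfrac{\beta}{m}\cdot mx}} = \floor{\tfrac{\alpha}{m}\floor{\beta x}}$, and writing $N=\floor{\beta x}\in\ZZ$, we have $\floor{\tfrac{\alpha}{m}N} = \floor{\tfrac{\alpha N}{m}}$; the key sublemma is that for $N\in\ZZ$ and integer $m\ge1$, $\floor{\tfrac{\alpha N}{m}}$ and $\floor{\alpha N}$ are related by $\floor{\tfrac{1}{m}\floor{\alpha N}} \le \floor{\tfrac{\alpha N}{m}} \le \floor{\tfrac{1}{m}\floor{\alpha N}} + 1$, or more precisely $\floor{\tfrac{\alpha N}{m}}=\floor{\tfrac{1}{m}\floor{\alpha N}}$ exactly (this identity $\floor{t/m}=\floor{\floor{t}/m}$ for integer $m$ is the standard nested-floor identity, valid for all real $t$). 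Applying it on both sides, the target inequality becomes $\floor{\tfrac{1}{m}\floor{\alpha\floor{\beta x}}}\ge \floor{\tfrac{1}{m}\floor{\beta\floor{\alpha x}}}$, which follows immediately from the monotonicity of $t\mapsto\floor{t/m}$ together with the hypothesis $\floor{\alpha\floor{\beta x}}\ge\floor{\beta\floor{\alpha x}}$. So part (ii) is essentially a one-line consequence of the nested-floor identity $\floor{\floor{t}/m}=\floor{t/m}$ and monotonicity.

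The main obstacle is part (i): unlike (ii), the map $(\alpha,\beta)\mapsto(m\alpha,\beta)$ does not respect a clean nesting identity, and the sign of $\beta$ makes the inner substitution $\floor{m\alpha x}$ versus $m\floor{\alpha x}$ push the inequality in an unhelpful direction, so one must carefully track the integer defect $j=m\floor{\alpha x}-\floor{m\alpha x}\in\{0,\dots,m-1\}$ and argue that the loss it causes in $\floor{\beta\floor{m\alpha x}}$ is compensated by the gain $\floor{m\alpha\floor{\beta x}}\ge m\floor{\alpha\floor{\beta x}}$ on the other side. I expect this to require splitting into the cases $\beta\le-1$ and $-1<\beta<0$ and invoking, in the latter range, the extra structure that $(\alpha,\beta)\in S$ imposes (for instance that $\alpha$ must then be rational of a controlled shape, per condition (ii) of Theorem~\ref{thm:negative}), so a hybrid argument partly using the classification may in the end be the most economical way to close part (i).
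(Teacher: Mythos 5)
Your part (ii) is correct and is essentially the paper's argument in disguise: the nested-floor identity $\floor{t/m}=\floor{\floor{t}/m}$ for integer $m\ge 1$ is precisely the mechanism underlying the strict-rounding-function criterion (Proposition~\ref{prop:rounding-neg}) and the ordering Proposition~\ref{prop:strict-rounding-relations}(b). The substitution $x\mapsto mx$ followed by the nested-floor identity on both sides is a clean way to express it.

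Part (i), however, has a genuine gap, and your own account of it is already the diagnosis. A small sign slip first: the defect satisfies $\floor{m\alpha x}=m\floor{\alpha x}+j$ with $j\in\{0,\dots,m-1\}$, not $-j$. With the correct sign, $\beta<0$ gives $\floor{\beta\floor{m\alpha x}}=\floor{m\beta\floor{\alpha x}+j\beta}\le\floor{m\beta\floor{\alpha x}}$, while the other side gives $\floor{m\alpha\floor{\beta x}}\ge m\floor{\alpha\floor{\beta x}}\ge m\floor{\beta\floor{\alpha x}}$ by hypothesis. But $\floor{m\beta\floor{\alpha x}}$ can exceed $m\floor{\beta\floor{\alpha x}}$ by as much as $m-1$, so the two chains do not meet, and no case split on $\beta\le -1$ versus $-1<\beta<0$ will close this pointwise. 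The deeper problem is that the gain $\floor{mA}-m\floor{A}\ge 0$ and the loss created by the defect $j$ occur at unrelated arguments and need not cancel for any fixed $x$. Your fallback---invoking the classification Theorem~\ref{thm:negative}---is circular in the paper's architecture, because Theorem~\ref{thm:symmetries-n} (restated as Theorem~\ref{thm:symmetries-uv}) is an input to the sufficiency proof of Theorem~\ref{thm:negative} in Section~\ref{sec:negative-S}.

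The missing idea is the separated criterion of Proposition~\ref{prop:rounding-neg}: $[f_{-\alphap},f_{-\betap}]\ge 0$ is equivalent to $\sfloor{n}_{\alphap}\le\sfloor{n}_{\betap}$ for all $n\in\ZZ$, which decouples the two dilations. Once in this form, part (i) is immediate from Proposition~\ref{prop:strict-rounding-relations}(a): since $m\alphap$ is an integer multiple of $\alphap$, every multiple of $m\alphap$ is a multiple of $\alphap$, so $\sfloor{n}_{m\alphap}\le\sfloor{n}_{\alphap}\le\sfloor{n}_{\betap}$ for all $n$. This is exactly the step the direct floor-chasing cannot reproduce. (Part (ii) also follows from the same criterion via $\sfloor{n}_{\alphap/m}=\tfrac1m\sfloor{mn}_{\alphap}$, which is the rounding-function form of your nested-floor identity.)
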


An important new feature of the  negative dilation case is the existence of
additional symmetries having no positive dilation analogue. 
These symmetries act on vertical lines 
% $S_{\alpha}= \{ (\alpha, \beta)  \in S\}$ restricted to
with fixed rational  $\alpha$-coordinate.
When $\alpha= -\frac{q}{p}$ (in lowest terms),  
the symmetries act on the $\beta$-coordinate  as special cases of 
the family of linear fractional transformations
 \begin{equation*}
\Phi_{p}^r(\beta) := \frac{r \beta}{ (1-r)p \beta +1}  
\end{equation*}
defined for all real $r>0$, which depend on $p$ and not on  $q$.
%We use only integer parameters $r>0$.

%%%%%%%%%%%%%%%
% Theorem 1.5
%%%%%%%%%%%%%%%
\begin{thm}[Linear fractional symmetries]
\label{thm:lin-frac-sym}
Let  $\alpha = -\frac{q}{p}$ be a fixed negative rational. 
For each integer $r \ge 1$, 
the  linear fractional transformation
\begin{equation*}
% \label{eqn:lin-frac}
\Phi_{p}^r(\beta) := \frac{r \beta}{ (1-r)p \beta +1}  
\end{equation*}
maps the  set of  $\beta<0$ which satisfy 
$[f_{-q/p}, f_{\beta}] \geq 0$
into itself. 
Namely,
if $(-\frac{q}{p}, \beta) \in S$
then $(-\frac{q}{p}, \Phi_p^r(\beta)) \in S$.
\end{thm}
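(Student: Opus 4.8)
## Proof strategy for Theorem~\ref{thm:lin-frac-sym}

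\textbf{Reduction to the rational case via density.} The plan is to first prove the statement when $\beta$ is rational and then pass to the general case by a limiting argument. For the rational case, write $\beta = -\frac{b}{a}$ in lowest terms with $a, b \ge 1$; the hypothesis $(-\frac{q}{p}, \beta) \in S$ then puts $(p,q,a,b)$ into one of the three families $(i)$, $(ii)$, $(iii^*)$ of Theorem~\ref{thm:negative}. A direct computation shows $\Phi_p^r(\beta)$ is again a negative rational (with denominator dividing a predictable quantity built from $a$, $b$, $p$, $r$), so one can check membership in $S$ by verifying that the image again lands in one of the three families. The key algebraic observation to exploit is that $\Phi_p^r$ is, up to the rescaling $\beta \mapsto p\beta$, essentially the map $t \mapsto \frac{rt}{(1-r)t+1}$, which is conjugate to the dilation $t \mapsto r^{-1}t$ near $0$; in particular $\Phi_p^r \circ \Phi_p^s = \Phi_p^{rs}$, so the family $\{\Phi_p^r : r \ge 1\}$ is closed under composition and it suffices to handle the generators, i.e.\ to show stability under $\Phi_p^r$ for each fixed $r$ by relating it to the already-established symmetries.

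\textbf{Matching $\Phi_p^r$ against the parametrization of $(iii^*)$.} The cleanest route is to observe that case $(iii^*)$ of Theorem~\ref{thm:negative}, with $\alpha = -q/p$ fixed, writes
\[
\beta = -\frac{1}{p}\Bigl(1 + \tfrac{1}{r}\bigl(\tfrac{m}{p} + \tfrac{n}{q} - 1\bigr)\Bigr)^{-1},
\]
which is exactly $\Phi_p^r$ applied to the ``base'' value $\beta_0 = -\frac{1}{p}(1 + \frac{m}{p} + \frac{n}{q} - 1)^{-1} = -\frac{1}{p}(\frac{m}{p}+\frac{n}{q})^{-1}$ — i.e.\ the $r = 1$ member of the family. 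So I would prove: (a) whenever $(-q/p,\beta)\in S$ with $\beta$ rational, $\beta$ can be written in the form $\Phi_p^{r_0}(\beta_0)$ where $\beta_0$ is a case-$(i)$ solution and $r_0 \ge 1$ is an integer, possibly after absorbing case-$(ii)$ solutions (the vertical segments $-\frac1p \le \beta < 0$) into this picture as the limiting/degenerate members; and (b) $\Phi_p^r(\Phi_p^{r_0}(\beta_0)) = \Phi_p^{rr_0}(\beta_0)$ is then again of the required form with integer exponent $rr_0 \ge 1$, hence again in $S$ by Theorem~\ref{thm:negative}. The bookkeeping must track that the integers $m \ge 0$, $n \ge 1$ defining $\beta_0$ are unchanged by the composition and only the $r$-parameter scales, which is where the specific shape of $\Phi_p^r$ (depending on $p$ but not $q$) is essential.

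\textbf{The limiting argument for irrational $\beta$.} Once the rational case is done, fix an irrational $\beta < 0$ with $(-q/p,\beta)\in S$. Since $\Phi_p^r$ is a homeomorphism of $(-\infty,0)$ onto an interval of negative reals (one checks the denominator $(1-r)p\beta + 1$ does not vanish for $\beta < 0$ when $r \ge 1$, as then $(1-r)p\beta \ge 0$), and since $S \cap (\{-q/p\}\times(-\infty,0))$ is closed by Theorem~\ref{thm:closed}, I would approximate $\beta$ by a sequence of rationals $\beta_k \to \beta$ with $(-q/p,\beta_k)\in S$ — such a sequence exists because the rational points of $S$ on this vertical line are dense in $S$ on this line (the vertical segments of case $(ii)$ together with the sporadic points accumulate appropriately; alternatively invoke closedness to reduce to showing density, which follows from the explicit families). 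Then $\Phi_p^r(\beta_k)\in S$ for each $k$ by the rational case, $\Phi_p^r(\beta_k)\to\Phi_p^r(\beta)$ by continuity, and closedness of $S$ gives $\Phi_p^r(\beta)\in S$.

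\textbf{Expected main obstacle.} The routine part is the algebra identifying $\Phi_p^r$ with the $r$-shift in the $(iii^*)$-parametrization and verifying the composition law. The genuine difficulty is step (a): showing that \emph{every} rational solution $(-q/p,\beta)\in S$ — including those arising from case $(i)$ or buried in a vertical segment of case $(ii)$ — can be exhibited as $\Phi_p^{r_0}(\beta_0)$ for a case-$(i)$ base point $\beta_0$ and integer $r_0\ge 1$, so that applying $\Phi_p^r$ keeps us inside the union of the three families rather than escaping it. This requires carefully reconciling the overlaps among cases $(i)$, $(ii)$, $(iii^*)$ described in the remarks following Theorem~\ref{thm:negative} (namely that $r=1$ members of $(iii^*)$ lie in $(i)$, and members with $\frac{m}{p}+\frac{n}{q}\ge 1$ lie in $(ii)$), and it is exactly here that the explicit Diophantine structure of $S$ — and not merely its closedness — must be used. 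An alternative that may sidestep some of this is to prove the inequality $[f_{-q/p}, f_{\Phi_p^r(\beta)}]\ge 0$ directly from $[f_{-q/p}, f_\beta]\ge 0$ by a pointwise argument on $x$, tracking how $\ceil{\beta x}$ transforms under $\beta \mapsto \Phi_p^r(\beta)$; but I expect the parametrization-matching route above to be shorter given that Theorem~\ref{thm:negative} is already available.
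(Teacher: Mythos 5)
Your proposal is logically circular, and this is not a small foundational worry but a fatal one given the architecture of the paper. Theorem~\ref{thm:lin-frac-sym} is the \emph{mechanism} by which the paper establishes sufficiency of the case-$(iii^*)$ family in Theorem~\ref{thm:negative} (see the proof of Theorem~\ref{thm:neg-sufficient}, Case (iii), which invokes Theorem~\ref{thm:lin-frac-sym-uv} directly), and Theorem~\ref{thm:closed} in turn rests on Theorem~\ref{thm:negative}. Your rational case uses the sufficiency direction of Theorem~\ref{thm:negative} (``one can check membership in $S$ by verifying that the image again lands in one of the three families'' and ``hence again in $S$ by Theorem~\ref{thm:negative}''), and your irrational case invokes Theorem~\ref{thm:closed} to close the limiting argument. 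Both are downstream of the theorem you are trying to prove, so the argument as stated proves nothing. You cannot prove that $\Phi_p^r$ preserves $S$ by appealing to a classification of $S$ whose proof already required knowing $\Phi_p^r$ preserves $S$.

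The paper avoids this by working at a lower level: after a change to $(\uu,\vv)$-coordinates (where $\Phi_p^r$ becomes the affine map $\Psi_p^r(\uu) = p + \tfrac{1}{r}(\uu - p)$), the nonnegative-commutator condition is re-expressed as disjointness of a rectangular lattice $\Lambda_{\uu,\vv}$ from the region $\cDprime$ (Proposition~\ref{prop:diagonal-neg}), and Lemma~\ref{lem:hyperbola-neg-sporadic} proves \emph{directly by lattice manipulations} that this disjointness is preserved under the $\uu \mapsto \Psi_p^r(\uu)$ rescaling, treating rational $\ell$ (part (a), using Lemma~\ref{lem:diag-expansion}) and irrational $\ell$ (part (b), by a closure-of-subgroup argument internal to that lemma) separately. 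None of those lemmas depend on Theorem~\ref{thm:negative}. Your instinct to split into rational and irrational subcases is on target, and your observation that $\Phi_p^r$ acts as an $r$-shift on the $(iii^*)$-parameterization correctly describes how the theorem is eventually \emph{used}; but the obstacle you flagged at the end (``the genuine difficulty is step (a)'') is the wrong one — the real issue is that the entire parametrization-matching route is unavailable at this stage of the paper's logical development, and the ``alternative'' direct route you dismissed is essentially what the paper actually does.
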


%The linear fractional transformation $\Phi_p^r$ can represented 
%in one higher dimension by the matrix
%$\begin{pmatrix}
%r & 0\\ (1-r)p & 1
%\end{pmatrix}$
%of determinant $r$..

The set of  transformations $\Phi_{p}^r$ 
with $p$ fixed and integer  $r\geq 1$ forms a commutative semigroup
under composition of functions:
\begin{equation*} 
\Phi_{p}^{r} \circ \Phi_{p}^{s} = \Phi_{p}^{rs} .
\end{equation*}
All elements of this semigroup have two common  fixed points  $z=0$ and $z= -\frac{1}{p}$. 
When  $r= 1$ the  map  is the identity.
\begin{enumerate}
\item
The map $\Phi_{p}^r$ is a  homeomorphism of the open interval $( -\frac{1}{p},0) \to ( -\frac{1}{p},0)$.
For $\alpha= -\frac{q}{p}$ this  interval   
is exactly  the $\beta$ values in Case $(ii)$.
% solutions, which are permuted under these actions.
%continuous family  Case (ii) solutions having $\alpha= -\frac{q}{p}$. 
\item 
The map $\Phi_{p}^r$ 
%give  homeomorphisms of 
sends the open interval  $(-\infty, -\frac{1}{p})$ % \to (-\infty,  -\frac{1}{p})$;
to 
%for $r \geq 2$  the range is 
$(-\frac{r}{(r-1)p}, -\frac{1}{p})$, for $r\geq 2$.
%the map $\Phi_{p}^r$ sends the interval $(-\infty, 0)$ to the interval $(-\frac{r}{(r-1)p},0)$.
For  $\alpha= -\frac{q}{p}$ the  case $(iii^{\ast})$ solutions with $\beta\in (-\infty, -\frac{1}{p} )$
are the ones with  $0 < \frac{m}{p} + \frac{n}{q} <1$, and are a discrete set. 
The action of $\Phi_{p}^r$ on these solutions is injective, but for $r\ge 2$ usually not surjective. 
\end{enumerate}

Theorem \ref{thm:lin-frac-sym} is the mechanism used  in this paper to 
show existence of  all the 
% constructing all the
case $(iii^{\ast})$  solutions. 
We start from the case~$(i)$ rational solutions $(-\frac{q}{p}, \beta)$
having $\beta < -\frac{1}{p}$; these solutions 
%depend on parameters $m,n$,
%and 
correspond to  $r=1$ in  the parameterization in case $(iii^{\ast})$.
From these solutions for fixed $\alpha= -\frac{q}{p}$ 
the full infinite set of sporadic rational solutions are obtained 
by applying the linear fractional maps $\Phi_{p}^r$ for $r\ge 2$ to  these solutions.
We prove Theorem \ref{thm:lin-frac-sym} in Section \ref{subsubsec:integer-contact-proof}.
%These monotonically decrease the value of $\betap= |\beta|$,
%see Section~\ref{subsec:expansion-contraction}. 

%%%%%%%%%%%%%%%%%%%%%%%%%%%%%%%%%%%%%%
%
% SECTION 2: Outline of proofs 
%
%%%%%%%%%%%%%%%%%%%%%%%%%%%%%%%%%%%%%%
\section{Outline of proofs}\label{subsec:24}

The negative dilation case analysis is similar 
to the positive dilation case, but is  more complicated.
Part I noted an analogy with  the classification  of disjoint
Beatty sequences; a similar connection appears here in 
Lemma \ref{lem:diagonal-vert-boundary}.
%The most important difference is the presence of extra symmetries
%An important difference is the proof of the s Theorem ~\ref{thm:lin-frac-symm}. 
% for rational $\alpha$. 

% SECTION 3
Section \ref{sec:rounding-S}  
introduces the family of  {\em strict rounding functions},
whose definition parallels that  of rounding functions in Part~I,
replacing the rescaled floor function with 
rescaled versions of the {\em strict floor function}.
Proposition  \ref{prop:rounding-neg} gives a criterion for
\eqref{eqn:ineq} to hold in terms of strict rounding functions.

%SECTION 4
Section~\ref{sec:negative-LFS} establishes 
that $S$ is closed under the linear fractional
transformations given in Theorem \ref{thm:lin-frac-sym}
(for rational $\alpha$).  These symmetries provide a mechanism to construct
all type $(iii^{\ast})$ solutions starting from type $(i)$ rational solutions.
This proof proceeds  in a new coordinate system, the $(\uu, \vv)$-coordinates, 
given by $\uu= - \frac{1}{\beta},\, \vv=\frac{\alpha}{\beta}$, 
which take positive values.

% SECTION 5
Section~\ref{sec:negative-S}  establishes the sufficiency direction of
Theorem~\ref{thm:negative}. 
It shows all parameters given in cases $(i), (ii), (iii^{\ast})$
of Theorem \ref{thm:negative} are solutions in $S$. 
It uses $(\uu, \vv)$-coordinates.

%SECTION 6
 Section \ref{sec:negative-N} establishes the necessity direction of
Theorem~\ref{thm:negative}. Its main result  
partitions all negative dilation solutions 
into  two irrational cases $(i^{\ast})$ and $(ii^{\ast})$  and the rational 
case $(iii^{\ast})$ which it shows 
enumerates all the rational points in $S$.
It uses $(\tbetap, \trhop)$-coordinates,
with $\tbetap= \frac{1}{\uu}$, $\trhop=\frac{1}{{\vv}}$. 

% SECTION 7
Section \ref{sec:consequences}  proves that the part of 
$S$  restricted to the region of nonpositive dilation factors is a closed set.
This result  completes the proof that $S$ is closed in  $\RR^2$,  stated in Part I \cite[Theorem 1.4]{LagR:2018a}.

%%%%%%%%%%%%%%%%%%%%%%%%%%%%%%%%%%
%
% SUBSECTION  2.2
%%%%%%%%%%%%%%%%%%%%%%%%%%%%%%%%%%

\subsection{Notational conventions} 
\label{subsec:notation}
(1) The moduli space parametrizing  dilations 
 has  coordinates $(\alpha, \beta)$.
 The proofs use different coordinate systems for this moduli space, starting with 
 the negation $(\alphap, \betap)$
of the  coordinate system $(\alpha, \beta)$. Thus $\pS= -S$ denotes the solution set $S$ in $(\alphap, \betap)$-coordinates.
In Section \ref{sec:negative-S} we use the $(\uu, \vv)$-coordinate systems
and in Section \ref{sec:negative-N} the
$(\tbetap, \trhop)$-coordinate system. 
These  coordinate systems take  positive real values.

(2) Variables  $x$ and $y$ are never moduli space coordinates.   
They are real-valued, and  are used  in   
 dilated floor functions, in commutators $[f_{\alpha}, f_{\beta}](x)$,
 and  in  lattices and quotient tori.

%%%%%%%%%%%%%%%%%%%%%%%%%%%%%%%%%%%%%%
%
% Section 3  strict rounding functions
%
%%%%%%%%%%%%%%%%%%%%%%%%%%%%%%%%%%%%%%
\section{Strict rounding functions}
\label{sec:rounding-S}

In Part~I, rounding  functions
$\floor{x}_\alpha := \alpha \bfloor{\frac{1}{\alpha} x} $ 
were used for analyzing the relation $[f_\alpha, f_\beta] \geq 0$
in the positive dilation case.
Here we use  modified  rounding  functions, obtained by
replacing the floor function with 
the {\em strict floor function} $\sfloor{x}$, which  
 returns the largest integer strictly smaller than $x$. 
Thus 
$ 
\sfloor{x} := \max\{n\in \ZZ : n < x\},
$
which also has
$ \sfloor{x} = \ceil{x}-1$.
%%%%%%%%%%%%%%%%%%%%%%%
% Defn  3.1
% Strict rounding functions
%%%%%%%%%%%%%%%%%%%%%%%
\begin{defi}\label{defi:strict-round}
Given  nonzero $\alpha$, 
let $\sfloor{x}_{\alpha}$ 
denote the {\em (lower) strict  rounding function}  defined by
\begin{equation*}
\sfloor{x}_{\alpha} := \alpha \sfloor{\frac{1}{\alpha} x} 
= \alpha \left(\bceil{\frac{1}{\alpha}x} - 1\right) ,
\end{equation*}
We additionally set $\sfloor{x}_0= x$.
%%%%%%%%%%%%%%%%%%%%%%%%%%%%%%%%%%%%
%Here $\sceil{x}_{\alpha}  = \sfloor{x}_{- \alpha}$. 
%%%%%%%%%%%%%%%%%%%%%%%%%%%%%%%%%%%% 

\end{defi}

We may also define the {\em strict ceiling function} $\sceil{x}= - \sfloor{-x} =  \bfloor{x} +1$
and the 
 {\em (upper) strict  rounding function} defined by
$\sceil{x}_{\alpha} := \alpha \sceil{\frac{1}{\alpha} x}$.
These definitions yield a  single family of 
strict rounding functions, because
\begin{equation}\label{eqn:strict-flip}
\sceil{x}_{\alpha} = \sfloor{x}_{-\alpha} 
\end{equation}
for all $\alpha \ne 0$.  
The continuous function   $\sfloor{x}_0= \sceil{x}_0=x$ 
arises as the pointwise
limit of the functions $\sfloor{x}_{\alpha}$ as $\alpha \to 0$.

%%%%%%%%%%%%%%%%%%%%%%%%%
% Subsection 3.1 Strict rounding:ORDERING
%%%%%%%%%%%%%%%%%:%%%%%%%%
\subsection{Strict rounding functions: ordering inequalities} 
\label{subsec:strict}

We  classify when the graph of one
strict rounding function  lies (weakly) below the other.

%%%%%%%%%%%%%%%%%%%
% Proposition 3.2 Strict Rounding functions
%%%%%%%%%%%%%%%%%%%
\begin{prop}[Strict  Rounding Function: Ordering Inequalities]
\label{prop:strict-rounding-relations}
Given positive $\alpha, \beta$,  the strict rounding functions satisfy the inequalities
\begin{enumerate}[(a)]
\item   $\sfloor{x}_{\alpha} \leq \sfloor{x}_{\beta}$ holds for all $x\in\RR$ if and only if 
$\alpha= m \beta$ for some integer $m\geq 1$.

\item 
$\sceil{x}_{\alpha} \leq \sceil{x}_{\beta}$  holds for all $x\in\RR$ if and only if 
$\beta = m\alpha$ for some integer $m\geq 1$.
\end{enumerate}
\end{prop}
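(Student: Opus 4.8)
The plan is to prove part (a) directly, since part (b) will then follow immediately from the identity $\sceil{x}_\alpha = \sfloor{x}_{-\alpha}$ in \eqref{eqn:strict-flip}: the inequality $\sceil{x}_\alpha \le \sceil{x}_\beta$ is $\sfloor{x}_{-\alpha} \le \sfloor{x}_{-\beta}$, and applying (a) with the roles of the two parameters governed by their absolute values yields $-\beta = m(-\alpha)$, i.e. $\beta = m\alpha$. (One must be slightly careful that (a) is phrased for positive parameters; since $-\alpha, -\beta < 0$, I will either state and prove the companion statement for negative parameters, or observe that $\sfloor{x}_{\alpha}$ for $\alpha < 0$ equals $\alpha \sfloor{x/\alpha}$ and reduce to the positive case by a sign change of $x$.) So the heart of the matter is (a).

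For the sufficiency direction of (a): if $\alpha = m\beta$ with $m \ge 1$ an integer, then $\sfloor{x}_\alpha = m\beta\, \sfloor{x/(m\beta)}$. Writing $t = x/\beta$, I want $m\beta \sfloor{t/m} \le \beta \sfloor{t}$, i.e. (dividing by $\beta > 0$) $m\sfloor{t/m} \le \sfloor{t}$ for all real $t$. This is the standard nesting fact for the strict floor: $\sfloor{t/m} = \sfloor{\floor{t}/m}$ is not literally what I want, but $m \sfloor{t/m} \le t$ and $m\sfloor{t/m}$ is an integer $< t$ (it is $\le t - 1$ in fact when $m \ge 1$, or at least $< t$), hence $m\sfloor{t/m} \le \sfloor{t}$ by definition of $\sfloor{t}$ as the largest integer strictly below $t$. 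I should double-check the strictness bookkeeping, but this is routine.

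For the necessity direction: suppose $\sfloor{x}_\alpha \le \sfloor{x}_\beta$ for all $x$. The key idea is to examine the behavior near the jump points and at a suitably chosen positive $x$. Evaluating at $x$ slightly less than $\beta$ (where $\sfloor{x}_\beta$ has value $0$ and is about to jump) forces $\sfloor{x}_\alpha \le 0$ on $(0,\beta)$, which combined with looking at $x \downarrow 0^+$ and comparing slopes forces $\alpha \le \beta$... more precisely I would argue: the function $\sfloor{x}_\beta$ takes value $k\beta$ on the interval $(k\beta, (k+1)\beta]$; in particular $\sfloor{x}_\beta = 0$ for $x \in (0,\beta]$ and $\sfloor{x}_\beta \to 0$ as $x \downarrow 0$ while $\sfloor{x}_\beta$ has a jump of size $\beta$ at each positive integer multiple of $\beta$. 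The inequality at points just below $\beta$ gives $\sfloor{\beta^-}_\alpha \le 0$, and since the strict rounding function of $\alpha$ at $x$ just below $\beta$ equals $\alpha(\lceil \beta/\alpha\rceil - 1)$ which is $\ge 0$ only if... — this pins down that $\beta/\alpha \le 1$, forcing $\alpha \le \beta$ — wait, I need $\alpha = m\beta$ with $m \ge 1$, so actually $\alpha \ge \beta$. Let me reconsider: $\sfloor{x}_\alpha = \alpha(\lceil x/\alpha \rceil - 1)$, which for $x \in (0, \alpha]$ equals $0$, and for $x$ slightly above $\alpha$ equals $\alpha$. So $\sfloor{x}_\alpha$ first jumps (to the value $\alpha$) at $x = \alpha$, and $\sfloor{x}_\beta$ first jumps at $x = \beta$. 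For $\sfloor{x}_\alpha \le \sfloor{x}_\beta$ to hold just after $x = \alpha$ we need $\sfloor{x}_\beta \ge \alpha$ there, which requires $\beta \le \alpha$ (the first jump of the $\beta$-function must occur at or before $\alpha$) AND the value reached by the $\beta$-function at $x = \alpha^+$ must be $\ge \alpha$; since $\sfloor{\alpha^+}_\beta = \beta \lfloor \alpha/\beta \rfloor$ (when $\alpha/\beta \notin \ZZ$, need care), we get $\beta\lfloor \alpha/\beta\rfloor \ge \alpha$, forcing $\alpha/\beta \in \ZZ$, say $\alpha = m\beta$. Then one checks the remaining jump points are consistent, which they are by the sufficiency computation. \textbf{The main obstacle} I expect is the careful handling of the half-open versus open intervals and the strict-versus-nonstrict inequalities at the jump points — the strict floor function $\sfloor{\cdot}$ has its jumps located differently from the ordinary floor, so the "critical" test points must be chosen as one-sided limits or as the jump points themselves, and getting the direction of every inequality right (in particular ruling out $\alpha/\beta$ irrational and ruling out non-integer rational ratios like $\alpha/\beta = 3/2$) is where the real content sits. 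A clean way to organize this is to note that $\sfloor{x}_\alpha$ and $\sfloor{x}_\beta$ are both right-continuous step functions that vanish on a neighborhood of $0^+$, and to compare their sets of jump discontinuities $\{m\alpha : m \ge 1\}$ and $\{m\beta : m \ge 1\}$ together with the sizes $\alpha$ and $\beta$ of those jumps; the inequality forces the $\alpha$-jump-set to be a subset of the $\beta$-jump-set with jump sizes comparing appropriately, and a short argument then yields $\alpha = m\beta$.
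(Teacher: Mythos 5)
Your proof is correct, but it follows a genuinely different route from the paper's. The paper's proof is a one-paragraph reduction to the corresponding non-strict rounding statement in Part~I (\cite[Prop.~4.2]{LagR:2018a}): it observes that $\sfloor{x}_\alpha$ and $\floor{x}_\alpha$ agree on the interior of their common domain of continuity, that both families of functions are one-sided continuous (strict rounding is left-continuous, ordinary rounding is right-continuous), and that for one-sided continuous piecewise functions the inequality $f \le g$ holding on a dense open set is equivalent to it holding everywhere. This transfers the ordering classification from the non-strict case to the strict case in one stroke. You instead prove the statement from scratch. Your sufficiency argument (write $t = x/\beta$, observe $m\sfloor{t/m}$ is an integer strictly less than $t$, hence $\le \sfloor{t}$) is clean and correct. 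Your necessity argument, despite its stream-of-consciousness presentation, lands on the right computation: taking the one-sided limit of $\sfloor{x}_\beta$ as $x \downarrow \alpha$ gives $\beta\lfloor \alpha/\beta\rfloor$, and the inequality $\sfloor{x}_\alpha \le \sfloor{x}_\beta$ forces $\beta\lfloor\alpha/\beta\rfloor \ge \alpha$, i.e.\ $\lfloor\alpha/\beta\rfloor \ge \alpha/\beta$, which since $\lfloor y\rfloor \le y$ always pins down $\alpha/\beta \in \ZZ_{\ge1}$. (Your parenthetical worry that the formula $\sfloor{\alpha^+}_\beta = \beta\lfloor\alpha/\beta\rfloor$ needs special care when $\alpha/\beta \in \ZZ$ is unnecessary; it holds in that case too.) For part~(b), your planned route through $\sceil{x}_\alpha = \sfloor{x}_{-\alpha}$ is workable but a small detour: the identity $\sceil{x}_\alpha = -\sfloor{-x}_\alpha$ (keeping the dilation parameter positive) gives the cleaner reduction, since $\sceil{x}_\alpha \le \sceil{x}_\beta$ for all $x$ is then directly $\sfloor{y}_\beta \le \sfloor{y}_\alpha$ for all $y$ after substituting $y = -x$, to which part~(a) applies with the roles of $\alpha,\beta$ swapped. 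In sum: the paper's approach buys brevity and reuses Part~I; yours buys self-containment at the cost of re-deriving the jump analysis.
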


\begin{proof}
This result parallels
\cite[Prop. 4.2]{LagR:2018a},
 making  use of the following general observation: if functions $f(x)$ and $g(x)$ are 
both piecewise continuous and left-continuous (resp. both right-continuous), then
$f \leq g$ holds identically
 if and only if it holds on the interior of the domain of continuity of $f$ and $g$.
Here  the  strict floor function $\sfloor{x}_{ \alpha}$ (resp. $\sceil{x}_{ \alpha}$) agrees with 
$\floor{x}_{\alpha}$ (resp. $\ceil{x}_{\alpha}$) on the interior of its domain of continuity.
\end{proof}

%%%%%%%%%%%%%%%%%%%%%%%%%
%
% Subsection 3.2 Strict Rounding Function Criterion
%
%%%%%%%%%%%%%%%%%%%%%%%%%
\subsection{Strict rounding function criterion: negative dilations}
\label{sec:nge-rounding-crit}

We  give a (separated variable) criterion in terms of strict rounding functions  
equivalent to the nonnegative commutator condition $[f_\alpha,f_\beta]\geq 0$
on dilated floor functions with negative dilation factors. We state it in 
$(\alphap, \betap) := (-\alpha, -\beta)$ coordinates.

%%%%%%%%%%%%%%%%%%%
% Proposition 3.3 
%%%%%%%%%%%%%%%%%%%
\begin{prop}[Nonnegative commutator relation: strict rounding function criterion]
\label{prop:rounding-neg}
Given
${\alphap, \betap > 0}$, the following properties are equivalent.
 \begin{enumerate}
 \item[\textnormal{(R1')}]
 The nonnegative commutator relation
 $[f_{-\alphap}, f_{-\betap}]\geq 0$ holds.
 \item[\textnormal{(R2')}]
(Lower strict rounding function) There holds
\begin{equation}\label{rounding-negative}
\sfloor{n}_{\alphap} \leq \sfloor{n}_{\betap} \quad\text{for all }n\in \ZZ,
\end{equation}
where $\sfloor{x}_\alphap $
is the  strict rounding function with parameter $\alphap$. 
\end{enumerate}
\end{prop}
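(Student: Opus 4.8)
The plan is to reduce the two-sided inequality $[f_{-\alphap},f_{-\betap}]\geq0$ to a one-sided inequality over integers via the flip identity \eqref{eqn:strict-flip}. First I would rewrite the commutator using $\floor{-t}=-\ceil{t}$: for $\alphap,\betap>0$ we have $\floor{-\alphap\,\floor{-\betap x}} = \floor{-\alphap\,(-\ceil{\betap x})} = \floor{\alphap\ceil{\betap x}}$, and likewise $\floor{-\betap\,\floor{-\alphap x}} = \floor{\betap\ceil{\alphap x}}$. Thus (R1') is equivalent to $\floor{\alphap\ceil{\betap x}} \geq \floor{\betap\ceil{\alphap x}}$ for all real $x$, which is the reformulation already announced in the abstract. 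The next step is to observe that the function $\ceil{\betap x}$ takes only integer values, so if we substitute and let $m = \ceil{\betap x}$ and $\ell = \ceil{\alphap x}$ range over their joint image, the inequality only ever "sees" integer inputs to the outer floor functions. Concretely, writing out $\floor{\alphap\ceil{\betap x}}$ as $\alphap\ceil{\betap x} - \{\alphap\ceil{\betap x}\}$ is not quite the cleanest route; instead I would express things through the strict rounding functions directly.

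The key manipulation is the identity $\sfloor{n}_{\alphap} = \alphap\bigl(\ceil{\tfrac{n}{\alphap}} - 1\bigr)$ from Definition \ref{defi:strict-round}, valid for integer $n$. Here is where I expect the real content to lie: I want to show that, as $x$ ranges over $\RR$, the pair $\bigl(\ceil{\betap x}, \ceil{\alphap x}\bigr)$ and the pair $\bigl(\ceil{\tfrac{1}{\alphap}n}, \ldots\bigr)$ interact so that the two-variable inequality in $x$ collapses to a single-variable inequality in an integer $n$. The mechanism, paralleling Part~I's treatment of ordinary rounding functions, is: for a fixed integer $m$, the set of $x$ with $\ceil{\betap x} = m$ is a half-open interval, on which $\ceil{\alphap x}$ takes a range of consecutive integer values; the extreme (hardest) case of the inequality $\floor{\alphap m} \geq \floor{\betap\ceil{\alphap x}}$ over that interval occurs at an endpoint. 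Pushing this through, and using the flip \eqref{eqn:strict-flip} to convert $\ceil{\,\cdot\,}_\alphap$ statements into $\sfloor{\,\cdot\,}_\alphap$ statements, reduces (R1') to the assertion that $\sfloor{n}_{\alphap}\leq\sfloor{n}_{\betap}$ for all $n\in\ZZ$, which is exactly (R2').

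For the converse direction (R2') $\Rightarrow$ (R1') I would reverse this argument: given the integer inequality, reconstruct the inequality for all real $x$ by noting that every value $\floor{\alphap\ceil{\betap x}}$ appearing on the left is of the form $\floor{\alphap m}$ for an integer $m = \ceil{\betap x}$, and the integer hypothesis at $n = m$ (suitably rescaled) dominates the corresponding right-hand term. The main obstacle I anticipate is bookkeeping the \emph{strictness}: the whole reason strict floor/ceiling functions enter (rather than ordinary ones) is that the inner composition $\floor{-\betap x}$, when unwound, produces $\ceil{\betap x}$, and the correct rounding direction at integer points must be handled with the strict convention $\sfloor{x} = \ceil{x}-1$ to get the equivalence to come out exactly rather than off by one at the lattice points. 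I would lean on Proposition \ref{prop:strict-rounding-relations} and the left/right-continuity observation in its proof to justify that checking the inequality on integers (the discontinuity locus) suffices, and I would mirror the structure of the corresponding positive-dilation proposition in Part~I as closely as possible, flagging only the places where the strict convention changes the argument.
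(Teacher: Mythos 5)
Your plan identifies the right starting point (rewrite (R1') via $\floor{-t}=-\ceil{t}$ to get $\floor{\alphap\ceil{\betap x}} \geq \floor{\betap\ceil{\alphap x}}$, then reduce this real-variable inequality to an integer-variable one), but it misses the mechanism that makes the reduction land exactly on (R2'). The paper's argument computes the \emph{superlevel sets} of the composed functions: since $f_{-\alphap}\circ f_{-\betap}$ is non-decreasing and integer-valued, one has
\[
\{x : f_{-\alphap}\circ f_{-\betap}(x) \geq n\} = \bigl\{x : x > \tfrac{1}{\alphap\betap}\sfloor{n}_{\alphap}\bigr\},
\]
and likewise with $\alphap,\betap$ swapped for $f_{-\betap}\circ f_{-\alphap}$. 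Then $[f_{-\alphap},f_{-\betap}]\geq 0$ holds iff these half-lines are nested for every $n\in\ZZ$, which is exactly $\sfloor{n}_{\alphap}\leq\sfloor{n}_{\betap}$. The integer $n$ is the \emph{level} of the outer composition, so (R2') appears immediately in the correct parameterization and no further translation is needed.

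Your plan instead partitions $\RR$ by the \emph{inner} value $m=\ceil{\betap x}$ and takes the extremal $x$ on each piece $(\tfrac{m-1}{\betap},\tfrac{m}{\betap}]$. Carrying this out, the RHS $\floor{\betap\ceil{\alphap x}}$ is non-decreasing on the piece, so the hardest point is the right endpoint $x=m/\betap$, giving the criterion $\floor{\alphap m} \geq \floor{\betap\ceil{\alphap m/\betap}}$ for all $m\in\ZZ$. This is a legitimate reformulation of (R1'), but it is indexed by the inner value $m$, not by the level $n$ in (R2'), and the sentence ``pushing this through \ldots\ reduces (R1') to $\sfloor{n}_{\alphap}\leq\sfloor{n}_{\betap}$'' is precisely where the gap is: the passage from the $m$-indexed criterion to the $n$-indexed one is not a direct relabeling. (For example, with $\alphap=1$, $\betap=2$ the endpoint criterion fails exactly at odd $m$, while (R2') fails exactly at even $n$.) One would have to argue separately that the two families of inequalities are equivalent, which is extra work the superlevel-set route avoids. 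I would steer you to that route; it is also what ``paralleling Part~I'' means in this context, since the analogous Proposition~4.3 there is proved the same way.
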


\begin{proof}
The  proof parallels that of Proposition 4.3 in  \cite[p. 284]{LagR:2018a}.
The fundamental observation is that
\begin{equation}
\label{eqn:level3}
\{x:  f_{-\alphap}\circ f_{-\betap}(x) \ge n\}
 = \{ x : x > \frac1{\alphap\betap} \sfloor{n}_{\alphap} \}  .
\end{equation}
We leave the details to the interested reader.
%This formula follows from the chain of equivalences:
%  \begin{align*}
%  \floor{ -\alphap\floor{- \betap x }} \ge n 
%    &\quad\Leftrightarrow\quad {-\alphap} \floor{ {-\betap} x} \ge n 
%      && \mbox{(the right side is in $\ZZ$)}\\
%    &\quad\Leftrightarrow\quad \floor{ {-\betap} x } \le -\frac{1}{\alphap} n 
%      && \mbox{(since $-\alphap < 0$)}\\
%    &\quad\Leftrightarrow\quad \floor{ {-\betap} x } \le \bfloor{  -\frac{1}{\alphap} n } 
%      && \mbox{(the left side is in $\ZZ$)} \\
%    &\quad\Leftrightarrow\quad {-\betap} x < \bfloor{ - \frac{1}{\alphap}n } + 1 
%      &&  \text{(the right side is in $\ZZ$)}\\
%    &\quad\Leftrightarrow\quad {-\betap} x < -\bceil{ \frac{1}{\alphap}n } + 1 
%      &&  \text{(since $\floor{-x} = -\ceil{x}$)}\\
%    &\quad\Leftrightarrow\quad \alphap \betap x > \alphap \left(\bceil{ \frac{1}{\alphap} n} -1\right)
%     && \mbox{(multiply  by $-\alphap < 0$)}\\
%    &\quad\Leftrightarrow\quad \alphap \betap x > % \sceil{n}_{-\alphap} = 
%                                                   \sfloor{n}_{\alphap}  
%      && \mbox{(definition of strict rounding function)}.
%  \end{align*}
%The level set inclusion \eqref{eqn:level2} is equivalent by \eqref{eqn:level3} to  \eqref{rounding-negative}.
%\[
%\sfloor{n}_{\alphap}  \leq   \sfloor{n}_{\betap}
%  \quad \mbox{for all } n\in \ZZ ,
%  \]
%as asserted.
\end{proof}

%%%%%%%%%%%%%%%%%%%%%%%%%
%
% Subsection 3.3
%
%%%%%%%%%%%%%%%%%%%%%%%%%
\subsection{Symmetries of $S$ for negative dilations: Proof of Theorem \ref{thm:symmetries-n}} \label{subsec:symmetries-third}

%%%%%%%%%%%%%%%%%%%
% Proof of Theorem 1.4
%%%%%%%%%%%%%%%%%%%%
\begin{proof}[Proof of Theorem \ref{thm:symmetries-n}]
We suppose $\alphap, \betap > 0$ and are to show:
\begin{enumerate}
\item[(i)]  for any integer $m\geq 1$,  if $(-\alphap, -\betap) \in S$  then $( -\alphap, -{m}\betap)\in S$.
\item[(ii)]  for any integer $m\geq 1$,  if $(-\alphap, -\betap) \in S$  then 
$(-\frac{1}{m}{\alphap}, -\frac{1}{m}\betap) \in S$.
\end{enumerate}
These follow from Proposition \ref{prop:strict-rounding-relations}
and Proposition \ref{prop:rounding-neg};
the proof parallels that of Theorem 2.1 in \cite[p.~285]{LagR:2018a}.
\end{proof}

%%%%%%%%%%%%%%%%%%%%%%%%%%%%%%%%%%%%%%%%%%%%%%%%%%%%%%%%%%%%%%%%%%%%%%%%%%%%%%%%%%%
%
% SECTION  4:Linear fractional symmetries 
%
%%%%%%%%%%%%%%%%%%%%%%%%%%%%%%%%%%%%%%%%%%%%%%%%%%%%%%%%%%%%%%%%%%%%%%%%%%%%%%%%%%%
\section{Negative dilations classification: linear fractional symmetries}
\label{sec:negative-LFS}

This section   establishes the linear fractional symmetries of $S$ for negative dilations.
We will use these symmetries to generate all the rational solutions in case  $(iii^{\ast})$,
in particular to produce the sporadic rational solutions.

%%%%%%%%%%%%%%%%%%%%%%%%%%%%%%%%%
%
% SUBSECTION N 4.1 Birational coordinate changes
%
%%%%%%%%%%%%%%%%%%%%%%%%%%%%%%%%%
\subsection{Birational coordinate change: $(\uu,\vv)$-coordinates} \label{sec:birational-1} 

The proofs use  a birational change
of coordinates of the parameter space describing the two
negative dilations.
We map  $(\alphap, \betap)$-coordinates of the parameter space to
$(\uu, \vv)$-coordinates, given by 
\begin{equation}\label{eqn:uv-prime-coord}
(\uu,\vv) := \left( \frac{1}{\betap}, \frac{\alphap}{\betap} \right) = \left( -\frac{1}{\beta}, \frac{\alpha}{\beta} \right).
\end{equation}
Its inverse map is  
$(\alphap, \betap) = \left( \frac{\vv}{\uu}, \frac{1}{\uu} \right) $.
The negative dilation part of the solution set $S$ 
is pictured  in the new coordinates  in 
Figure \ref{fig:uv-prime-coord}.

%%%%%%%%%%%%%%%%%%%%%%%%%%%%%%
%
%  Figure N4.1 S in (\uu, \vv) coordinates 
%
%%%%%%%%%%%%%%%%%%%%%%%%%%%%%%%
\begin{figure}[h]
\begin{center}
  \begin{tikzpicture}[scale=.50]
  \draw[->] (0,0) -- (10,0) node[right] {$\uu= 1/\betap$};
  \draw[->] (0,0) -- (0,10) node[above] {$\vv= \alphap/\betap$};
  \draw[scale=1.8] (1,0.1) -- (1,-0.1) node[below] {1};
  \draw[scale=1.8] (0.1,1) -- (-0.1,1) node[left] {1};
  \begin{scope}[scale=1.8,blue]
    \draw[domain=0:5.2,smooth,variable=\x] plot ({\x},{1});
    \draw[domain=0:5.2,smooth,variable=\x] plot ({\x},{2});
    \draw[domain=0:5.2,smooth,variable=\x] plot ({\x},{3});
    \draw[domain=0:5.2,smooth,variable=\x] plot ({\x},{4});
    \draw[domain=0:5.2,smooth,variable=\x] plot ({\x},{5});
  \end{scope}
  \begin{scope}[scale=1.8,red]
    \draw[domain=1:5.2,smooth,variable=\y]  plot ({\y},{\y});
    \draw[domain=1:2.6,smooth,variable=\y]  plot ({\y},{2*\y});
    \draw[domain=1:1.75,smooth,variable=\y]  plot ({\y},{3*\y});
    \draw[domain=1:1.3,smooth,variable=\y]  plot ({\y},{4*\y});
    \draw[domain=1:1.05,smooth,variable=\y]  plot ({\y},{5*\y});
    \draw[domain=1:2.6,smooth,variable=\y]  plot ({2*\y},{\y});
    \draw[domain=1:1.75,smooth,variable=\y]  plot ({3*\y},{\y});
    \draw[domain=1:1.3,smooth,variable=\y]  plot ({4*\y},{\y});
    \draw[domain=1:1.05,smooth,variable=\y]  plot ({5*\y},{\y});
    \draw[domain=1:1.7,smooth,variable=\y]  plot ({2*\y},{3*\y});
    \draw[domain=1:1.8,smooth,variable=\y]  plot ({3*\y},{2*\y});
    \draw[domain=1:1.3,smooth,variable=\y]  plot ({4*\y},{3*\y});
    \draw[domain=1:1.3,smooth,variable=\y]  plot ({3*\y},{4*\y});
    \draw[domain=1:1.05,smooth,variable=\y]  plot ({3*\y},{5*\y});
    \draw[domain=1:1.05,smooth,variable=\y]  plot ({4*\y},{5*\y});
    \draw[domain=1:1.05,smooth,variable=\y, red]  plot ({5*\y},{3*\y});
    \draw[domain=1:1.05,smooth,variable=\y]  plot ({5*\y},{4*\y});
  \end{scope}
  % sporadic solutions
  \begin{scope}[red,scale=1.8]
  \foreach \y in {1/4,1/6,1/8,1/10,1/12}{% node on the grid we have drawn 
    \node[draw,circle,inner sep=0.5pt,fill] at (1-\y,2-2*\y) {};  }
  \foreach \y in {1/6,1/9,1/12,1/15,1/18
                 ,2/9,2/15,2/21}{% node on the grid we have drawn 
    \node[draw,circle,inner sep=0.5pt,fill] at (1-\y,3-3*\y) {};  }
  \foreach \y in {1/8,1/12,1/16,1/20,1/24
                  ,2/12,2/20,2/28
                  ,3/8,3/16,3/20,3/28}{ 
    \node[draw,circle,inner sep=0.5pt,fill] at (1-\y,4-4*\y) {};  }
  \foreach \y in {1/12,1/18,1/24,1/30
                 ,2/18,2/30
                 ,4/18,4/30} {
    \node[draw,circle,inner sep=0.5pt,fill] at (2-2*\y,3-3*\y) {};}
  \foreach \y in {1/12,1/18,1/24
                 ,3/12,3/24,3/30,3/42} {
    \node[draw,circle,inner sep=0.5pt,fill] at (3-3*\y,2-2*\y) {};}
  \foreach \y in {1/36,1/24 % 1/12
  				   ,2/36 % 2/12
  				   ,3/24 % 3/12,
  				   ,5/48,5/36,5/24 % 5/12
  				   %6/12,
  				   ,9/48,9/24 % 9/12} { % (12r-l)/r
    }{\node[draw,circle,inner sep=0.5pt,fill] at (3-3*\y,4-4*\y) {};}
    %\begin{scope}[shift=(0,0)]
      \foreach \y in {1/48,1/36,1/24 % 1/12
  				   ,2/36 % 2/12
  				   ,4/36 % 4/12,
  				   ,5/48,5/36,5/24 % 5/12
  				   ,8/36 % 8/12} { % (12r-l)/r
      }{\node[draw,circle,inner sep=0.5pt,fill] at (4-4*\y,3-3*\y) {};}
    %\end{scope}
  \end{scope}
  \begin{scope}[green,scale=1.8]
    \draw[domain=1.24:5.4,smooth,variable=\x] plot ({\x},{\x/(\x-1)});
    \draw[domain=1.62:5.4,smooth,variable=\x] plot ({\x},{2*\x/(\x-1)});
    \draw[domain=2.47:5.4,smooth,variable=\x] plot ({\x},{\x/(\x-2)});
    \draw[domain=2.32:5.4,smooth,variable=\x] plot ({\x},{3*\x/(\x-1)});
    \draw[domain=3.24:5.4,smooth,variable=\x] plot ({\x},{2*\x/(\x-2)});
    \draw[domain=3.71:5.4,smooth,variable=\x] plot ({\x},{\x/(\x-3)});
  \end{scope}
\end{tikzpicture}
\end{center}
\caption{Negative dilation part of  $S$ viewed in  $(\uu, \vv)$-coordinates}
\label{fig:uv-prime-coord}
\end{figure}
%%%%%%%%%%%%%%%%%%%%%%%%%%%%%%
%
%  END Figure N4.1 S in (\uu, \vv) coordinates 
%
%%%%%%%%%%%%%%%%%%%%%%%%%%%%%%%

In  $(\uu, \vv)$ coordinates, 
Theorem \ref{thm:symmetries-n}  says the following. 
%%%%%%%%%%%%%%%%%%%%%
% Theorem 4.1
%%%%%%%%%%%%%%%%%%%%
\begin{thm}[Linear Symmetries] 
\label{thm:symmetries-uv}
For any integer $m \ge 1$, solutions to 
${ [f_{-\vv/\uu}, f_{-1/\uu}] \geq 0 }$ are preserved under the maps
\[ (\uu, \vv) \mapsto (m\uu, \vv) \quad\text{and}\quad (\uu,\vv) \mapsto (\uu, m\vv)\]
\end{thm}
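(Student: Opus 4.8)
The plan is to observe that Theorem \ref{thm:symmetries-uv} is nothing but Theorem \ref{thm:symmetries-n} transported through the birational coordinate change \eqref{eqn:uv-prime-coord}, so the proof reduces to a short bookkeeping check matching the two symmetries on each side. Recall from \eqref{eqn:uv-prime-coord} that $(\uu, \vv) = (1/\betap, \alphap/\betap)$ with inverse $(\alphap, \betap) = (\vv/\uu, 1/\uu)$, and that in $(\alphap, \betap) = (-\alpha, -\beta)$ coordinates Theorem \ref{thm:symmetries-n} asserts invariance of the negative-dilation part of $S$ under the two maps $(\alphap, \betap) \mapsto (m\alphap, \betap)$ and $(\alphap, \betap) \mapsto (\tfrac1m \alphap, \tfrac1m \betap)$ for each integer $m \ge 1$.

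First I would compute the effect of each of these two maps in $(\uu, \vv)$-coordinates. The map $(\alphap, \betap) \mapsto (m \alphap, \betap)$ leaves $\betap$ unchanged, hence leaves $\uu = 1/\betap$ unchanged, and multiplies $\vv = \alphap/\betap$ by $m$; so it is exactly the map $(\uu, \vv) \mapsto (\uu, m\vv)$. Dually, the map $(\alphap, \betap) \mapsto (\tfrac1m \alphap, \tfrac1m \betap)$ leaves the ratio $\vv = \alphap/\betap$ unchanged and sends $\uu = 1/\betap$ to $m/\betap = m\uu$; so it is exactly $(\uu, \vv) \mapsto (m\uu, \vv)$. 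Since $m \ge 1$ is a positive integer, the image stays in the region $\uu, \vv > 0$ corresponding to negative dilations, so there is no domain issue, and invariance of the solution set under these two maps is immediate from Theorem \ref{thm:symmetries-n}.

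Alternatively --- and this is the route I would actually write out if a self-contained argument is wanted --- one can bypass Theorem \ref{thm:symmetries-n} and argue directly from the strict rounding criterion. By Proposition \ref{prop:rounding-neg}, the relation $[f_{-\vv/\uu}, f_{-1/\uu}] \ge 0$ is equivalent to $\sfloor{n}_{\vv/\uu} \le \sfloor{n}_{1/\uu}$ for all $n \in \ZZ$. For the map $(\uu, \vv) \mapsto (\uu, m\vv)$ one uses $m\vv/\uu = m \cdot (\vv/\uu)$ together with Proposition \ref{prop:strict-rounding-relations}(a) to get $\sfloor{x}_{m\vv/\uu} \le \sfloor{x}_{\vv/\uu}$ for all $x$, and then chains this inequality with the hypothesis. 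For $(\uu, \vv) \mapsto (m\uu, \vv)$ one uses the elementary scaling identity $\sfloor{x}_{\gamma/m} = \tfrac1m \sfloor{mx}_{\gamma}$ (valid because $\sfloor{mx}_\gamma = m\, \sfloor{x}_{\gamma/m}$ straight from Definition \ref{defi:strict-round}), applied with $\gamma = \vv/\uu$ and $\gamma = 1/\uu$, noting that $mn$ ranges over a subset of $\ZZ$ as $n$ does, so the hypothesis at the integer $mn$ gives exactly what is needed. In either approach the argument is purely formal; there is no real obstacle, the only point to watch being the correct pairing of ``scale the first coordinate'' versus ``scale the second coordinate'' with the two symmetries of Theorem \ref{thm:symmetries-n}, which the computation in the previous paragraph pins down.
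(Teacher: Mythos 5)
Your first approach is exactly what the paper does: the sentence immediately preceding Theorem \ref{thm:symmetries-uv} declares it to be the restatement of Theorem \ref{thm:symmetries-n} in $(\uu,\vv)$-coordinates, and your bookkeeping correctly pairs $(\alphap,\betap)\mapsto(m\alphap,\betap)$ with $(\uu,\vv)\mapsto(\uu,m\vv)$ and $(\alphap,\betap)\mapsto(\tfrac1m\alphap,\tfrac1m\betap)$ with $(\uu,\vv)\mapsto(m\uu,\vv)$. Your alternative direct argument via Propositions \ref{prop:rounding-neg} and \ref{prop:strict-rounding-relations} together with the scaling identity $\sfloor{x}_{\gamma/m}=\tfrac1m\sfloor{mx}_{\gamma}$ is also correct and is essentially the proof of Theorem \ref{thm:symmetries-n} unpacked in the $(\uu,\vv)$ variables.
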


The linear  symmetries of the 
%nonnegative commutator relation 
set $S$ are   visually  apparent in Figure \ref{fig:uv-prime-coord}
after this coordinate change.

%%%%%%%%%%%%%%%%%%%%%%%%%%%%%%%%%
%
% SUBSECTION N 4.2  Lattice Disjointness Criterion
%
%%%%%%%%%%%%%%%%%%%%%%%%%%%%%%%%%
\subsection{Lattice disjointness criterion: negative dilations}
\label{sec:neg-disjointness1}

We reformulate the nonnegative  commutator relation  in terms of
the new parameters $(\uu,\vv)$ as follows.
The criterion involves a disjointness property 
of a rectangular lattice $\Lambda_{\uu,\vv}$
from an ``enlarged diagonal set'' $\cDprime$; 
see (P2') of Proposition~\ref{prop:diagonal-neg} below.

%%%%%%%%%%%%%%%%%%%%%%%%%%%%%%%%%
%
% SUBSubSECTION N 4.2.1 Approximate diagonal
%
%%%%%%%%%%%%%%%%%%%%%%%%%%%%%%%%%
\subsubsection{Enlarged diagonal set $\cDprime$}
\label{subsubsec:approx-diag1}

The positive dilation case in Part I
 used an ``approximate diagonal'' set of the plane $\RR^2$ given by 
\begin{equation}\label{eq:approx-diag}
 \mathcal{D} := \bigcup_{n \in \ZZ} \{ (x,y) : n < x < n+1  \text{ and }   n < y < n+1 \}, 
\end{equation}
see \cite[Definition 5.1]{LagR:2018a}.
The  set $\cD$ is invariant under 
the negation map $(x,y) \mapsto (-x, -y)$
and reflection map $(x,y) \mapsto (y,x)$.
It is pictured in Figure \ref{fig:diagonal-neg}(a). 
The negative dilation case will use instead the following set. 
%%%%%%%%%
% Defn. 4.2 Enlarged
%%%%%%%%
\begin{defi}\label{def:regioncDprime}
The  {\em  punctured enlarged diagonal set} 
$\cDprime$ in $\RR^2$ is  the region
\begin{equation*}
 \cDprime := \bigcup_{n\in\ZZ} \{(x,y) :  n\leq x\leq n+1,\, n < y < n+1,\, \text{ and }\, x\neq y\}.
 %\{ (x,y) : \floor{y} \leq x \leq \ceil{y} \text{ and } x \neq y \}.
\end{equation*}
\end{defi}
Here $\cDprime$  differs from $\mathcal{D}$
in being   a ``punctured approximate diagonal'' since it omits the exact diagonal line;
it is pictured in  Figure \ref{fig:diagonal-neg}(b). 
It is characterized in terms of  rounding functions by 
\begin{align}
\label{eq:dprime-round}
 \cDprime &= \{ (x,y) : \text{either} \quad \floor{y} \leq x < y \quad\text{or}\quad y < x \leq \ceil{y} \},
\end{align}
and in terms of strict rounding functions by
 \begin{align}
 \label{eq:dprime-strict}
\cDprime &= \{(x,y) :\text{either} \quad \sfloor{x} < y < x \quad\text{or}\quad x < y< \sceil{x} \} .
\end{align}
The set $\cD \cup \cDprime$ will also be important in the arguments in Sections 5 and 6;
it is  pictured in Figure \ref{fig:diagonal-neg}(c).

%%%%%%%%%%%%%%%%%%%
%
% Figure 4.2: Regions D and D' and D \cup D'
%
%%%%%%%%%%%%%%%%%%%

\begin{figure}[h]
\minipage{0.32\textwidth}
\begin{tikzpicture}[scale=0.55]
  \draw[->] (2,1) -- (5.2,1) node[right] {$x$};
  \draw[->] (1,2) -- (1,5.2) node[above] {$y$};
  \draw[-] (-0.2,1) -- (0,1); %continues x,y-axes on negative side
  \draw[-] (1,-0.2) -- (1,0);
  \draw[scale=1] (2,1.1) -- (2,1-0.1) node[below] {1};
  \draw[scale=1] (1.1,2) -- (1-0.1,2) node[left] {1};
  
  \foreach \n in {0,1,2,3,4} {
    \fill[gray,nearly transparent] (\n,\n) -- (\n,\n+1) -- (\n+1,\n+1) -- (\n+1,\n) -- cycle;
    \draw[dashed] (\n,\n) -- (\n,\n+1);
    \draw[dashed] (\n,\n+1) -- (\n+1,\n+1);
    \draw[dashed] (\n+1,\n+1) -- (\n+1,\n);
    \draw[dashed] (\n+1,\n) -- (\n,\n);
  }
\end{tikzpicture}
\\
\centering
(a) $\cD$ 
\endminipage\hfill
\minipage{0.32\textwidth}
\begin{tikzpicture}[scale=0.55]
  \draw[->] (2,1) -- (5.2,1) node[right] {$x$};
  \draw[-] (-0.2,1) -- (0,1);
  \draw[->] (1,-0.2) -- (1,5.2) node[above] {$y$};
  \draw[scale=1] (2,1.1) -- (2,1-0.1) node[below] {1};
  \draw[scale=1] (1.1,2) -- (1-0.1,2) node[left] {1};
  
  \foreach \n in {0,1,2,3,4} {
    \fill[gray,nearly transparent] (\n,\n+0.05) -- (\n,\n+1) -- (\n+0.95,\n+1) -- cycle;
    \fill[gray,nearly transparent] (\n+0.05,\n) -- (\n+1,\n) -- (\n+1,\n+0.95) -- cycle;
    \draw[-] (\n,\n) -- (\n,\n+1);
    \draw[dashed] (\n,\n+1) -- (\n+1,\n+1);
    \draw[-] (\n+1,\n+1) -- (\n+1,\n);
    \draw[dashed] (\n+1,\n) -- (\n,\n);
    \draw[dashed] (\n,\n) -- (\n+1,\n+1);
  }
%  \fill[gray,nearly transparent] (5,5) -- (5,5.2) -- (5.2,5.2) -- (5.2,5) -- cycle; 
\end{tikzpicture}
\\ \centering
(b) $\cDprime$
\endminipage\hfill
\minipage{0.32\textwidth}%
\begin{tikzpicture}[scale=0.55]
  \draw[->] (2,1) -- (5.2,1) node[right] {$x$};
  \draw[-] (-0.2,1) -- (0,1);
  \draw[->] (1,-0.2) -- (1,5.2) node[above] {$y$};
  \draw[scale=1] (2,1.1) -- (2,1-0.1) node[below] {1};
  \draw[scale=1] (1.1,2) -- (1-0.1,2) node[left] {1};
  
  \foreach \n in {0,1,2,3,4} {
    \fill[gray,nearly transparent] (\n,\n+0.01) -- (\n,\n+0.99) -- (\n+1,\n+0.99) -- (\n+1,\n+0.01) -- cycle;
    %\fill[gray,nearly transparent] (\n+0.05,\n) -- (\n+1,\n) -- (\n+1,\n+0.95) -- cycle;
    \draw[-] (\n,\n) -- (\n,\n+1);
    \draw[dashed] (\n,\n+1) -- (\n+1,\n+1);
    \draw[-] (\n+1,\n+1) -- (\n+1,\n);
    \draw[dashed] (\n+1,\n) -- (\n,\n);
    %\draw[dashed] (\n,\n) -- (\n+1,\n+1);
  }
%  \fill[gray,nearly transparent] (5,5) -- (5,5.2) -- (5.2,5.2) -- (5.2,5) -- cycle; 
\end{tikzpicture}
\\ \centering 
(c) $\cD \cup \cDprime$
\endminipage
\caption{Regions $\cD$, % = \{\floor{y}<x<\ceil{y}\}$ 
 $\cDprime$ and $\cD \cup \cDprime$ in $\RR^2$, in gray.}
\label{fig:diagonal-neg}
\end{figure}

%\newpage
%%%%%%%%%%%%%%
% Remark 4.3
%%%%%%%%%%%%%
\begin{rmk}
%We remark on symmetries of the set $\cDprime$.
%The set 
$\cDprime$ is invariant under the negation map  $(x,y) \mapsto (-x, -y)$. 
However, it is not  invariant under the reflection map $(x,y) \mapsto (y,x)$.
%the problem concerns boundary points. 
%Under reflection, the only parts of $\cDprime$  not  sent to $\cDprime$ are
%boundary points where $x$
%is an integer, since 
%horizontal edges with $y \in \ZZ$ are excluded from $\cDprime$ 
%while 
%vertical edges with $x \in \ZZ$ are included in $\cDprime$,  
%aside from integer points $(x,y) \in \ZZ^2$. 
%These edges are interchanged under the reflection, so the reflection 
% is anti-invariant on the boundary where either $x$ or $y$ is an integer.
%This lack of reflection symmetry 
%can be seen from comparing \eqref{eq:dprime-round} and \eqref{eq:dprime-strict}. 
%because
%  $\floor{y} \leq x$ is equivalent to 
% $ y< \floor{x}+1= \sceil{x} $ (a strict inequality) and analogously 
% $x\leq \ceil{y}$ is equivalent to $\sfloor{x} < y$.
\end{rmk}
%For $\cDprime$ the lack of perfect reflection symmetry $(x,y) \mapsto (y,x)$
%for $\cDprime$ is an important way  
%the negative dilation case differs from the positive dilation case.
%and it breaks the birational symmetry satisfied
%in the positive dilation case. 

%%%%%%%%%%%%%%%%%%%
% Proposition  4.4 (Sec 4.1)
%%%%%%%%%%%%%%%%%%%
\begin{prop}[Lattice disjointness criterion: negative dilations]
\label{prop:diagonal-neg}
For $\uu,\vv> 0$, the following three properties are equivalent. 
\begin{enumerate}
\item[\textnormal{(P1')}] The nonnegative commutator relation holds:
\[
[f_{-\vv/\uu}, f_{-1/\uu}](x) \geq 0 \quad \mbox{for all }  x \in \RR.
\]

\item[\textnormal{(P2')}] The two-dimensional rectangular lattice 
$ \Lambda_{\uu,\vv} = \uu \ZZ \times \vv\ZZ$ 
is disjoint from the region $\cDprime= \{ (x,y) : \floor{y} \leq x \leq \ceil{y},\, x\neq y \}$. 
That is,
\[
\Lambda_{\uu,\vv} \bigcap \cDprime = \emptyset.
\]

\item[\textnormal{(P3')}]
The set $\Lambda^+_{\uu,\vv}$ is disjoint from $\cDprime$,
where  $\Lambda_{\uu,\vv}^+ = \bigcup_{k\in \ZZ}\left( \Lambda_{\uu,\vv} + (k,k) \right) $
is the union of all 
translates of $\Lambda_{\uu,\vv}$ by  integer diagonal vectors $(k,k)$.
%Equivalently 
%\[ \Lambda_{\uu,\vv}^+ = \rowspan\begin{pmatrix}
%\uu & 0 \\ 
%0 & \vv \\
%1 & 1
%\end{pmatrix}. \]
\end{enumerate}
\end{prop}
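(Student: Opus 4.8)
The plan is to separate the three equivalences into the formal piece (P2')$\Leftrightarrow$(P3') and the substantive piece (P1')$\Leftrightarrow$(P2'). Throughout set $\alphap=\vv/\uu$ and $\betap=1/\uu$, both positive since $\uu,\vv>0$; then (P1') is precisely the nonnegative commutator relation for the pair $(\alphap,\betap)$, so Proposition~\ref{prop:rounding-neg} applies. For (P2')$\Leftrightarrow$(P3'), one implication is immediate from $\Lambda_{\uu,\vv}\subseteq\Lambda_{\uu,\vv}^+$. For the converse I would observe that $\cDprime$ is invariant under every integer diagonal translation $(x,y)\mapsto(x+k,y+k)$ --- clear from Definition~\ref{def:regioncDprime} by reindexing the union $n\mapsto n-k$ --- so that $(\Lambda_{\uu,\vv}+(k,k))\cap\cDprime=(\Lambda_{\uu,\vv}\cap\cDprime)+(k,k)$ for each $k\in\ZZ$; hence $\Lambda_{\uu,\vv}\cap\cDprime=\emptyset$ forces $\Lambda_{\uu,\vv}^+\cap\cDprime=\emptyset$.

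For (P1')$\Leftrightarrow$(P2') I would start from Proposition~\ref{prop:rounding-neg} applied with $(\alphap,\betap)=(\vv/\uu,1/\uu)$, which says (P1') holds iff $\sfloor{n}_{\vv/\uu}\le\sfloor{n}_{1/\uu}$ for all $n\in\ZZ$. Multiplying by $\uu>0$ and unwinding Definition~\ref{defi:strict-round}, the inequality at a fixed $n$ becomes $\vv\,\sfloor{\uu n/\vv}\le\sfloor{\uu n}$, in which the left side is the largest element of $\vv\ZZ$ lying strictly below $\uu n$ and the right side is the largest integer strictly below $\uu n$. Since the left side is always strictly below $\uu n$, the inequality fails at $n$ exactly when some point of $\vv\ZZ$ lies in the open interval $(\sfloor{\uu n},\uu n)$; by the strict-rounding description \eqref{eq:dprime-strict} of $\cDprime$ this is exactly the statement that the lattice point $\bigl(\uu n,\ \vv\,\sfloor{\uu n/\vv}\bigr)\in\Lambda_{\uu,\vv}$ lies on the lower branch $\{\sfloor{x}<y<x\}$ of $\cDprime$. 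This shows failure of (P1') implies $\Lambda_{\uu,\vv}\cap\cDprime\neq\emptyset$. For the converse, given $(\uu a,\vv b)\in\Lambda_{\uu,\vv}\cap\cDprime$, \eqref{eq:dprime-strict} puts it on one of the two branches of $\cDprime$; since $\Lambda_{\uu,\vv}$ and $\cDprime$ are both invariant under the negation $(x,y)\mapsto(-x,-y)$, which interchanges the branches (via $\sfloor{-t}=-\sceil{t}$), I may assume it lies on the lower branch, $\sfloor{\uu a}<\vv b<\uu a$. Replacing $b$ by the largest integer $b'=\sfloor{\uu a/\vv}$ with $\vv b'<\uu a$ keeps $\vv b'$ inside $(\sfloor{\uu a},\uu a)$ because $\vv b'\ge\vv b>\sfloor{\uu a}$, so $\vv\,\sfloor{\uu a/\vv}>\sfloor{\uu a}$, i.e.\ (P1') fails at $n=a$. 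Together with the previous paragraph this gives (P1')$\Leftrightarrow$(P2')$\Leftrightarrow$(P3').

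The step I expect to require the most care is the converse half just described: passing from the mere existence of \emph{some} lattice point in $\cDprime$ to a genuine violation of the rounding inequality. The lattice point produced in the forward direction is very special (the largest multiple of $\vv$ below $\uu n$), whereas an arbitrary lattice point of $\cDprime$ need not have that form and may even sit on the ``wrong'' upper branch $\{x<y<\sceil{x}\}$. The two repairs are the negation symmetry of $\Lambda_{\uu,\vv}$ and $\cDprime$, used to move onto the lower branch, and the replacement of $b$ by $\sfloor{\uu a/\vv}$, used when several multiples of $\vv$ already sit below $\uu a$. A secondary, routine matter is to confirm that the strict-floor and strict-ceiling bookkeeping is uniform whether or not $\uu n$, or the relevant multiple of $\vv$, is an integer --- which is exactly what the functions $\sfloor{\cdot}$, $\sceil{\cdot}$ are built to ensure, so no separate degenerate cases should arise.
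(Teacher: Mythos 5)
Your proof is correct and follows essentially the same route as the paper. The (P2')$\Leftrightarrow$(P3') argument (diagonal-translation invariance of $\cDprime$) is the paper's verbatim, and for (P1')$\Leftrightarrow$(P2') you supply exactly the details the paper omits: unwinding Proposition~\ref{prop:rounding-neg} to the condition $\sfloor{\uu n}<m\vv<\uu n$ for some $m,n\in\ZZ$ and matching this against the strict-rounding description \eqref{eq:dprime-strict} of $\cDprime$, with the negation symmetry disposing of the upper branch.
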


%%%%%%%%%%%%%%%%%%%
% Proof of Proposition 4.4
%%%%%%%%%%%%%%%%%%%
\begin{proof}
(P1') $\Leftrightarrow$ (P2')
The argument parallels the proof in \cite{LagR:2018a}
of (P1) $\Leftrightarrow$ (P2) in Proposition 5.2.
We omit the details.
The main step is that
\begin{equation}\label{eq:rounding-neg2}
\sfloor{n}_{\vv/\uu} > \sfloor{n}_{1/\uu} \quad\text{for some }n\in\ZZ ,
\end{equation}
from Proposition \ref{prop:rounding-neg},
is equivalent to the condition
\begin{equation}\label{eq:disjoint-neg1}
\text{there exist } m, n \in \ZZ \text{ such that }  \sfloor{n\uu} < m\vv < {n\uu}.
\end{equation}

(P2') $\Leftrightarrow$ (P3')
Since $\Lambda_{\uu,\vv} \subseteq \Lambda^+_{\uu,\vv}$,
the implication (P3') $\Rightarrow$ (P2') holds.
In the other direction, suppose $\Lambda_{\uu,\vv}$ is disjoint from $\cDprime$.
The enlarged diagonal set $\cDprime$ is sent to itself by translation 
by an integer diagonal vector $(k,k)$,
so the translated lattice $\Lambda_{\uu,\vv} + (k,k)$ is disjoint from $\cDprime$.
This holds for all integers $k$, so
$\Lambda_{\uu,\vv}^+$ is disjoint from $\cDprime$.
Consequently (P2') $\Rightarrow$ (P3').
\end{proof}

%%%%%%%%%%%%%%%%%%%%%%%%%%%%%%%%%%%%%%%%%%%%%%%%
%%
%%  4 3 Extension of GCD-LCM
%%
%%%%%%%%%%%%%%%%%%%%%%%%%%%%%%%%%%%%%%%%%%%%%%%%

\subsection{Extension of GCD and LCM to positive real numbers}
\label{subsubsec:rational-gcd}

We  formulate an extension of the notion of  $\gcd$ and $\lcm$ to pairs of 
positive real numbers, for use in arguments to prove the linear fractional 
rescaling symmetries.
This extension is based on  commensurability of lattices. 
 
%%%%%%%%%%%%%%%%%%%
% Definition 4.5 Extended GCD
%%%%%%%%%%%%%%%%%%%
\begin{defi}
Given real numbers $u, v>0$, we define their   {\em extended greatest common divisor} by
\[ \egcd(u,v)  := \begin{cases}
w &\text{if } u\ZZ + v\ZZ = w\ZZ, \, w > 0  \\
0 & \text{if } u\ZZ + v\ZZ \text{ is dense in }\RR.
\end{cases}\]
We also define their {\em extended least common multiple}
\[ \elcm(u,v)  := \begin{cases}
w &\text{if } u\ZZ \cap v\ZZ = w\ZZ, \, w > 0  \\
+\infty & \text{if } u\ZZ \cap v\ZZ  = \{0\}.
\end{cases}\]
\end{defi}
These definitions match the usual notions when $u,v$ are  positive integers. 
We will frequently use the  following proposition, whose
proof we omit.

%%%%%%%%%%%%%%%%%%%%%
% Proposition 4.6
%%%%%%%%%%%%%%%%%%%
\begin{prop}
Suppose $u,v$ are positive real numbers.
\begin{enumerate}[(i)]
\item We have $d = \egcd(u,v) > 0$ if and only if there are coprime positive integers $p,q$ such that
$u = pd \quad\text{and}\quad v = qd.$

\item 
We have $m = \elcm(u,v) < +\infty$ if and only if there are coprime positive integers $p,q$ such that
$ m = qu \quad \text{and}\quad m = pv.$
%\item
%We have  $\egcd(u, v) >0$ 
%if and only if $\elcm(u,v) < +\infty$;
%in this case we have
%\begin{equation*}
%uv = \egcd(u,v) \elcm(u,v).
%%\qquad \text{provided}\gcd(u,v)>0.
%\end{equation*}

\end{enumerate}
\end{prop}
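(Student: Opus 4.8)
The plan is to reduce both statements to B\'ezout's identity for the integers, namely that $p\ZZ + q\ZZ = \ZZ$ precisely when $\gcd(p,q)=1$, using throughout that $u\ZZ+v\ZZ$ and $u\ZZ\cap v\ZZ$ are subgroups of $(\RR,+)$.

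For part (i), I would argue both directions directly. If $u = pd$ and $v = qd$ with $d>0$ and $p,q$ coprime positive integers, then $u\ZZ + v\ZZ = d(p\ZZ + q\ZZ) = d\ZZ$ by B\'ezout, so $\egcd(u,v)=d$. Conversely, if $d := \egcd(u,v) > 0$, then by definition $u\ZZ + v\ZZ = d\ZZ$, so in particular $u,v \in d\ZZ$; since $u,v,d$ are all positive we may write $u = pd$, $v=qd$ with $p,q$ positive integers, and since $d$ itself lies in $u\ZZ + v\ZZ$ there are integers $a,b$ with $au+bv=d$, i.e.\ $ap+bq=1$, which forces $\gcd(p,q)=1$.

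For part (ii) I would run the parallel argument on the intersection subgroup. If $m = qu = pv$ with $p,q$ coprime positive integers, then $m \in u\ZZ \cap v\ZZ$ gives $m\ZZ \subseteq u\ZZ \cap v\ZZ$; conversely, any $x = au = bv$ in the intersection satisfies $ap = bq$ (divide through by $m>0$), and coprimality of $p,q$ forces $q \mid a$, so $x \in (qu)\ZZ = m\ZZ$. Hence $u\ZZ \cap v\ZZ = m\ZZ$ and $\elcm(u,v)=m<\infty$. In the other direction, if $m := \elcm(u,v) < \infty$ then $m>0$ and $m \in u\ZZ \cap v\ZZ$ gives $m = qu = pv$ for positive integers $p,q$; writing $g = \gcd(p,q)$, the number $m/g = (q/g)u = (p/g)v$ is a positive element of $u\ZZ \cap v\ZZ = m\ZZ$ with $m/g \le m$, so $m/g = m$ and $g=1$. (Alternatively, one can deduce (ii) from (i) via the identity $\egcd(u,v)\cdot\elcm(u,v) = uv$, once one notes that $\elcm(u,v)<\infty$ iff $u/v \in \QQ$ iff $\egcd(u,v)>0$.)

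There is no genuine obstacle here; the only points requiring a little care are ensuring the integers extracted are positive rather than merely nonzero (which uses $u,v>0$) and invoking B\'ezout in both directions in part (i). The argument is short enough to justify the omission in the text.
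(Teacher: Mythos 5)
The paper explicitly states ``whose proof we omit,'' so there is no paper proof to compare against. Your argument is correct and complete: both directions of (i) reduce cleanly to B\'ezout, and in (ii) the forward direction correctly derives $ap=bq$ and uses coprimality to get $m\ZZ = u\ZZ\cap v\ZZ$, while the converse correctly forces $\gcd(p,q)=1$ by minimality of $m$ as a generator of $m\ZZ$. The parenthetical remark that $\egcd(u,v)\cdot\elcm(u,v)=uv$ (and that finiteness of $\elcm$ is equivalent to positivity of $\egcd$, both equivalent to $u/v\in\QQ$) is also correct and would let you deduce (ii) from (i) more slickly; either route is an appropriate filling of the gap the authors left.
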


%%%%%%%%%%%%%%%%%%%%%%%%%%%%%%%%%%%
%
% Subsection 4.4   Disjointness of Lattice and Regions Criteria: Sporadic Cases
%
%%%%%%%%%%%%%%%%%%%%%%%%%%%%%%%%%%%%%%%%
\subsection{Disjointness of lattices and regions: part 1}
\label{sec:disjoint}

We prove several lemmas characterizing when various lattices
are disjoint from $\cDprime$, resp. $\cD \cup \cDprime$, 
to facilitate use of Proposition \ref{prop:diagonal-neg}.
The disjointness criteria involving  $\cD \cup \cDprime$  are
needed for the necessity proofs in Section~\ref{sec:negative-N}.

The first two lemmas will be  used
to show  existence of the type $(iii^{\ast})$ 
%sporadic
 rational solutions in  the sufficiency proof in Section \ref{sec:negative-S}.

%%%%%%%%%%%%%%%%%%%
% Lemma 4.7 [formerly 4.8]
%%%%%%%%%%%%%%%%%%%
\begin{lem}[Diagonal expansion-contraction]
\label{lem:diag-expansion} Let $r \ge 2$ be an integer, and
suppose $\muu , \nuu $ are  real parameters satisfying $1+\muu > \nuu$.
Then the following conditions are equivalent.
\begin{enumerate}
\item[\textnormal{(S1')}] 
The lattice 
$\Lambda = \rowspan_\ZZ\begin{pmatrix}
1+\muu & \nuu \\
1 & 1 \end{pmatrix}$
is disjoint from $\cD\cup \cDprime$.

\item[\textnormal{(S2')}] 
The lattice 
$\Lambda = \rowspan_\ZZ\begin{pmatrix}
1+\muu & \nuu \\
1 & 1 \end{pmatrix}$
is disjoint from $ \cDprime$.

\item[\textnormal{(S3')}] 
 The lattice
$\Lambda^{(r)} = \rowspan_\ZZ\begin{pmatrix}
1+\frac{1}{r}\muu & \frac1{r}\nuu \phantom{\Big|}\\
\frac1{r} & \frac1{r} \phantom{\Big|} \end{pmatrix} $
is disjoint from $\cDprime$.

\end{enumerate}
\end{lem}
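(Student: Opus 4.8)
The plan is to prove the chain of equivalences (S1') $\Leftrightarrow$ (S2') $\Leftrightarrow$ (S3'). The implication (S1') $\Rightarrow$ (S2') is immediate since $\cDprime \subseteq \cD \cup \cDprime$. For the reverse direction (S2') $\Rightarrow$ (S1'), I would show directly that the lattice $\Lambda = \rowspan_\ZZ\begin{pmatrix} 1+\muu & \nuu \\ 1 & 1 \end{pmatrix}$ never meets the open diagonal squares comprising $\cD$ whenever it avoids $\cDprime$; the key point is that $\Lambda$ contains the diagonal vector $(1,1)$, so if some lattice point $(x,y)$ lay in an open square $\{n < x < n+1,\ n<y<n+1\}$ with $x = y$ forced to be avoidable only by landing exactly on the diagonal, one checks using the explicit generator $(1+\muu, \nuu)$ and the hypothesis $1 + \muu > \nuu$ that a point of $\Lambda$ on the exact diagonal $x = y$ would force $1+\muu = \nuu$ after subtracting a suitable multiple of $(1,1)$, contradicting $1 + \muu > \nuu$ (the only diagonal lattice points are the integer multiples of $(1,1)$, which lie on the boundary of the squares, hence outside $\cD$). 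So $\Lambda \cap \cD = \emptyset$ is automatic, giving (S2') $\Leftrightarrow$ (S1').

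The substantive equivalence is (S2') $\Leftrightarrow$ (S3'). Here the idea is that $\Lambda^{(r)}$ is obtained from $\Lambda$ by the diagonal scaling $(x,y) \mapsto \frac{1}{r}(x,y)$ applied to the generator $(1+\muu, \nuu)$ together with rescaling the diagonal generator $(1,1) \mapsto \frac1r(1,1)$; equivalently, $\Lambda^{(r)} = \frac1r \Lambda + (\text{the sublattice generated by } (1+\muu,\nuu) \text{ adjustments})$ — more precisely one writes $\Lambda^{(r)}$ as $\rowspan_\ZZ$ of $\frac1r$ times the original matrix, so $\Lambda^{(r)} \supseteq \frac1r \Lambda$, but in fact $\Lambda^{(r)}$ equals the lattice generated by $\frac1r(1+\muu,\nuu)$ and $\frac1r(1,1)$, and I would relate this to the $\Phi_p^r$ construction. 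The cleanest route: observe that the punctured enlarged diagonal set $\cDprime$ is the union over $n \in \ZZ$ of two open triangles inside the unit square based at $(n,n)$, each of "width and height" $1$, and that $\cDprime$ is stable under $(x,y) \mapsto (x,y) + (k,k)$. Then a lattice point $w \in \Lambda$ lies in $\cDprime$ iff, after translating by a diagonal vector into the square based at the origin, its coordinates $(s,t)$ satisfy either $\lfloor s \rfloor' < t < s$ or $s < t < \lceil s \rceil'$ in the sense of \eqref{eq:dprime-strict} relative to the base point — I would use the strict-rounding characterization \eqref{eq:dprime-strict} to make "being in $\cDprime$" into an arithmetic condition on $\muu, \nuu, r$, and a scaling by $r$ transports solutions bijectively. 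The map "divide by $r$" sends $\cDprime$-points of $\Lambda^+$ bijectively to $\cDprime$-points of $(\Lambda^{(r)})^+$ because dividing all coordinates by $r$ and the base points $(n,n)$ by $r$ preserves the shape of each triangle up to the scaling that also shrinks the lattice; combined with Proposition \ref{prop:diagonal-neg}(P3') (equivalence of intersecting $\cDprime$ with $\Lambda$ versus with $\Lambda^+$), this gives the equivalence.

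The step I expect to be the main obstacle is making the scaling argument for (S2') $\Leftrightarrow$ (S3') precise, because the lattice $\Lambda^{(r)}$ is not literally $\frac1r \Lambda$ — it has the diagonal vector $\frac1r(1,1)$, which is *finer* than $(1,1)$, so there are "new" lattice points in $\Lambda^{(r)}$ not present in $\frac1r\Lambda$, and I must check that none of these extra points lands in $\cDprime$. The right way to handle this is to work with the $\cD \cup \cDprime$ formulation and the fact (from Proposition \ref{prop:diagonal-neg}) that avoiding $\cDprime$ is equivalent to avoiding the full diagonal-translate union $\Lambda^+ \cap \cDprime = \emptyset$; then $\frac1r(\Lambda^+) = (\Lambda^{(r)})^+$ exactly, since adding all integer diagonal vectors $(k,k)$ to $\Lambda$ and then dividing by $r$ produces the same set as adding all integer diagonal vectors to $\Lambda^{(r)}$ and noting $\frac1r(k,k)$ for $k \in r\ZZ$ recovers $(\ZZ,\ZZ)$ translates while the intermediate values $k \notin r\ZZ$ are absorbed into the new generator $\frac1r(1,1)$. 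Once that set identity is established, disjointness from $\cDprime$ transfers under the bijection $w \mapsto \frac1r w$ together with the observation that $\cDprime$ is *not* scale-invariant but its intersection with the relevant lattice translates is controlled by the strict-rounding inequalities, which scale cleanly. I would close by citing \eqref{eq:dprime-strict} and Proposition \ref{prop:diagonal-neg} to package this, and record that the hypothesis $1 + \muu > \nuu$ is exactly what guarantees the generator $(1+\muu,\nuu)$ lies (weakly) below the diagonal so that all the triangles in play are the lower ones, keeping the bookkeeping uniform.
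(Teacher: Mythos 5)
Your treatment of (S1')~$\Leftrightarrow$~(S2') is correct and matches the paper: since $a(1+\muu,\nuu) + b(1,1)$ lies on $\Delta$ only when $a(1+\muu-\nuu)=0$, the hypothesis $1+\muu>\nuu$ forces $a=0$, so $\Lambda\cap\Delta = \ZZ(1,1) \subset \ZZ^2$, and hence $\Lambda$ automatically misses $\cD\smallsetminus\cDprime = \Delta\smallsetminus\ZZ^2$.

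Your argument for (S2')~$\Leftrightarrow$~(S3') has a genuine gap: the claimed identity $\tfrac1r\bigl(\Lambda^+\bigr) = \bigl(\Lambda^{(r)}\bigr)^+$ is false. Because $(1,1)\in\Lambda$ and $(1,1)\in\Lambda^{(r)}$, one has $\Lambda^+ = \Lambda$ and $(\Lambda^{(r)})^+ = \Lambda^{(r)}$, so your identity reduces to $\tfrac1r\Lambda = \Lambda^{(r)}$. But the generic element $\tfrac1r\bigl(a(1+\muu)+b,\ a\nuu+b\bigr)$ of $\tfrac1r\Lambda$ differs from the generic element $\bigl(a+\tfrac1r(a\muu+b),\ \tfrac1r(a\nuu+b)\bigr)$ of $\Lambda^{(r)}$ by the \emph{horizontal} vector $\bigl(a\cdot\tfrac{r-1}{r},\,0\bigr)$, which is not a multiple of the diagonal generator $(\tfrac1r,\tfrac1r)$; concretely, for irrational $\muu-\nuu$ and $r\ge 2$, the generator $(1+\tfrac1r\muu,\tfrac1r\nuu)$ of $\Lambda^{(r)}$ does not belong to $\tfrac1r\Lambda$. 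Invoking Proposition~\ref{prop:diagonal-neg}(P3') cannot repair this because that equivalence concerns the passage from $\Lambda_{\uu,\vv}$ to $\Lambda^+_{\uu,\vv}$, whereas both $\Lambda$ and $\Lambda^{(r)}$ already contain a diagonal generator and are equal to their own ``$+$''-closures. You also correctly note that $\cDprime$ is not scale-invariant, which is exactly why a naive $w\mapsto\tfrac1r w$ bijection cannot carry $\cDprime$-points to $\cDprime$-points.

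The paper avoids scaling the lattice entirely. It reduces each disjointness condition to a single scalar inequality on the off-diagonal generator relative to the appropriate rounding function: $\Lambda$ is disjoint from $\cDprime$ if and only if $\nuu\le\ceil{\muu}$ (read off from where $(1+\muu,\nuu)$ sits relative to $\cDprime$, using the fact that translation by $\ZZ(1,1)$ preserves both $\Lambda$ and $\cDprime$), while $\Lambda^{(r)}$ is disjoint from $\cDprime$ if and only if $\tfrac1r\nuu \le \ceil{\tfrac1r\muu}_{1/r}$ (translation invariance by $\tfrac1r\ZZ(1,1)$ replaces $\cDprime$ by the union of translates $\tfrac{j}{r}(1,1)+\cDprime$). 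The two inequalities coincide because $\ceil{\tfrac1r\muu}_{1/r} = \tfrac1r\ceil{\muu}$, the exact scaling law for rounding functions. You gestured at this (``the strict-rounding inequalities scale cleanly'') but never developed it into a proof; the lattice identity you substituted in its place does not hold.
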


\begin{proof}
To see that (S1') $\Leftrightarrow$ (S2'), observe that the lattice $\Lambda$ 
only intersects the  diagonal $\Delta$ of $\RR^2$
at integer points due to the assumption that $1+u >v$.
Thus $\Lambda$ must be disjoint from 
$\cD \smallsetminus \cDprime = \Delta \smallsetminus \ZZ^2$.

Next we show that (S2') $\Leftrightarrow$ (S3').
Observe that $\Lambda$ is disjoint from $ \cDprime$ if and only if 
the point $(1+\muu, \nuu)$ lies below (or to the right of) 
the diagonal region $\cDprime$, 
which holds if and only if $\nuu \leq  \ceil{\muu}$.  
See the left side of Figure \ref{fig:diag-expansion}.

Since the lattice $\Lambda^{(r)}$ has $(\frac{1}{r}, \frac{1}{r})$ as a generator, 
$\Lambda^{(r)}$ is invariant under translation by $\frac{j}{r}(1,1)$ for any integer $j$.
Hence in order for $\Lambda^{(r)}$ to be disjoint from $\cDprime$, 
it must also be disjoint with each translate $\frac{j}{r}(1,1) + \cDprime$, for integers $j=1,\ldots, r-1$.
The latter fact holds if and only if the other generator $(1+\frac{1}{r}\muu , \frac1{r}\nuu )$ 
of $\Lambda^{(r)}$ lies below the union of all translates $\frac{j}{r}(1,1) + \cDprime$,
which is equivalent to $\frac1{r}\nuu \leq \ceil{ \frac1{r}\muu }_{1/r}$.
See the right side of Figure \ref{fig:diag-expansion}.

Finally, we observe that for any fixed integer $r \ge 1$, the condition 
on $(\muu, \nuu)$ that
$\frac{1}{r}\nuu \leq \ceil{ \frac1{r}\muu }_{1/r}$ 
is equivalent to $\nuu \leq \ceil{\muu}$. 
This equivalence implies the equivalence of the conditions (S2') and (S3').
\end{proof}

%%%%%%%%%%%%%%%%%%%
%
% Figure 4.3 Lattices $\Lambda$ and $\Lambda^{(r)}$
%
%%%%%%%%%%%%%%%%%%%
\begin{figure}[h]
\begin{center}
\begin{tikzpicture}[scale=0.8]
  %\draw[-] (-1,0) -- (0,0);
  \draw[->] (1,0) -- (4.2,0) node[right] {$x$};
  \draw[->] (0,-1) -- (0,4.2) node[above] {$y$};
  \draw[scale=1] (1,0.1) -- (1,-0.1) node[below] {1};
  \draw[scale=1] (0.1,1) -- (-0.1,1) node[left] {1};
  
  %corner box
  \foreach \n in {-1,0,1,2,3} {
    \fill[gray,nearly transparent] (\n,\n+0.05) -- (\n,\n+1) -- (\n+0.95,\n+1) -- cycle;
    \fill[gray,nearly transparent] (\n+0.05,\n) -- (\n+1,\n) -- (\n+1,\n+0.95) -- cycle;
    %\fill[gray,nearly transparent] (\n,\n+0.05) -- (\n,\n+1) -- (\n+0.95,\n+1) -- cycle;
    %\fill[gray,nearly transparent] (\n+0.05,\n) -- (\n+1,\n) -- (\n+1,\n+0.95) -- cycle;
    \draw[-] (\n,\n) -- (\n,\n+1);
    \draw[-] (\n+1,\n+1) -- (\n+1,\n);
    \draw[dashed] (\n,\n+1) -- (\n+1,\n+1);
    \draw[dashed] (\n+1,\n) -- (\n,\n);
    \draw[dashed] (\n,\n) -- (\n+1,\n+1);
  }
  \foreach \n in {-3,-2,-1,0,1,3,4,5} {
    \node[draw,circle,inner sep=0.9pt,fill,red] at (-\n/5,\n/2) {};  }
  \foreach \n in {-1,0} {
    \node[draw,circle,inner sep=0.9pt,fill,red] at (-1-\n/5,-1+\n/2) {};  }
  \foreach \n in {-5,-4,-3,-2,-1,0,1,2,3,4,5,6} {
    \node[draw,circle,inner sep=0.9pt,fill,red] at (1-\n/5,1+\n/2) {};  }
  \foreach \n in {-7,-6,-5,-4,-2,-1,0,1,2,3,4,5} {
    \node[draw,circle,inner sep=0.9pt,fill,red] at (2-\n/5,2+\n/2) {};  }
  \foreach \n in {-5,-4,-3,-2,-1,0,1,2,3} {
    \node[draw,circle,inner sep=0.9pt,fill,red] at (3-\n/5,3+\n/2) {};  }
  \foreach \n in {-1,0,1} {
    \node[draw,circle,inner sep=0.9pt,fill,red] at (4-\n/5,4+\n/2) {};  }
  %\draw[-] (1+0.2,1) node[above left] {$(1,1)$};
  \draw[-] (6/5,1/2) node[right] {$(1+\muu,\nuu)$};
  \draw[-,red] (6/5-0.1,1/2-0.1) -- (1/5+0.1,-1/2+0.1);
  
\end{tikzpicture}
%%%%%%%
\qquad \qquad 
%%%%%%%
\begin{tikzpicture}[scale=0.8]
  \draw[-] (-1,0) -- (0,0);
  \draw[->] (1,0) -- (4.2,0) node[right] {$x$};
  \draw[->] (0,-1) -- (0,4.2) node[above] {$y$};
  \draw[scale=1] (1,0.1) -- (1,-0.1) node[below] {1};
  \draw[scale=1] (0.1,1) -- (-0.1,1) node[left] {1};
  %\draw[scale=1] (3,0.1) -- (3,-0.1) node[below] {3};
  %\draw[scale=1] (0.1,3) -- (-0.1,3) node[left] {3};
  
  % corner box
  \foreach \n in {-1,0,1,2,3} {
    \fill[gray,nearly transparent] (\n,\n+0.05) -- (\n,\n+1) -- (\n+0.95,\n+1) -- cycle;
    \fill[gray,nearly transparent] (\n+0.05,\n) -- (\n+1,\n) -- (\n+1,\n+0.95) -- cycle;
    \draw[-] (\n,\n) -- (\n,\n+1);
    \draw[-] (\n+1,\n+1) -- (\n+1,\n);
    \draw[dashed] (\n,\n+1) -- (\n+1,\n+1);
    \draw[dashed] (\n+1,\n) -- (\n,\n);
    \draw[dashed] (\n,\n) -- (\n+1,\n+1);
  }
  %hashlines inside corner rectangle
  %\foreach \n in {1,2} {
  %  \draw[dotted] (\n/3,0) -- (\n/3,1);
  %  \draw[dotted] (0,\n/3) -- (1,\n/3);
  %  
  %  \draw[dotted] (\n/3,\n/3) -- (\n/3-1,\n/3) -- (\n/3-1,\n/3-1) -- (\n/3,\n/3-1) -- cycle;
  %}
  %points of cyclic subgroup
  \foreach \n in {-2,-1,0,1,2,3,4,5,6,7,8,9,10,11} {
    \node[draw,circle,inner sep=0.4pt,fill,red] at (\n/3,\n/3) {};  }
  \foreach \n in {-3,-2,-1,0,1,2,3,4,5,6,7,8,9} {
    \node[draw,circle,inner sep=0.4pt,fill,red] at (-1/15+\n/3,1/6+2/3+\n/3) {};  }
  \foreach \n in {-2,-1,0,1,2,3,4,5,6,7} {
    \node[draw,circle,inner sep=0.4pt,fill,red] at (-2/15+\n/3,2/6+4/3+\n/3) {};  }
  \foreach \n in {-1,0,1,2,3,4,5} {
    \node[draw,circle,inner sep=0.4pt,fill,red] at (-3/15+\n/3,3/6+6/3+\n/3) {};  }
  \foreach \n in {-1,0,1,2} {
    \node[draw,circle,inner sep=0.4pt,fill,red] at (-4/15+\n/3,20/6+\n/3) {};  }
  \foreach \n in {-1} {
    \node[draw,circle,inner sep=0.4pt,fill,red] at (-5/15+\n/3,25/6+\n/3) {};  }
  \foreach \n in {0,1,2,3,4,5,6,7,8,9,10,11,12} {
    \node[draw,circle,inner sep=0.4pt,fill,red] at (1/15+\n/3,-1/6-2/3+\n/3) {};  }
  \foreach \n in {2,3,4,5,7,8,9,10,11} {
    \node[draw,circle,inner sep=0.4pt,fill,red] at (2/15+\n/3,-10/6+\n/3) {};  }
  \foreach \n in {5,6,7,9,10,11} {
    \node[draw,circle,inner sep=0.4pt,fill,red] at (3/15+\n/3,-15/6+\n/3) {};  }
  \foreach \n in {7,8,9,10,11} {
    \node[draw,circle,inner sep=0.4pt,fill,red] at (4/15+\n/3,-20/6+\n/3) {};  }
  \foreach \n in {10,11} {
    \node[draw,circle,inner sep=0.4pt,fill,red] at (5/15+\n/3,-25/6+\n/3) {};  }
    
  %label (1,1) point
  %\draw[-] (1-0.2,1+0.1) node[above] {$(1,1)$};
  \node[draw,circle,inner sep=0.9pt,fill,red] at (1,1) {};
  
  %"fundamental" corner-cutting segment
  \draw[-] (1+1/15,1/6+0.1) node[right] {$(1+\frac1r\muu,\frac1r\nuu)$};
  \node[draw,circle,inner sep=0.9pt,fill,red] at (1/15+1,-5/6+1) {};
  \draw[-,red] (16/15-0.1,1/6-0.1) -- (11/15+0.1,-1/6+0.1);
  
\end{tikzpicture}
\end{center}
\caption{Lattices $\Lambda$ and   $\Lambda^{(r)}$ with $r = 3$ in red, 
shown with punctured approximate diagonal region $\cDprime$ in gray.}
\label{fig:diag-expansion}
\end{figure}
%%%%%%%%%%%%%%%%%%%
%
% END Figure 4.3 Lattices $\Lambda$ and $\Lambda^{(r)}$
%
%%%%%%%%%%%%%%%%%%%

 The condition (a) of the next lemma provides a recipe for producing an infinite number of new rational solutions
 in $S$,
 starting from an initial rational  solution in $S$ associated to  $\Lambda_{\ell p,\ell q}$.
%%%%%%%%%%%%%%%%%%%
% Lemma 4.8  [formerl´ 4.9]
%%%%%%%%%%%%%%%%%%%
\begin{lem}
\label{lem:hyperbola-neg-sporadic}
Suppose $p,q\geq 1$ are coprime integers and 
$\ell>0$ is a real number  such that 
the lattice $\Lambda_{\ell p,\ell q}$ is disjoint from the set
\[  \cDprime = \bigcup_{n\in\ZZ} 
    \{ (x,y) : n\leq x \leq n+1, \, n < y < n+1, \,\text{ and }\, x\neq y\}. \]
\begin{enumerate}[(a)]
\item[(a)]
If $\ell > 0$ is rational,
then for any integer $r \geq 1$, the lattice $\Lambda_{\lambda p,\lambda q}$ with
\[ 
\lambda = \lambda_{r,\ell} = 1 + \frac1{r}\left(\ell - 1\right) 
\]
is disjoint from $\cDprime$.
(Note that when $r = 1$, $\lambda_{1,\ell} = \ell$.)

\item[(b)]
If $\ell$ is irrational,
then necessarily $\ell > 1$.
Conversely, if $\ell > 1$ is irrational then
the lattice $\Lambda_{\ell p,\ell q}$ is  disjoint from the set
$\cDprime$.

\end{enumerate}
\end{lem}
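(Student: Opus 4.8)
The plan is to treat the regime $\ell\geq 1$ by a direct geometric observation and the regime $0<\ell<1$ by reduction to Lemma~\ref{lem:diag-expansion} via the $(1,1)$-translate lattice $\Lambda^+$ of Proposition~\ref{prop:diagonal-neg}. The key observation behind both parts is: \emph{for any coprime $p,q\geq 1$, if $\ell\geq 1$ then $\Lambda_{\ell p,\ell q}$ is disjoint from $\cDprime$.} Indeed $\cDprime\subseteq\{(x,y):|x-y|<1\}$ by \eqref{eq:dprime-round}, while a point $(\ell p a,\ell q b)$ of $\Lambda_{\ell p,\ell q}$ either lies on the diagonal — hence outside $\cDprime$ — or satisfies $|\ell p a-\ell q b|=\ell\,|pa-qb|\geq\ell\geq 1$, since $\gcd(p,q)=1$ forces $|pa-qb|\geq 1$, and again lies outside $\cDprime$. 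Because $\lambda_{r,\ell}=1+\frac1r(\ell-1)\geq 1$ whenever $\ell\geq 1$, this disposes of part~(a) in that range and proves the converse assertion of part~(b).

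For the necessity of $\ell>1$ in part~(b), I would prove the contrapositive: if $\ell$ is irrational with $0<\ell<1$, then $\Lambda_{\ell p,\ell q}$ meets $\cDprime$. Choose integers with $pa-qb=1$; as $(a,b)$ ranges over all solutions, $\ell p a$ ranges over a coset of $\ell p q\,\ZZ$, which is dense modulo $1$ because $\ell pq$ is irrational. A short calculation from \eqref{eq:dprime-strict} (cf.\ Figure~\ref{fig:diagonal-neg}(b)) shows that such a point $(\ell p a,\ell q b)$, lying on the line $x-y=\ell\in(0,1)$, belongs to $\cDprime$ exactly when the fractional part of $\ell p a$ lies in $\{0\}\cup(\ell,1)$; density lets us realize this. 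Hence a disjoint irrational $\ell$ must have $\ell\geq 1$, so $\ell>1$.

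The substantive case is part~(a) with $0<\ell<1$ (the case $r=1$ is trivial, as $\lambda_{1,\ell}=\ell$, so take $r\geq 2$). By Proposition~\ref{prop:diagonal-neg}, $\Lambda_{\ell p,\ell q}$ is disjoint from $\cDprime$ iff $\Lambda^+_{\ell p,\ell q}$ is. I would then present $\Lambda^+_{\ell p,\ell q}$ — after normalizing so that its diagonal-direction primitive generator is $(1,1)$ — in the row form $\rowspan_\ZZ\begin{pmatrix}1+\mu&\nu\\ 1&1\end{pmatrix}$ with $1+\mu>\nu$, taking for the second generator a non-diagonal lattice point closest to the diagonal. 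Lemma~\ref{lem:diag-expansion} then yields that the contracted lattice $\Lambda^{(r)}=\rowspan_\ZZ\begin{pmatrix}1+\frac{\mu}{r}&\frac{\nu}{r}\\ \frac1r&\frac1r\end{pmatrix}$ is still disjoint from $\cDprime$. To finish one checks the inclusion $\Lambda^+_{\lambda_{r,\ell}\,p,\ \lambda_{r,\ell}\,q}\subseteq\Lambda^{(r)}$: granting it, since an overlattice disjoint from $\cDprime$ forces every sublattice to be disjoint, $\Lambda^+_{\lambda_{r,\ell}p,\,\lambda_{r,\ell}q}$ is disjoint from $\cDprime$, and Proposition~\ref{prop:diagonal-neg} once more gives the claim for $\Lambda_{\lambda_{r,\ell}p,\,\lambda_{r,\ell}q}$.

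I expect the main obstacles to be, first, the normalization step — the diagonal spacing of $\Lambda^+_{\ell p,\ell q}$ equals $1$ precisely when the denominator of $\ell pq$ is $1$, so in general one must either quotient it out first or invoke a scaled variant of Lemma~\ref{lem:diag-expansion} — and second, the inclusion $\Lambda^+_{\lambda_{r,\ell}p,\,\lambda_{r,\ell}q}\subseteq\Lambda^{(r)}$, which comes down to computing $(\mu,\nu)$ explicitly in terms of $\ell,p,q$ and matching against the generators $(\lambda_{r,\ell}p,0),\,(0,\lambda_{r,\ell}q),\,(1,1)$. The conceptual point that makes this go through is the identity $\ell-1=r(\lambda_{r,\ell}-1)$ — that is, $\lambda_{r,\ell}$ is $\ell$ contracted toward the common fixed point $1$ by the factor $r$ — which is exactly what aligns the generator passage $(1+\mu,\nu)\mapsto(1+\frac\mu r,\frac\nu r)$ of Lemma~\ref{lem:diag-expansion} with $\ell\mapsto\lambda_{r,\ell}$. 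An alternative that avoids bases altogether is to reformulate ``$\Lambda_{\ell p,\ell q}$ disjoint from $\cDprime$'' directly as a finite system of constraints on the fractional parts of $\ell d_j$ (with $d_j\equiv 0\bmod p$, $d_j\equiv j\bmod q$, for $1\leq j<1/\ell$), whose forbidden arcs scale linearly with $j$ and with $\ell$, and to verify this system is preserved under $\ell\mapsto\lambda_{r,\ell}$.
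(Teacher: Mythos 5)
Your split into $\ell\geq 1$ and $0<\ell<1$ is a clean simplification; the paper's proof treats all cases uniformly through the $\Lambda^+$ formalism. Your direct observation for $\ell\geq 1$ (coprimality of $p,q$ forces $|pa-qb|\geq 1$, hence $|\ell pa-\ell qb|\geq\ell\geq 1$ for off-diagonal points of $\Lambda_{\ell p,\ell q}$) is correct and disposes of part~(a) in that range and the converse of part~(b) at once; it is the argument the paper uses in Lemma~\ref{lem:hyperbola-neg}(iii) for the half-line case. Your contrapositive argument for necessity in~(b) --- find a lattice point on the line $x-y=\ell$ whose first coordinate has fractional part in $(\ell,1)$, using density of $\ell pq\ZZ$ mod~$1$ --- is also correct and is essentially a pointwise restatement of the paper's computation of the closure $\overline{\Lambda^+_{\ell p,\ell q}}$.

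The gap is exactly where you flag it, in the substantive case $0<\ell<1$, $r\geq 2$, and it is a real one. You cannot ``present $\Lambda^+_{\ell p,\ell q}$ in the row form $\rowspan_\ZZ\begin{pmatrix}1+\muu&\nuu\\ 1&1\end{pmatrix}$ after normalizing'': the primitive diagonal generator of $\Lambda^+_{\ell p,\ell q}$ is $(\tfrac1s,\tfrac1s)$ with $\tfrac1s=\egcd(1,\ell pq)$, and rescaling the lattice to fix this destroys disjointness from the fixed set $\cDprime$. The resolution the paper uses, which you have not supplied, is to observe that $\Lambda^+_{\ell p,\ell q}$ \emph{already is} $\Lambda^{(s)}$ for a suitable lattice $\Lambda=\rowspan_\ZZ\begin{pmatrix}1+\muu&\nuu\\ 1&1\end{pmatrix}$ with diagonal spacing~$1$ (take $\muu=s(m_0\ell p-1)$, $\nuu=sn_0\ell q$ where $m_0p-n_0q=1$), and then to apply Lemma~\ref{lem:diag-expansion} \emph{in both directions}: (S3')$\Rightarrow$(S2') first, to conclude $\Lambda$ itself is disjoint from $\cDprime$, and then (S2')$\Rightarrow$(S3') at parameter $rs$ to conclude $\Lambda^{(rs)}$ is disjoint. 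The final containment is the explicit $2\times 2$ integer-matrix identity \eqref{eq:58}, which shows $\Lambda_{\lambda p,\lambda q}\subseteq\Lambda^{(rs)}$. As written, your inclusion check $\Lambda^+_{\lambda p,\lambda q}\subseteq\Lambda^{(r)}$ targets the wrong contraction; the parameter must be $rs$, not $r$, precisely because the diagonal spacing mismatch is $s$, and without the two-way use of Lemma~\ref{lem:diag-expansion} there is no lattice $\Lambda^{(r)}$ in play whose disjointness has been established. Your alternative fractional-part reformulation is plausible in spirit but is sketched too loosely to certify; in particular the set of forbidden indices $j$ with $1\leq j<1/\ell$ shrinks when $\ell$ is replaced by $\lambda_{r,\ell}>\ell$, so ``the system is preserved'' needs an actual argument.
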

\begin{proof}
Set $\uu = \ell p$, $\vv = \ell q$. 
Note that $\ell = \egcd(\uu,\vv) $.
Recall that $\Lambda_{\ell p, \ell q} = \Lambda_{\uu,\vv}$ denotes 
the image of 
$(m,n) \mapsto (m\uu, n\vv) $
for $(m,n) \in \ZZ^2$.
Equivalently, $\Lambda_{\uu,\vv}$ is the integer row span 
of the diagonal matrix
\begin{align*}
\Lambda_{\uu,\vv} = \rowspan_\ZZ\begin{pmatrix}
\uu & 0 \\
0 & \vv
\end{pmatrix}.
%\ZZ^2 &\to \RR^2 \\
%(m,n) &\mapsto (m\uu, n\vv) .
\end{align*}
We first find a second basis for $\Lambda_{\uu,\vv}$ which is convenient
for comparison with $\cDprime$.
Namely, we choose one basis vector to lie on the main diagonal.
Let $\Delta = \{(x,x) : x\in \RR\}$ denote the  diagonal of $\RR^2$.

Let $m_0, n_0 \geq 1$ be integers such that $m_0 p - n_0 q = 1$.\medskip

{\bf Claim.}
{\em  The lattice $\Lambda_{\uu,\vv}$ is equal to
$\rowspan_\ZZ\begin{pmatrix}
m_0 \uu & n_0 \vv \\
q \uu & p \vv
\end{pmatrix} .$
}

{\em Proof}. Note that
\[ 
\begin{pmatrix}
m_0 \uu & n_0 \vv \\
q \uu & p \vv
\end{pmatrix} = 
\begin{pmatrix}
m_0  & n_0  \\
q  & p 
\end{pmatrix}\begin{pmatrix}
 \uu & 0  \\
 0 &  \vv
\end{pmatrix}.
\]
The integer change of basis matrix
$ \begin{pmatrix}
m_0  & n_0  \\
q  & p 
\end{pmatrix}$
has determinant $1$.
This proves the claim.

Now $\Lambda_{\uu,\vv}$ is disjoint from $\cDprime$ by hypothesis.
Hence
the implication (P2') $\Leftrightarrow$ (P3') in Proposition \ref{prop:diagonal-neg}
implies that $\Lambda_{\uu,\vv}^+$ is disjoint from $\cDprime$
where
\begin{align*}
\Lambda_{\uu,\vv}^+ = \rowspan_\ZZ\begin{pmatrix}
\uu & 0 \\
0 & \vv \\
1 & 1
\end{pmatrix}.
\end{align*}
The above claim implies that
 $\Lambda^+_{\uu,\vv}$ is equal to the $\ZZ$-row span of  vectors %in $\RR^2$,
\[ \Lambda^+_{\uu,\vv} = \rowspan_\ZZ \begin{pmatrix}
m_0\uu & n_0\vv \\
q\uu & p\vv \\
1 & 1 \end{pmatrix}= \rowspan_\ZZ \begin{pmatrix}
m_0\uu & n_0\vv \\
\ell pq & \ell pq \\
1 & 1 \end{pmatrix} .\]

(a)
Suppose first that $\ell$ is rational.
Since the second and third rows are on the diagonal $\Delta$ while the first row is off the diagonal, 
we have
\[ \Lambda^+_{\uu,\vv} = \rowspan_\ZZ \begin{pmatrix}
m_0\uu & n_0\vv \\
\egcd(1, \ell pq) & \egcd(1,\ell pq) \end{pmatrix}\]
by definition of the extended greatest common divisor,
and the assumption that $\ell$ is rational. % so $\egcd(1,\ell pq)>0$.
Let $s$ be the positive integer such that $\frac{1}{s} = \egcd({1},{\ell pq})$.

Next we apply Lemma \ref{lem:diag-expansion}
with  $\muu = s(m_0\uu -1)$ and $\nuu = sn_0\vv$,
so
\[ 
\Lambda^{(s)}
:= \rowspan_\ZZ\begin{pmatrix}
1+ (m_0 \uu-1) & n_0\vv \\
 \frac1{s}  & \frac1{s} \end{pmatrix}
 = \Lambda^+_{\uu,\vv} .
\]
Since $\Lambda^{(s)}$ is disjoint from $\cDprime$,
the implication (S3') $\Rightarrow$ (S2')
of Lemma \ref{lem:diag-expansion}
guarantees that
\[ 
\Lambda := \rowspan_\ZZ\begin{pmatrix}
1+ s(m_0 \uu-1) & sn_0\vv \\
 1  & 1 \end{pmatrix}
\]
is disjoint from $\cDprime$.
But then (S2') $\Rightarrow$ (S3') implies that
\[ 
\Lambda^{(rs)} := \rowspan_\ZZ\begin{pmatrix}
1-\frac{1}{r}+ \frac{1}{r}m_0 \uu & \frac{ 1}{r}n_0\vv \\
 \frac{1}{rs}  & \frac{1}{rs} \end{pmatrix}.
 \]
 is disjoint from $\cDprime$
 for any integer $r\geq 1$.

Finally, to see that the lattice $\Lambda_{\lambda p, \lambda q}$ is disjoint from $\cDprime$
it suffices to show that 
$\Lambda_{\lambda p, \lambda q}$
is contained in 
$\Lambda^{(rs)}$,
i.e.  it suffices to verify  that
\begin{align*} 
\Lambda^{(rs)} \supset \Lambda_{\lambda p, \lambda q} :=& \rowspan_\ZZ\begin{pmatrix}
\lambda p & 0  \\
0 &  \lambda q 
\end{pmatrix} 
\end{align*}
Recall that $\lambda = 1 + \frac1{r}(\ell-1)$;
this containment follows from the matrix relation
\begin{equation}
\label{eq:58}
 \begin{pmatrix}
\lambda  p & 0 \\
0 &  \lambda q
\end{pmatrix}
=  \begin{pmatrix}
p & - n_0 s \ell pq \\
-q & (r-1)sq + m_0 s \ell pq   
\end{pmatrix} 
\begin{pmatrix}
1- \frac{1}{r}+ \frac{1}{r}m_0 \ell p & \frac{1}{r}n_0 \ell q \\
 \frac{1}{rs} & \frac{1}{rs} \end{pmatrix} . 
\end{equation}
Here we recall that $m_0 p - n_0 q = 1$, and that $s \ell pq$ is an integer 
by the choice $\frac1s = \egcd(1, \ell pq)$.
Thus the second matrix in  \eqref{eq:58} has integer entries.
This shows that $\Lambda_{\lambda p, \lambda q} \subset \Lambda^{(rs)}$
so $\Lambda_{\lambda p, \lambda q}$ is disjoint from $\cDprime$ as desired.

(b)
Now suppose $\ell$ is irrational.
As above, we consider when $\cDprime$ is disjoint from
\[ 
\Lambda^+_{\uu,\vv} = 
\rowspan_\ZZ \begin{pmatrix}
\ell m_0p & \ell n_0q \\
\ell pq & \ell pq \\
1 & 1 \end{pmatrix} .\]
%where $m_0, n_0 \geq 1$ are integers such that $m_0 p - n_0 q = 1$.
When $\ell$ is irrational, the set $\ell pq \ZZ + \ZZ$ is dense in $\RR$.
Hence the topological closure of $\Lambda^+_{\ell p, \ell q}$ inside $\RR^2$ 
contains the diagonal line $\Delta$ of $\RR^2$.
Using this observation it is straightforward to verify that 
the closure of $\Lambda^+_{\ell p, \ell q}$ is
\begin{align*} 
\overline{\Lambda^+_{\ell p, \ell q}} 
  &= \{(x,y) \in \RR^2 : x - y \in \ell \ZZ\} .
\end{align*}
If $0< \ell < 1$, then the closure $\overline{\Lambda^+_{\ell p, \ell q}}$ intersects the interior of $\cDprime$
(at a point with  $x-y = \pm \ell$)
so ${\Lambda^+_{\ell p, \ell q}}$ must also intersect $\cDprime$.
On the other hand if $\ell > 1$, then $\overline{\Lambda^+_{\ell p, \ell q}}$ is disjoint from $\cDprime$ so 
 ${\Lambda^+_{\ell p, \ell q}}$ is also disjoint from  $\cDprime$.
\end{proof}

%%%%%%%%%%%%%%%%%%%%%%%%%%%%%%%%%%%
%
% Subsection 4.5   Disjointness of Lattice and Regions Criteria: Sporadic Cases
%
%%%%%%%%%%%%%%%%%%%%%%%%%%%%%%%%%%%%%%%%
\subsection{Disjointness of lattices and regions: part 2}
\label{sec:disjoint2}

 We prove two  further lemmas  needed  for the necessity proof in Section \ref{sec:neg-necess-proof}, 
 which are not needed in the sufficiency proof in Section \ref{sec:negative-S}.
 The first  shows that every rational lattice $\Lambda_{\lambda p, \lambda q}$
 which is associated to a solution in $S$ 
 is associated  to some rational lattice $\Lambda_{\ell p, \ell q}$
 associated to a case $(i)$ solution in $S$ which is disjoint from $\cD \cup \cDprime$.
  Lemma \ref{lem:diagonal-vert-boundary} characterizes all such lattices.

%%%%%%%%%%%%%%%%%%%
% Lemma 4.9(formerl 6.6) 
%%%%%%%%%%%%%%%%%%%

\begin{lem}
\label{lem:neg-sporadic-converse}
Suppose that $p,q$ are coprime positive integers and $\lambda > 0$ is rational such that the lattice
$\Lambda_{\lambda p,\lambda q} $
is disjoint from $\cDprime$. 
Then there necessarily exists some positive integer $r$ such that for 
\[ 
\ell = \ell_{r,\lambda} =   1 + r(\lambda - 1),
\]
the lattice $\Lambda_{\ell p, \ell q}$ is disjoint from $\cD \cup \cDprime$.
\end{lem}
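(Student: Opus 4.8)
The plan is to treat this lemma as a converse to Lemma~\ref{lem:hyperbola-neg-sporadic}(a), running its construction ``in reverse'': from the hypothesis, the lattice $\Lambda^+_{\lambda p,\lambda q}$ carries a short diagonal generator $\frac1s(1,1)$, and the \emph{expansion} step of Lemma~\ref{lem:diag-expansion} (used there in the contracting direction) inflates that generator back to $(1,1)$, producing a lattice that contains $\Lambda_{\ell p,\ell q}$ and is disjoint from $\cD\cup\cDprime$. Concretely, set $\uu=\lambda p$, $\vv=\lambda q$; fix integers $m_0,n_0\ge1$ with $m_0p-n_0q=1$; and let $s\ge1$ be the integer with $1/s=\egcd(1,\lambda pq)$, i.e.\ the denominator of $\lambda pq$ in lowest terms. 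The integer to be produced will be $r=s$, with $\ell=1+s(\lambda-1)$, and one notes $\ell pq=pq(1-s)+s\lambda pq\in\ZZ$ since $s\lambda pq\in\ZZ$. By the hypothesis and the implication (P2')$\Rightarrow$(P3') of Proposition~\ref{prop:diagonal-neg}, $\Lambda^+_{\uu,\vv}$ is disjoint from $\cDprime$, and, exactly as in the proof of Lemma~\ref{lem:hyperbola-neg-sporadic}(a), rationality of $\lambda$ gives
\[
\Lambda^+_{\uu,\vv}=\rowspan_\ZZ\begin{pmatrix} m_0\lambda p & n_0\lambda q \\ 1/s & 1/s\end{pmatrix}.
\]

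The step I expect to be the main obstacle — it is the only point where the disjointness hypothesis is used in an essential, not merely formal, way — is to show $\ell>0$, equivalently $s(1-\lambda)<1$. For $\lambda\ge1$ this is immediate; for $0<\lambda<1$ I would argue by contradiction. Assume $s(1-\lambda)\ge1$ (which forces $s\ge2$, as $\lambda>0$), and consider the lattice points $w_t:=\bigl(\lambda p(m_0+tq),\ \lambda q(n_0+tp)\bigr)\in\Lambda_{\lambda p,\lambda q}$, $t\in\ZZ$. Using $m_0p-n_0q=1$ one gets $x(w_t)-y(w_t)=\lambda$ for all $t$, while $y(w_t)=\lambda q n_0+t\lambda pq$ runs, modulo $1$, over a full coset of the order-$s$ cyclic subgroup $\frac1s\ZZ/\ZZ$ of $\RR/\ZZ$. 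Every such coset meets the interval $(0,1/s]$, and $(0,1/s]\subseteq(0,1-\lambda]$ by the assumption $s(1-\lambda)\ge1$; hence some $w_t$ has $\{y(w_t)\}\in(0,1-\lambda]$, and then, with $n=\floor{y(w_t)}$, we have $n<y(w_t)<n+1$ and $n<x(w_t)=y(w_t)+\lambda\le n+1$ with $x(w_t)\ne y(w_t)$, so $w_t\in\cDprime$ — a contradiction. This proves $\ell>0$, which is what makes the hypothesis $1+\muu>\nuu$ of Lemma~\ref{lem:diag-expansion} available.

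To finish, put $\muu=s(m_0\lambda p-1)$ and $\nuu=sn_0\lambda q$; then $1+\muu-\nuu=1-s+s\lambda(m_0p-n_0q)=\ell>0$, and the contracted lattice $\rowspan_\ZZ\left(\begin{smallmatrix} 1+\frac1s\muu & \frac1s\nuu \\ \frac1s & \frac1s\end{smallmatrix}\right)$ equals $\Lambda^+_{\uu,\vv}$ by the formula above, hence is disjoint from $\cDprime$. For $s\ge2$ this is exactly condition (S3') of Lemma~\ref{lem:diag-expansion} with $r=s$, so (S1') holds: the lattice $\Lambda:=\rowspan_\ZZ\left(\begin{smallmatrix} 1+\muu & \nuu \\ 1 & 1\end{smallmatrix}\right)$ is disjoint from $\cD\cup\cDprime$. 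For $s=1$ one has $\ell=\lambda$ and $\Lambda=\Lambda^+_{\uu,\vv}$ already contains the diagonal generator $(1,1)$, so $\Lambda$ meets $\Delta$ only in $\ZZ^2$ and hence avoids $\cD\smallsetminus\cDprime=\Delta\smallsetminus\ZZ^2$ as well (equivalently, invoke the implication (S2')$\Rightarrow$(S1') of Lemma~\ref{lem:diag-expansion}, whose proof needs only $1+\muu>\nuu$). Finally the matrix identity
\[
\begin{pmatrix} \ell p & 0 \\ 0 & \ell q\end{pmatrix}=\begin{pmatrix} p & -n_0 s\lambda pq \\ -q & q(1-s)+m_0 s\lambda pq\end{pmatrix}\begin{pmatrix} 1+\muu & \nuu \\ 1 & 1\end{pmatrix},
\]
which holds because $m_0p-n_0q=1$ and whose first factor on the right-hand side is an integer matrix (as $s\lambda pq\in\ZZ$), shows $\Lambda_{\ell p,\ell q}\subseteq\Lambda$; therefore $\Lambda_{\ell p,\ell q}$ is disjoint from $\cD\cup\cDprime$, completing the argument with $r=s$.
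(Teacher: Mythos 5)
Your proof is correct and follows the same route as the paper's: use (P2')\,$\Rightarrow$\,(P3') to pass to $\Lambda^+_{\lambda p,\lambda q}$, rewrite it in the basis $\bigl((m_0\lambda p, n_0\lambda q),\,\tfrac1s(1,1)\bigr)$ with $1/s=\egcd(1,\lambda pq)$, apply Lemma~\ref{lem:diag-expansion} in the expansion direction (S3')\,$\Rightarrow$\,(S1'), and finish with the matrix identity showing $\Lambda_{\ell p,\ell q}\subseteq\Lambda$. The one place you go beyond the paper is worth flagging: the paper invokes Lemma~\ref{lem:diag-expansion} without verifying its hypothesis $1+\muu>\nuu$, which in this setting is exactly $\ell>0$ and is \emph{not} automatic when $0<\lambda<1$; your lattice-point argument (choose $t$ so that $\{y(w_t)\}\in(0,1/s]\subseteq(0,1-\lambda]$, forcing $w_t\in\cDprime$) is a correct and genuinely needed verification. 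You also correctly handle the boundary case $s=1$, which falls outside Lemma~\ref{lem:diag-expansion} as stated (it assumes $r\ge2$), by noting directly that $\Lambda^+_{\uu,\vv}$ meets the diagonal only in $\ZZ^2$. In short: same method, but you have closed two small gaps that the paper's write-up leaves implicit.
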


\begin{proof}
Assume the same notation as in the proof of Lemma~\ref{lem:hyperbola-neg-sporadic}.
By the argument there, $\cDprime$ is disjoint from 
\[ \Lambda^+_{\lambda p,\lambda q} = \rowspan_\ZZ \begin{pmatrix}
\lambda m_0 p & \lambda n_0 q  \\
\egcd(1,\lambda pq) & \egcd(1, \lambda pq) 
\end{pmatrix} .\]
Let $r$ denote the positive integer which satisfies $\frac{1}{r} = \egcd(1, \lambda pq)$.
(Recall that $\lambda$ is rational.)

Now we apply Lemma \ref{lem:diag-expansion}:
let $\Lambda^{(r)} = \Lambda^+_{\lambda p,\lambda q}$ 
so that $1 + \frac{1}{r}\muu = m_0\lambda p$ and $\frac{1}{r}\nuu = n_0\lambda q$
and 
(S3') holds,
i.e. $\Lambda^{(r)}$ is disjoint from $\cDprime$.
Then (S1') holds, namely
$$
 \Lambda := \rowspan_\ZZ\begin{pmatrix}
1 - r + \lambda m_0 pr & \lambda n_0  qr \\
1 & 1
\end{pmatrix} 
$$
is disjoint from $\cD \cup \cDprime$.
It remains to check that 
$\Lambda \supset \Lambda_{\ell p, \ell q}$
where $\ell = 1 + r(\lambda - 1)$, 
i.e. that
\[ \Lambda \supset \rowspan_\ZZ\begin{pmatrix}
(1 - r+ \lambda r)p & 0  \\
0 & (1 - r + \lambda r)q 
\end{pmatrix} .\]
This amounts to the calculation
\begin{align*}
%%%%%%%%%%%%%%%%% 
%\begin{pmatrix}
%\ell p & 0 \\
%0 & \ell q 
%\end{pmatrix} &= 
%%%%%%%%%%%%%%%%%%
\begin{pmatrix}
(1 - r+ \lambda r)p & 0  \\
0 & (1 - r + \lambda r)q 
\end{pmatrix}  
&= \begin{pmatrix}
p & - \lambda n_0 pqr \\
-q & q - rq + \lambda m_0 pqr
\end{pmatrix}
\begin{pmatrix}
1 - r + m_0\lambda pr  & n_0 \lambda qr \\
1 & 1
\end{pmatrix},
\end{align*}
where we note that $\lambda pqr $ is an integer by the choice 
$\frac{1}{r} = \egcd(1, \lambda pq)$.
This proves the disjointness of $\Lambda_{\ell p, \ell q}$ and $\cD\cup \cDprime$ as desired.
\end{proof}

The next lemma 
%used in Section \ref{sec:neg-necess-proof}, 
shows that disjointness from $\cD\cup \cDprime$  requires
that $(\uu, \vv)$ must lie  on a rectangular hyperbola or on a horizontal line. 

%%%%%%%%%%%%%%%%%%%
% Lemma 4.10 [formerly 4.7]
%%%%%%%%%%%%%%%%%%%
\begin{lem}[Combined lattice/diagonal   disjointness classification]
\label{lem:diagonal-vert-boundary}
For rational parameters $\uu, \vv> 0$ the following conditions are equivalent.
\begin{enumerate}
\item[\textnormal{(M1')}] 
The lattice 
$\Lambda_{\uu,\vv} = \rowspan_\ZZ
\begin{pmatrix}
\uu & 0 \\ 
0 & \vv \end{pmatrix}$
 is disjoint from 
$\cD\cup \cDprime$.

\item[\textnormal{(M2')}] 
There exist integers $m\geq 0$, $n\geq 1$ such that
  \begin{equation*}
  \frac{m}{\uu} + \frac{n}{\vv} = 1.
  \end{equation*}
\end{enumerate}
\end{lem}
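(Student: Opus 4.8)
The plan is to reduce, exactly as in the proofs of Lemma~\ref{lem:hyperbola-neg-sporadic} and Lemma~\ref{lem:diag-expansion}, to a statement about the enlarged lattice $\Lambda^+_{\uu,\vv}$ in a basis one of whose generators lies on the diagonal $\Delta = \{(t,t) : t\in\RR\}$. Write $\ell = \egcd(\uu,\vv) > 0$, so $\uu = \ell p$, $\vv = \ell q$ with $p,q\ge 1$ coprime, and fix integers $m_0, n_0\ge 1$ with $m_0 p - n_0 q = 1$. By the change-of-basis computation in the proof of Lemma~\ref{lem:hyperbola-neg-sporadic},
\[
\Lambda^+_{\uu,\vv} \;=\; \rowspan_\ZZ \begin{pmatrix} m_0\uu & n_0\vv \\ \ell pq & \ell pq \\ 1 & 1 \end{pmatrix}.
\]
Since $\cD\cup\cDprime$ is invariant under translation by $(1,1)$ and $\Lambda_{\uu,\vv}\subseteq\Lambda^+_{\uu,\vv}=\Lambda_{\uu,\vv}+\ZZ(1,1)$, condition \textnormal{(M1')} is equivalent to $\Lambda^+_{\uu,\vv}\cap(\cD\cup\cDprime)=\emptyset$. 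I would first dispose of the case $\ell pq\notin\ZZ$: then \textnormal{(M2')} fails, because $\frac{m}{\uu}+\frac{n}{\vv}=1$ forces $mq+np=\ell pq\in\ZZ$; and \textnormal{(M1')} fails, because $\Lambda^+_{\uu,\vv}\cap\Delta$ is the group generated by $(\ell pq,\ell pq)$ and $(1,1)$, which (as $\ell pq$ is rational) equals $\frac1s\ZZ\cdot(1,1)$ with $s\ge 2$ the denominator of $\ell pq$, so $(\frac1s,\frac1s)\in\Lambda^+_{\uu,\vv}\cap\cD$. Hence we may assume $L:=\ell pq\in\ZZ$; then $\uu=L/q$, $\vv=L/p$, and $\Lambda^+_{\uu,\vv}=\rowspan_\ZZ\begin{pmatrix} m_0\uu & n_0\vv \\ 1 & 1 \end{pmatrix}$.

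With $L\in\ZZ$ the only points of $\Lambda^+_{\uu,\vv}$ on $\Delta$ are the integer points $\ZZ(1,1)$, which lie outside $\cD\cup\cDprime$; since $\cD\cup\cDprime$ and $\cDprime$ differ only along $\Delta$, disjointness of $\Lambda^+_{\uu,\vv}$ from $\cD\cup\cDprime$ is the same as disjointness from $\cDprime$. Next I would invoke the computation from the proof of Lemma~\ref{lem:diag-expansion} (valid here because $m_0\uu-n_0\vv=\ell(m_0 p-n_0 q)=\ell>0$), which shows that $\rowspan_\ZZ\begin{pmatrix} m_0\uu & n_0\vv \\ 1 & 1 \end{pmatrix}$ is disjoint from $\cDprime$ if and only if $n_0\vv\le\lceil m_0\uu\rceil-1$. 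Substituting $\uu=L/q$, $\vv=L/p$, writing $\lceil m_0 L/q\rceil=(m_0 L+r)/q$ with $r\equiv -m_0 L\pmod q$ and $0\le r<q$, clearing denominators, and using $m_0 p-n_0 q=1$, this inequality reduces to $p(q-r)\le L$.

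Finally I would match $p(q-r)\le L$ with \textnormal{(M2')}. For $m\ge 0$ and $n\ge 1$, the equation $mq+np=L$ is solvable if and only if some $n\ge 1$ satisfies $np\le L$ and $np\equiv L\pmod q$; the least such $n$ is the unique $n_*\in\{1,\dots,q\}$ with $n_* p\equiv L\pmod q$, so \textnormal{(M2')} holds if and only if $n_* p\le L$. Since $m_0\equiv p^{-1}\pmod q$, one has $q-r\equiv m_0 L\equiv p^{-1}L\equiv n_*\pmod q$, and both $q-r$ and $n_*$ lie in $\{1,\dots,q\}$, so $q-r=n_*$ and thus $p(q-r)\le L\iff p n_*\le L$, giving \textnormal{(M1')}$\iff$\textnormal{(M2')}. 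The step I expect to be the crux is the one in the second paragraph: recognizing that lattice disjointness collapses to the single ceiling inequality $n_0\vv\le\lceil m_0\uu\rceil-1$, and that clearing denominators there — the point being the identity $m_0 p L-n_0 q L=L$ — converts it exactly into the Diophantine solvability of $mq+np=L$. Bookkeeping of which of $p,q$ plays which role, and the degenerate case $\ell pq\notin\ZZ$, also need care.
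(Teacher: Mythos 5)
Your proof is correct, and it takes a genuinely different route from the paper's. The paper proves (M2')$\Rightarrow$(M1') by invoking Part~I results (Theorem 5.4, Lemma 5.5, and Proposition 5.2 of \cite{LagR:2018a}), and proves (M1')$\Rightarrow$(M2') by splitting on whether $\uu\in\ZZ$: for $\uu\notin\ZZ$ it again invokes Part~I's Proposition 5.2 (disjointness from $\cD$ alone) and observes $n\geq 1$ is forced, while for $\uu\in\ZZ$ it gives a short ad hoc argument that $\lambda=\egcd(\uu,\vv)\geq1$ and then exhibits $m=\uu-p$, $n=q$ explicitly. You instead pass to $\Lambda^+_{\uu,\vv}$, change basis so one generator lies on $\Delta$, dispose of the degenerate case $\ell pq\notin\ZZ$ (both sides fail), and then reduce the disjointness of $\Lambda^+_{\uu,\vv}=\rowspan_\ZZ\begin{pmatrix}m_0\uu & n_0\vv\\ 1&1\end{pmatrix}$ from $\cDprime$ to the single ceiling inequality $n_0\vv\leq\ceil{m_0\uu}-1$ via the observation inside the proof of Lemma~\ref{lem:diag-expansion} (whose hypothesis $1+\muu>\nuu$ you correctly check, since $m_0\uu-n_0\vv=\ell>0$); clearing denominators and using $m_0p-n_0q=1$ converts that into the solvability of $mq+np=\ell pq$ with $m\geq0$, $n\geq 1$. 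This is a uniform, computational chain of equivalences (so both implications fall out at once), more self-contained within the present paper, and it surfaces the Diophantine/Frobenius flavor alluded to in the introduction more directly than the paper's proof does. The trade-off is that the paper's (M2')$\Rightarrow$(M1') direction, by leaning on Part~I, works without assuming rationality (as the paper explicitly notes), whereas your argument is rational from the outset---but rationality is assumed in the lemma anyway, so nothing is lost. One small point worth making explicit when writing this up: the identification $\Lambda^+_{\uu,\vv}\cap\Delta=\ZZ(\ell pq,\ell pq)+\ZZ(1,1)$ should be justified by noting that the off-diagonal generator $(m_0\uu,n_0\vv)$ has $m_0\uu-n_0\vv=\ell\neq 0$, so its coefficient must vanish for a combination to lie on $\Delta$.
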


\begin{proof}
(M2')$ \Rightarrow $(M1')
Suppose there exist integers $m\geq 0, n\geq 1$ such that $\frac{m}{\uu} + \frac{n}{\vv}=1$. 
By % applying first 
\cite[Theorem 5.4]{LagR:2018a}
we have the relation $[f_{1/\uu},f_{\vv/\uu}] \geq 0$,
and then the equivalence
 \cite[Proposition 5.2]{LagR:2018a}  
implies
$\Lambda_{\uu,\vv}$ is disjoint from $\cD$. 
%By Theorem \ref{thm:neg-sufficient} we have the relation $[f_{-\vv/\uu},f_{-1/\uu}] \geq 0$,
%and the equivalence in Proposition \ref{prop:diagonal-neg} 
The arguments used in proving \cite[Theorem 5.4 and Lemma 5.5]{LagR:2018a}
also imply that $\Lambda_{\uu,\vv}$
 is disjoint from $\cDprime$. 
Thus (M1') holds.
In this direction, it is not necessary to assume that $\uu, \vv$ are rational.

(M1')$ \Rightarrow $(M2')
Now suppose that $\Lambda_{\uu,\vv}$ is disjoint from $\cD\cup\cDprime$.

{\em Case 1.} 
Suppose that $\uu$ is not an integer. 
By hypothesis $\Lambda_{\uu,\vv}$ is disjoint from $\cD$, 
so by \cite[Proposition 5.2]{LagR:2018a}
there  exist integers $m,n\geq 0$ such that
$ \frac{m}{\uu} + \frac{n}{\vv} = 1$.
Since $\uu$ is not an integer
we must have $n\geq 1$ in any such relation so (M2') holds.

{\em Case 2.}
Suppose $\uu$ is an integer. Then set
 $\lambda := \egcd(\uu, \vv)$.
Since $\uu,\vv$ are both rational, 
$\lambda$ is a positive rational number.
We claim that $\lambda \geq 1$.

By definition of extended greatest common divisor, there exist integers $m_0, n_0$ such that
${ m_0 \uu - n_0 \vv = \lambda }$.
Consider the lattice point 
$$(m_0 \uu, n_0 \vv)  = (m_0 \uu, m_0 \uu - \lambda) \in \Lambda_{\uu,\vv} .$$
Since $m_0 \uu$ is an integer, 
the region $\cDprime$ contains the open segment
$$\{ (m_0 \uu, y) : m_0 \uu -1 < y < m_0 \uu\} \subset \cDprime.$$
Thus the hypothesis that $\Lambda_{\uu,\vv}$ is disjoint from $\cDprime$
implies that $\lambda \geq 1$ as claimed.

Let $p,q$ be the coprime integers
such that $(\uu,\vv) = (\lambda p, \lambda q)$,
and let
$m = \uu - p = \lambda p - p$ and $n = q \geq 1$.
Here $m$ is an integer since $\uu$ is an integer,
and $m\geq 0$ since $\lambda \geq 1$.
Then
\[
 \frac{m}{\uu} + \frac{n}{\vv} = \frac{\lambda p-p}{\lambda p} + \frac{q}{\lambda q} 
= 1 - \frac{1}{\lambda} + \frac{1}{\lambda}  = 1 .
\]
Both $m$ and $n$ are integers, so condition (M2') holds as desired.
\end{proof}

%%%%%%%%%%%%%%%%%%%%%%%%%%%%%%%%%%%%%%%%%%%%%%%%
%%
%%  4.5 Proof of Theorem 1.5
%%
%%%%%%%%%%%%%%%%%%%%%%%%%%%%%%%%%%%%%%%%%%%%%%%%

\subsection{Proof of linear fractional symmetries: Theorem \ref{thm:lin-frac-sym}}
\label{subsubsec:integer-contact-proof}

We view  Theorem \ref{thm:lin-frac-sym} in  $(\uu, \vv)$ coordinates.
Recall that $\alpha = -\frac{\vv}{\uu}$, $\beta=-\frac{1}{\uu}$.
The   map $\Phi_{p}^{r}(\beta)$ is conjugate  under $\bJ(\uu)= -\frac{1}{\uu} := \beta$ 
to $\Psi_{p}^r(\uu)= \bJ^{-1} \circ \Phi_{p}^r \circ \bJ (\uu) $, 
which  acts linearly  in the $\uu$-coordinate. 
Theorem \ref{thm:lin-frac-sym} becomes:

%%%%%%%%%%%%%%%%%%%%%%%%%%%%%%%
%
%Theorem 4.11  (Formerly 1.5) 
%
%%%%%%%%%%%%%%%%%%%%%%%%%%%%%%
%\begin{customthm}{1.5}
\begin{thm}[Linear fractional symmetries in $(\uu, \vv)$-coordinates]
\label{thm:lin-frac-sym-uv}
If $ \frac{\vv}{\uu} = \frac{q}{p}$ is a fixed positive rational given in lowest terms, 
then for any integer $r \ge 1$ the set of $\uu$ 
satisfying $[f_{-q/p},f_{-1/\uu}] \geq 0$ 
is mapped to itself under
\[ 
\uu \mapsto \Psi_{p}^r(\uu):=  
%\frac{1}{r} \uu  - \left( \frac{1}{r} - 1\right) p = 
\frac{1}{r }\left( \uu - p \right)  + p.
\]
This map sends 
 $(\uu, \frac{q}{p} \uu) \in S $
to $( \Psi_{p}^r(\uu), \frac{q}{p}  \Psi_{p}^r(\uu)) \in S$ in $(\uu, \vv)$ coordinates.
\end{thm}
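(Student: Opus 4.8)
The plan is to reduce the statement to the lattice disjointness criterion of Proposition~\ref{prop:diagonal-neg} and then invoke Lemma~\ref{lem:hyperbola-neg-sporadic}. First I would note that fixing $\frac{\vv}{\uu} = \frac{q}{p}$ with $p,q$ coprime forces the relevant points to have the form $(\uu,\vv) = (\ell p, \ell q)$ with $\ell := \frac{\uu}{p} > 0$, so that $\Lambda_{\uu,\vv} = \Lambda_{\ell p, \ell q}$. By the equivalence \textnormal{(P1')}$\Leftrightarrow$\textnormal{(P2')} of Proposition~\ref{prop:diagonal-neg}, the relation $[f_{-q/p}, f_{-1/\uu}] \geq 0$ holds exactly when $\Lambda_{\ell p, \ell q}$ is disjoint from $\cDprime$. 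Thus the theorem will follow once I show: if $\Lambda_{\ell p, \ell q}$ is disjoint from $\cDprime$ and $r\geq 1$ is an integer, then the lattice attached to $\Psi_p^r(\uu)$ is also disjoint from $\cDprime$.

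Next I would match $\Psi_p^r$ with the $\lambda$-rescaling of Lemma~\ref{lem:hyperbola-neg-sporadic}. Writing $\uu = \ell p$ gives $\Psi_p^r(\uu) = p + \frac{1}{r}(\uu - p) = p\bigl(1 + \frac{1}{r}(\ell-1)\bigr) = \lambda_{r,\ell}\, p$, so the image of $(\uu, \frac{q}{p}\uu)$ in $(\uu,\vv)$-coordinates is precisely $(\lambda_{r,\ell} p, \lambda_{r,\ell} q)$. Hence it suffices to prove that $\Lambda_{\lambda_{r,\ell} p, \lambda_{r,\ell} q}$ is disjoint from $\cDprime$.

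I would then split into two cases according to whether $\ell = \frac{\uu}{p}$ is rational. If $\ell$ is rational, Lemma~\ref{lem:hyperbola-neg-sporadic}(a) applies verbatim and gives that $\Lambda_{\lambda_{r,\ell} p, \lambda_{r,\ell} q}$ is disjoint from $\cDprime$ for every integer $r\geq 1$ (the case $r=1$ being the identity, since $\lambda_{1,\ell}=\ell$). If $\ell$ is irrational, Lemma~\ref{lem:hyperbola-neg-sporadic}(b) first forces $\ell > 1$ from the disjointness hypothesis; then $\lambda_{r,\ell} = 1 + \frac{1}{r}(\ell-1)$ is again irrational and, because $r\geq 1$, strictly greater than $1$, so the converse half of Lemma~\ref{lem:hyperbola-neg-sporadic}(b) yields disjointness of $\Lambda_{\lambda_{r,\ell} p, \lambda_{r,\ell} q}$ from $\cDprime$. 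In both cases, applying \textnormal{(P2')}$\Rightarrow$\textnormal{(P1')} of Proposition~\ref{prop:diagonal-neg} converts this disjointness back into $[f_{-q/p}, f_{-1/\Psi_p^r(\uu)}] \geq 0$, which is the assertion. Since Theorem~\ref{thm:lin-frac-sym-uv} is conjugate under $\bJ$ to Theorem~\ref{thm:lin-frac-sym}, this argument also completes the proof of the latter.

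The content here is almost entirely bookkeeping: the delicate points are the algebraic identification $\Psi_p^r(\uu) = \lambda_{r,\ell}\, p$ and the careful separation of the rational and irrational subcases, since in the irrational case one must extract $\ell > 1$ before the converse direction of Lemma~\ref{lem:hyperbola-neg-sporadic}(b) can be applied. I expect no genuinely new difficulty beyond the already-established Lemma~\ref{lem:hyperbola-neg-sporadic}; in particular, positivity of $\Psi_p^r(\uu)$ requires no separate check, being built into the conclusion of that lemma.
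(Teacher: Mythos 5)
Your proof is correct and follows essentially the same approach as the paper: it splits on rationality of $\uu$ (equivalently of $\ell = \uu/p$), applies Lemma~\ref{lem:hyperbola-neg-sporadic}(a) in the rational case and Lemma~\ref{lem:hyperbola-neg-sporadic}(b) in the irrational case, and translates between the commutator inequality and lattice disjointness via Proposition~\ref{prop:diagonal-neg}. The paper's version is terser (it phrases the irrational case as $\Psi_p^r$ preserving the set $\{\uu : \uu \geq p\}$) but uses the same ingredients and case decomposition, so there is no substantive difference.
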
 
%\end{customthm}
%%%%%%%%%%%%%%%
% Proof of Theorem 2.2.
%%%%%%%%%%%%%%
\begin{proof}
For integer $r \ge 1$,
the map  $\Psi_{p}^r(\uu)$ sends rational numbers to rational numbers and  irrationals to irrationals.
We  treat these two cases separately.

{\em Case (i).} Suppose that $\uu$ is irrational. 
Lemma \ref{lem:hyperbola-neg-sporadic} (b) says  
$[f_{-q/p},f_{-1/\uu}] \geq 0$ if and only if $\uu \geq p$.
The map $\Psi_{p}^r(\mup)$, which is equivalent to $\uu - p \mapsto \frac{1}{r} (\uu - p)$, 
sends the  set $\{ \uu : \uu \geq p \}$ to itself.

{\em Case (ii).} Suppose that $\uu$ is rational. 
Lemma \ref{lem:hyperbola-neg-sporadic}  (a)
and Proposition \ref{prop:diagonal-neg} then
imply:
if $[f_{-q/p},f_{-1/\uu}] \geq 0$ then also  ${ [f_{-q/p},f_{-1/\Psi_{p}^r(\uu)}] \geq 0 }$.
\end{proof}

%%%%%%%%%%%%%%%%%%%%%%%%%%%%%%%%%%%%%%%%%%%%%%%%%%%%%%%%%%%%%%%%%%%%%%%%%%%%%%%%%%%
%
% SECTION  5: negative dilations:sufficiency 
%
%%%%%%%%%%%%%%%%%%%%%%%%%%%%%%%%%%%%%%%%%%%%%%%%%%%%%%%%%%%%%%%%%%%%%%%%%%%%%%%%%%%
\section{Negative dilations classification: sufficiency}
\label{sec:negative-S}

In this section we prove the sufficiency part of Theorem \ref{thm:negative}.

%%%%%%%%%%%%%%%%%%%%%%%%
%
% SUBSECTION 5.1 Completion of Proof of Sufficiency in Theorem 1.3
%
%%%%%%%%%%%%%%%%%%%%%%%%%%
\subsection{Sufficiency condition in $(\uu, \vv)$-coordinates}
\label{sec:neg-sufficient}
We use the $(\uu,\vv)$ change of coordinates.
We restate the sufficiency  condition in Theorem \ref{thm:negative} in terms of $(\uu,\vv)$-coordinates.

%%%%%%%%%%%%%%%%%%%
% Theorem. 5.1
%%%%%%%%%%%%%%%%%%%
\begin{thm}[Sufficiency condition in $(\uu,\vv)$ coordinates]
\label{thm:neg-sufficient}
Given parameters $\uu,\vv>0$, 
 the nonnegative commutator relation $[f_{-\vv/\uu}, f_{-1/\uu}] \geq 0$  holds 
if any one of the following conditions holds.
\begin{enumerate}
\item[(i)] 
There are integers $m\geq 0,n\geq 1$ such that
  \begin{equation*}
  \frac{m}{\uu} + \frac{n}{\vv} = 1.
  \end{equation*}
\item[(ii)] 
  There are coprime integers $p,q\geq 1$ such that
  \begin{equation*}
  %\alpha = -\frac{n}{m}, \quad -\frac1{m} \leq \beta < 0.
  \frac{\uu}{p} = \frac{\vv}{q} \geq 1.
  \end{equation*}

\item[(iii*)]
There are coprime integers $p, q\geq 1$ and  integers $m\geq 0, n\geq 1, r\geq 1$ such that
  \begin{equation*}
  \frac{\uu}{p} = \frac{\vv}{q} = 1 +\frac{1}{r}\left(\frac{m}{p}+ \frac{n}{q} - 1 \right).
  \end{equation*}
\end{enumerate}
\end{thm}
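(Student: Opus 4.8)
The plan is to push everything through the lattice disjointness criterion of Proposition~\ref{prop:diagonal-neg}: the relation $[f_{-\vv/\uu}, f_{-1/\uu}] \geq 0$ holds if and only if the rectangular lattice $\Lambda_{\uu,\vv} = \uu\ZZ \times \vv\ZZ$ is disjoint from the punctured enlarged diagonal $\cDprime$. So for each of the three conditions it will be enough to show $\Lambda_{\uu,\vv} \cap \cDprime = \emptyset$, after which the implication (P2')~$\Rightarrow$~(P1') of that proposition finishes the case.

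Case~(i), where $\frac{m}{\uu} + \frac{n}{\vv} = 1$ with $m \geq 0$, $n \geq 1$, is immediate: this is the implication (M2')~$\Rightarrow$~(M1') of Lemma~\ref{lem:diagonal-vert-boundary}, which in fact gives disjointness of $\Lambda_{\uu,\vv}$ even from the larger region $\cD \cup \cDprime$. As the proof of that lemma records, this direction does not use rationality of $\uu,\vv$, so it applies verbatim here. For case~(ii), where $\uu = \ell p$, $\vv = \ell q$ with coprime $p,q \geq 1$ and $\ell := \uu/p = \vv/q \geq 1$, I would argue directly from the definition of $\cDprime$. A point of $\Lambda_{\uu,\vv}$ has the form $(j\ell p, k\ell q)$, $j,k\in\ZZ$; if it lay in $\cDprime$ then the defining inequalities $\floor{k\ell q} \leq j\ell p \leq \ceil{k\ell q}$ together with $j\ell p \neq k\ell q$ would force $0 < |j\ell p - k\ell q| < 1$, hence $0 < \ell\,|jp - kq| < 1$, and since $\ell \geq 1$ this makes the integer $jp - kq$ satisfy $0 < |jp-kq| < 1$, a contradiction. (Coprimality of $p,q$ plays no role in this step.)

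Case~(iii*), where $\uu/p = \vv/q = \lambda := 1 + \frac{1}{r}\bigl(\frac{m}{p} + \frac{n}{q} - 1\bigr)$, is where the two earlier disjointness lemmas get chained. I would set $\ell := \frac{m}{p} + \frac{n}{q}$, a positive rational, and use the identity $\frac{m}{\ell p} + \frac{n}{\ell q} = \frac{1}{\ell}\bigl(\frac{m}{p} + \frac{n}{q}\bigr) = 1$, which says the parameters $(\ell p, \ell q)$ satisfy condition (M2') of Lemma~\ref{lem:diagonal-vert-boundary} with $m \geq 0$, $n \geq 1$; hence $\Lambda_{\ell p, \ell q}$ is disjoint from $\cDprime$. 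Since $\ell$ is rational, Lemma~\ref{lem:hyperbola-neg-sporadic}(a) then gives, for the given integer $r \geq 1$, that $\Lambda_{\lambda p, \lambda q}$ with $\lambda = 1 + \frac{1}{r}(\ell - 1)$ is disjoint from $\cDprime$ --- but this $\lambda$ is exactly the one in the hypothesis and $\Lambda_{\uu,\vv} = \Lambda_{\lambda p, \lambda q}$, so the case is complete.

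Essentially all of the substance is already packaged in Lemmas~\ref{lem:diag-expansion}, \ref{lem:hyperbola-neg-sporadic} and \ref{lem:diagonal-vert-boundary}, so I expect this proof to be largely assembly rather than new work. The step that will require the most care --- and the main (modest) obstacle --- is the chaining in case~(iii*), together with the elementary bookkeeping it rests on: verifying the positivity conditions ($\ell > 0$ always, and $\lambda = 1 + \frac{1}{r}(\ell-1) > 0$ for every integer $r \geq 1$) needed to invoke Lemma~\ref{lem:hyperbola-neg-sporadic}, and flagging explicitly that case~(i) uses only the direction of Lemma~\ref{lem:diagonal-vert-boundary} that does not assume rationality. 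Beyond that I anticipate no genuine difficulty.
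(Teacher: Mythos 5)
Your proof is correct, and it reaches the conclusion by a genuinely different route from the paper's. The paper establishes cases~(i) and~(ii) by starting from the primitive solutions of Lemma~\ref{lem:hyperbola-neg} (the hyperbola $\frac{1}{\uu}+\frac{1}{\vv}=1$, the half-line $\vv=1$, and the half-line $\uu=\vv\geq 1$) and rescaling by the integer symmetries of Theorem~\ref{thm:symmetries-uv}, and its case~(iii*) starts from case~(i) and applies the linear fractional symmetry of Theorem~\ref{thm:lin-frac-sym-uv}. You instead dispatch case~(i) and the base point of case~(iii*) with the implication (M2')~$\Rightarrow$~(M1') of Lemma~\ref{lem:diagonal-vert-boundary}, replace the symmetry argument in case~(ii) with a one-line elementary estimate, and invoke Lemma~\ref{lem:hyperbola-neg-sporadic}(a) directly rather than through Theorem~\ref{thm:lin-frac-sym-uv}. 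Both proofs ultimately funnel through the lattice disjointness criterion of Proposition~\ref{prop:diagonal-neg}, and the matrix computation inside Lemma~\ref{lem:hyperbola-neg-sporadic}(a) is the same one that powers Theorem~\ref{thm:lin-frac-sym-uv}, so the underlying substance is shared; what differs is the packaging. The paper deliberately keeps Lemma~\ref{lem:diagonal-vert-boundary} out of the sufficiency direction (Section~\ref{sec:disjoint2} notes those lemmas are ``not needed in the sufficiency proof'') and uses the occasion to showcase the symmetry structure of $S$, whereas your version is a flatter, more direct assembly of the disjointness lemmas. Your flag that the (M2')~$\Rightarrow$~(M1') direction does not require rationality of $\uu,\vv$ is the one point where the shortcut genuinely needs care, and you handle it correctly by citing the remark to that effect in the lemma's proof.
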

Note that in terms of extended greatest common divisor (see Section~\ref{subsubsec:rational-gcd}),
case $(ii)$ of Theorem~\ref{thm:neg-sufficient} is equivalent to ${ \egcd(\uu,\vv) \geq 1 }$.
The condition in case $(i)$ is related to disjoint Beatty sequences, see Part I
\cite[Prop. 2.5]{LagR:2018a}.
We prove Theorem \ref{thm:neg-sufficient} in the remainder of this section.

%%%%%%%%%%%%%%%%%%%
% SUB_SECTION N5.2
%%%%%%%%%%%%%%%%%%%
\subsection{Sufficiency: Hyperbola and half-line cases}
\label{sec:neg-lattice-disjointness}

The next result  shows existence of solutions for parameters corresponding to $m=n=1$,
to $m=0$,
and to $p=q=1$. 
Later in the proof we will use the linear  symmetries of $S$ to construct solutions for other 
$m,n,p,q$ parameters.

%%%%%%%%%%%%%%%%%%%
% Lemma 5.2 Disjointness criterion satisfied
%%%%%%%%%%%%%%%%%%%
\begin{lem}[Rectangular hyperbola and half-line sufficiency]
\label{lem:hyperbola-neg}
Suppose $\uu, \vv > 0$. 
\begin{enumerate}[(i)]
\item
If  $(\uu,\vv)$ lies on the hyperbola $\frac{1}{\uu} + \frac{1}{\vv} = 1$,
 then 
 $[f_{-\vv/\uu}, f_{-1/\uu}] \geq 0$  holds.
\item 
If $(\uu, \vv)$ lies on the half-line $\vv=1$
($\uu >0$),
then $[f_{-\vv/\uu}, f_{-1/\uu}] \geq 0$  holds.
\item
If  $(\uu,\vv)$ lies on the half-line ${\uu} = {\vv} \geq 1$,
then 
$[f_{-\vv/\uu}, f_{-1/\uu}] \geq 0$  holds.
\end{enumerate}
\end{lem}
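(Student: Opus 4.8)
The plan is to verify each of the three cases by exhibiting, via Proposition~\ref{prop:diagonal-neg}, the required disjointness of the rectangular lattice $\Lambda_{\uu,\vv} = \uu\ZZ \times \vv\ZZ$ from the punctured enlarged diagonal $\cDprime$. Throughout I work in $(\uu,\vv)$-coordinates, so the goal in each case is to show $\Lambda_{\uu,\vv} \cap \cDprime = \emptyset$, which is condition (P2') and hence equivalent to (P1'), the desired nonnegative commutator relation. Recall $\cDprime = \{(x,y) : \lfloor y\rfloor \le x \le \lceil y\rceil,\ x\ne y\}$, so a lattice point $(m\uu, n\vv)$ lies in $\cDprime$ exactly when $m\uu$ and $n\vv$ differ, but $m\uu$ lies within distance less than $1$ of $n\vv$ in the appropriate (closed-on-the-$x$-side) sense.

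For part~(ii), where $\vv = 1$: the horizontal lines of the lattice occur at heights $n \in \ZZ$, which are precisely the integers, i.e. the lattice rows sit on the boundary lines $y = n$ of $\cDprime$. Since $\cDprime$ requires $n < y < n+1$ strictly in the $y$-coordinate, no point with $y\in\ZZ$ lies in $\cDprime$, so $\Lambda_{\uu,1}$ is automatically disjoint from $\cDprime$ for every $\uu > 0$; this is the easiest case and follows directly from the definition of $\cDprime$. For part~(iii), where $\uu = \vv \ge 1$: here the relevant lattice is $\Lambda_{\uu,\uu}$ with $\uu = \egcd(\uu,\uu) \ge 1$, which is the special case $p=q=1$, $\ell = \uu$ of Lemma~\ref{lem:hyperbola-neg-sporadic}. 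I would either cite that lemma directly (its hypotheses are met: $\uu \ge 1$ rational gives the conclusion via part~(a) with $r=1$, and $\uu > 1$ irrational via part~(b)), or argue directly: a lattice point is $(m\uu, n\uu)$, and for this to lie in $\cDprime$ one needs $|m\uu - n\uu| = |m-n|\uu < 1$ with $m \ne n$, impossible since $\uu \ge 1$.

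For part~(i), where $\tfrac1\uu + \tfrac1\vv = 1$: this is the Beatty-type case with $m=n=1$. I would appeal to Lemma~\ref{lem:diagonal-vert-boundary} (specifically the direction (M2')$\Rightarrow$(M1'), which the proof of that lemma notes does not require $\uu,\vv$ to be rational) with $m = n = 1$: the relation $\tfrac1\uu + \tfrac1\vv = 1$ is exactly (M2'), hence $\Lambda_{\uu,\vv}$ is disjoint from $\cD \cup \cDprime$, in particular from $\cDprime$, giving (P2') and therefore (P1'). Alternatively, since $\tfrac1\uu + \tfrac1\vv = 1$ forces $\uu, \vv > 1$, this is equivalent to the classical statement that the Beatty sequences $\{\lfloor n\uu\rfloor\}$ and $\{\lfloor n\vv\rfloor\}$ partition the positive integers, which one can translate into the lattice-disjointness statement via \cite[Prop.~2.5]{LagR:2018a} and \cite[Theorem 5.4]{LagR:2018a}.

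The main obstacle, if one does not wish to simply cite Lemma~\ref{lem:diagonal-vert-boundary} and \cite[Theorem 5.4]{LagR:2018a}, is part~(i): one must check disjointness not just from the open approximate diagonal $\cD$ (which is the Beatty partition statement) but from the larger set $\cDprime$, which includes the two closed edges $\{x = n\}$ of each diagonal box. The worry is a lattice point of the form $(m\uu, n\vv)$ with $m\uu \in \ZZ$ and $n\vv$ in the open unit interval just above it. Ruling this out amounts to the observation that when $\tfrac1\uu + \tfrac1\vv = 1$ with $\uu,\vv$ irrational, no nonzero multiple of $\uu$ is an integer, so the only lattice points with integer $x$-coordinate are on the diagonal $\{x=y\}$, which is excluded from $\cDprime$; and when $\uu$ (say) is rational, a short direct computation using $m\uu \in \ZZ$ and $\tfrac1\uu + \tfrac1\vv = 1$ pins down the residue of $n\vv$ and shows it cannot land strictly inside the box. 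I expect this boundary bookkeeping to be the only place requiring genuine care; parts (ii) and (iii) are essentially immediate.
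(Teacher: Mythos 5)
Your strategy matches the paper's exactly: reduce each case via Proposition~\ref{prop:diagonal-neg} to showing $\Lambda_{\uu,\vv}\cap\cDprime=\emptyset$, handle (ii) by noting the lattice's $y$-coordinates are integers and $\cDprime$ excludes integer $y$, and handle (iii) by the direct estimate $|m\uu-n\uu|\geq 1$ for $m\neq n$. For (i) the paper merely remarks that the argument ``parallels Lemma~5.5'' of Part~I, whereas you cite the already-proved direction (M2')$\Rightarrow$(M1') of Lemma~\ref{lem:diagonal-vert-boundary} with $m=n=1$, noting that this direction does not require rationality of $\uu,\vv$; that citation is sound and arguably the cleaner of the two references, since Lemma~\ref{lem:diagonal-vert-boundary} gives the disjointness from $\cD\cup\cDprime$ directly rather than by analogy with Part~I.

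Two small slips, neither affecting the verdict since you supply correct fallbacks. First, for (iii), invoking Lemma~\ref{lem:hyperbola-neg-sporadic}(a) with $r=1$ to handle rational $\uu\geq 1$ is circular: that lemma's \emph{hypothesis} is that $\Lambda_{\ell p,\ell q}$ is already disjoint from $\cDprime$, so with $r=1$ part~(a) just returns its hypothesis and establishes nothing. Only the converse in part~(b) (irrational $\ell>1$) gives unconditional disjointness; the rational case of (iii) genuinely needs your direct $|m-n|\uu\geq1$ computation (which is what the paper does). Second, in your sketch of a hands-on proof of (i), when $\uu$ is irrational the lattice points with integer $x$-coordinate are those with $m=0$, namely $(0,n\vv)$; for $n\neq 0$ these are \emph{not} on the diagonal. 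They avoid $\cDprime$ for a different reason: $\frac1\uu+\frac1\vv=1$ with $\uu,\vv>0$ forces $\vv>1$, so $|n\vv|>1$ and such a point lies outside every unit box of $\cDprime$ around $(0,0)$. Since your primary route for (i) is the citation of Lemma~\ref{lem:diagonal-vert-boundary}, this imprecision in the alternate sketch is harmless.
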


\begin{proof}
It suffices to verify for each case  that condition  (P2') of Proposition \ref{prop:diagonal-neg}
holds, whence the Proposition (P1') gives the result. 
To show that the lattice $\Lambda_{\uu,\vv} = \{ (m\uu, n\vv) : m,n \in\ZZ \}$ is disjoint from $\cDprime$
(resp. $\cD$), 
it suffices to show they are disjoint in the closed positive quadrant 
since both sets are preserved by 
$(x,y) \mapsto (-x,-y)$ and $\cDprime$ (resp. $\cD$)
does not intersect the open second or fourth quadrants.

(i) This case  parallels  Lemma 5.5 in  \cite{LagR:2018a}.

(ii) This case is clear from the definition of $\cDprime$ and $\Lambda_{\uu,\uu}$.

(iii) Suppose that $(\uu,\vv)$ lies on the half-line $\{(\uu,\vv) : \uu=\vv \geq1\}$, 
and consider the point $(m\uu, n\uu)$ in the lattice $\Lambda_{\uu,\uu}$.
Note that $\cDprime$ is contained in the open region $\{ 0 < |x - y| < 1 \}\subset \RR^2$. 
If ${ m=n }$ then the point $(m\uu, n\uu)=(m \uu, m\uu)$
is contained in the diagonal of $\RR^2$ so it is not in $\cDprime$. 
Next suppose $m\neq n$. 
Then the lattice point $(m\uu, n\uu)$ has coordinates which satisfy 
$|m\uu-n\uu|  = |m-n|\cdot |\uu| \geq |m-n| \geq 1$, 
so  again the point is not in $\cDprime$.
Thus $\Lambda_{\uu,\vv}$ is disjoint from $\cDprime$.
\end{proof}

%%%%%%%%%%%%%%%%%%%
% Subsection N 5.3 =5.1.2
%%%%%%%%%%%%%%%%%%%
\subsection{Proof of Sufficiency Theorem \ref{thm:neg-sufficient}}
\label{sec:neg-suff-proof}

%%%%%%%%%%%%%%%%%%%
% Proof of Theorem 5.1= 4.8
%%%%%%%%%%%%%%%%%%%
\begin{proof}[Proof of Theorem \ref{thm:neg-sufficient}]
There are three sufficient conditions to check.

{\em Case (i).}  Suppose  $\uu, \vv$ satisfy $\frac{m}{\uu} + \frac{n}{\vv} =1$ with $m,n\geq 1$.
Let $\uu_0 = \frac{\uu}{m}$ and $\vv_0 = \frac{\vv}{n}$;
these satisfy $\frac{1}{\uu_0} + \frac{1}{\vv_0} =1$. 
Lemma \ref{lem:hyperbola-neg} (i) implies that 
$(\uu_0,\vv_0)$ satisfies $[f_{-\vv_0/\uu_0}, f_{-1/\uu_0}] \geq 0$.
It follows from Theorem \ref{thm:symmetries-uv}
that $[f_{-\vv/\uu},f_{-1/\uu}]\geq 0$. % by Proposition \ref{prop:diagonal-neg}.

If $m = 0$, 
then let $\uu_0 = \uu$ and $\vv_0 = \frac{\vv}{n}$.
We repeat the same argument with Lemma \ref{lem:hyperbola-neg} (ii)
and Theorem \ref{thm:symmetries-uv}.

{\em Case (ii).}  Suppose  $\uu, \vv$ satisfy $\frac{\uu}{p} = \frac{\vv}{q} \geq 1$ for 
coprime integers
%\footnote{For this argument to work it is not necessary that $p,q$ be coprime, but dropping the coprime assumption does not yield additional solutions of $\uu, \vv$ parameters.}  
$p,q$, 
then  let $\uu_0 = \frac{\uu}{p}$ and $\vv_0 = \frac{\vv}{q}$. 
Now Lemma \ref{lem:hyperbola-neg} (iii) implies that 
$[f_{-\vv_0/\uu_0}, f_{-1/\uu_0}] \geq 0$,
so it follows from Theorem \ref{thm:symmetries-uv} that
$[f_{-\vv/\uu},f_{-1/\uu}]\geq 0$. 
(This argument works without assuming coprimality of $p$ and $ q$, but
dropping this assumption does not yield additional solutions.)

{\em Case (iii).}  Suppose that $\uu,\vv$ satisfy
\begin{equation}\label{eq:star} 
%\tag{$\star$}
\frac{\uu}{p} = \frac{\vv}{q} = 1 + \frac{1}{r}\left(\frac{m}{p} + \frac{n}{q} - 1\right) 
\end{equation}
for coprime integers $p,q\geq 1$ and integers $m\geq 0, n\geq 1, r\geq 1$.
Let 
\[
\frac{\uu_0}{p} = \frac{m}{p} + \frac{n}{q}
\qquad\text{and}\qquad
 \frac{\vv_0}{q} = \frac{m}{p} + \frac{n}{q} 
\]
so that 
$ \frac{m}{\uu_0} + \frac{n}{\vv_0} = 1 $.
By case $(i)$ we have
$[f_{-\vv_0/\uu_0}, f_{-1/\uu_0}] \geq 0$.

To show that $[f_{-\vv/\uu},f_{-1/\uu}]\geq 0$,
we apply Theorem \ref{thm:lin-frac-sym-uv}. 
Since $\uu = \Psi_p^r(\uu_0)$ and $\vv = \frac{q}{p} \Psi_p^r(\uu_0)$,
the conclusion follows.
\end{proof}

%%%%%%%%%%%%%%%%%%%%%%%%%%%%%%%%%%%%%%%%%%%%%%%%%%%%%%%%%%%%%%%%%%%%%%%%%%%%%%%%%%%
%
% Section 6: negative dilations: Necessity
%
%%%%%%%%%%%%%%%%%%%%%%%%%%%%%%%%%%%%%%%%%%%%%%%%%%%%%%%%%%%%%%%%%%%%%%%%%%%%%%%%%%%
\section{Negative dilations classification: necessity}
\label{sec:negative-N}

This section proves   the necessity  for membership in $S$ of the conditions in  Theorem \ref{thm:negative}.

%%%%%%%%%%%%%%%%%%%%
%
% Subsection 6.1   
%
%%%%%%%%%%%%%%%%%%%%%
\subsection{Birational Coordinate change: $(\tbetap, \trhop)$-coordinates }\label{sec:birational-2} 

We birationally transform  the parameter space  
to a third set of coordinates,   $(\tbetap, \trhop)$-coordinates,
given by
\begin{equation}
(\tbetap, \trhop) := \left( \frac{1}{\uu}, \frac{1}{\vv} \right) 
= \left( \betap, \frac{\betap}{\alphap} \right) 
\quad\text{where}\quad \alphap, \betap > 0.
\end{equation}
The inverse map is    
$(\alphap, \betap) = (\frac{\tbetap}{\trhop}, \tbetap)$.
The negative dilation part of the set $S$ viewed in $(\tbetap, \trhop)$-coordinates
is pictured in Figure \ref{fig:rhoprime-coord}.

%%%%%%%%%%%%%%%%%%%%%%%%%%%%%%
%
%  Figure N6.1 (\sigma-prime, \tau-prime)=(\tbetap, \trhop) coordinates 
%
%%%%%%%%%%%%%%%%%%%%%%%%%%%%%%%
\begin{figure}[h]
\begin{center}
 %%%%%%%%%%%
\begin{tikzpicture}[scale=.50]
  \draw[->] (0,0) -- (10,0) node[right] {$\tbetap=-\beta$};
  \draw[->] (0,0) -- (0,10) node[above] {$\trhop=\beta/\alpha$};
  \draw[scale=8.6] (1,0.05) -- (1,-0.05) node[below] {$1$};
  \draw[scale=8.6] (0.05,1) -- (-0.05,1) node[left] {$1$};
  
  \draw[scale=8.6,domain=0:1.1,smooth,variable=\x, blue] plot ({\x},{1});
  \draw[scale=8.6,domain=0:1.1,smooth,variable=\x, blue] plot ({\x},{1/2});
  \draw[scale=8.6,domain=0:1.1,smooth,variable=\x, blue] plot ({\x},{1/3});
  \draw[scale=8.6,domain=0:1.1,smooth,variable=\x, blue] plot ({\x},{1/4});
  \draw[scale=8.6,domain=0:1.1,smooth,variable=\x, blue] plot ({\x},{1/5});

  \draw[scale=8.6,domain=0:1.0,smooth,variable=\y, red]  plot ({\y},{\y});
  \draw[scale=8.6,domain=0:1.0,smooth,variable=\x, red]  plot ({\x},{\x/2});
  \draw[scale=8.6,domain=0:1.0,smooth,variable=\x, red]  plot ({\x},{\x/3});
  \draw[scale=8.6,domain=0:1.0,smooth,variable=\x, red]  plot ({\x},{\x/4});
  \draw[scale=8.6,domain=0:1.0,smooth,variable=\x, red]  plot ({\x},{\x/5});
  \draw[scale=8.6,domain=0:1.0,smooth,variable=\x, red]  plot ({\x/2},{\x});
  \draw[scale=8.6,domain=0:1.0,smooth,variable=\x, red]  plot ({\x/2},{\x/3});
  \draw[scale=8.6,domain=0:1.0,smooth,variable=\x, red]  plot ({\x/2},{\x/5});
  \draw[scale=8.6,domain=0:1.0,smooth,variable=\x, red]  plot ({\x/3},{\x});
  \draw[scale=8.6,domain=0:1.0,smooth,variable=\x, red]  plot ({\x/3},{\x/2});
  \draw[scale=8.6,domain=0:1.0,smooth,variable=\x, red]  plot ({\x/3},{\x/4});
  \draw[scale=8.6,domain=0:1.0,smooth,variable=\x, red]  plot ({\x/3},{\x/5});
  \draw[scale=8.6,domain=0:1.0,smooth,variable=\x, red]  plot ({\x/4},{\x});
  \draw[scale=8.6,domain=0:1.0,smooth,variable=\x, red]  plot ({\x/4},{\x/3});
  \draw[scale=8.6,domain=0:1.0,smooth,variable=\x, red]  plot ({\x/4},{\x/5});
  \draw[scale=8.6,domain=0:1.0,smooth,variable=\x, red]  plot ({\x/5},{\x});
  \draw[scale=8.6,domain=0:1.0,smooth,variable=\x, red]  plot ({\x/5},{\x/2});
  \draw[scale=8.6,domain=0:1.0,smooth,variable=\x, red]  plot ({\x/5},{\x/3});
  \draw[scale=8.6,domain=0:1.0,smooth,variable=\x, red]  plot ({\x/5},{\x/4});
  
%   %sporadic solutions
  \foreach \y in {2/3,3/5,4/7,5/9,6/11}{% node on the grid we have drawn 
    \node[draw,circle,inner sep=0.5pt,fill,red] at (2*\y*8.6,\y*8.6) {};  }
  \foreach \y in {2/5,3/8,4/11,5/14,6/17}{% node on the grid we have drawn 
    \node[draw,circle,inner sep=0.5pt,fill,red] at (3*\y*8.6,\y*8.6) {};  }
  
  \foreach \y in {9/7,9/8,12/11,15/14} {
    \node[draw,circle,inner sep=0.5pt,fill,red] at (1/2*\y*8.6,1/3*\y*8.6) {};}
  \foreach \y in {4/3,8/7,10/9,16/15} {
    \node[draw,circle,inner sep=0.5pt,fill,red] at (1/3*\y*8.6,1/2*\y*8.6) {};}
  
%   %%%%%%%%%%%%%%%%%%%   

  \draw[scale=8.6,domain=0:1,smooth,variable=\x, green] plot ({\x},{1-\x});
  \draw[scale=8.6,domain=0:0.5,smooth,variable=\x, green] plot ({\x},{1-2*\x});
  \draw[scale=8.6,domain=0:1,smooth,variable=\x, green] plot ({\x},{1/2-\x/2});
  \draw[scale=8.6,domain=0:0.33,smooth,variable=\x, green] plot ({\x},{1-3*\x});
  \draw[scale=8.6,domain=0:1,smooth,variable=\x, green] plot ({\x},{1/3-\x/3});
  \draw[scale=8.6,domain=0:0.5,smooth,variable=\x, green] plot ({\x},{1/2-\x});
\end{tikzpicture}
\end{center}
\caption{Negative dilation solutions of $S$  in  $(\tbetap, \trhop)$-coordinates: 
$\tbetap = -\beta,\, \trhop = \alpha/\beta$}
\label{fig:rhoprime-coord}
\end{figure}
%%%%%%%%%%%%%%%%%%%%%%%%%%%%%%
%
%  End Figure N6.1 (\sigma-prime, \tau-prime)=(\tbetap, \trhop) coordinates 
%
%%%%%%%%%%%%%%%%%%%%%%%%%%%%%%%

The $(\tbetap, \trhop)$ parameters take values in the positive quadrant.
The  negative dilation portion of $S$ viewed in  $(\tbetap, \trhop)$ coordinates 
consists of line segments and isolated points.

%%%%%%%%%%%%%%%%%%%%%%%%
%
% SUBSECTION 6.2 Completion of Proof of Sufficiency in Theorem 1.3
%
%%%%%%%%%%%%%%%%%%%%%%%%%%
\subsection{Necessity condition in $(\tbetap, \trhop)$-coordinates}
\label{sec:neg-necessity1}

We reformulate  the necessity condition in Theorem \ref{thm:negative} in terms of 
$(\tbetap, \trhop)$-coordinates. 

%%%%%%%%%%%%%%%%%%%
% Theorem 6.1 Necessary condition
%%%%%%%%%%%%%%%%%%%
\begin{thm}[Necessity in $(\tbetap, \trhop)$-coordinates]
\label{thm:neg-necessary}
For parameters $\tbetap, \trhop>0$, 
the nonnegative commutator relation ${ [f_{-\tbetap/\trhop}, f_{-\tbetap}] \geq 0 }$
does not hold unless one of the following conditions holds.

\begin{enumerate}[(a)]
\item
 If at least one of $\tbetap, \trhop$ is irrational, then  exactly one of the following two conditions holds.
\begin{enumerate}[(i)]
\item[$(i^{\ast})$] \label{it:hyperbola}
There are integers $m \geq 0,n\geq 1$ such that
\begin{equation*}
{m}\tbetap + {n}\trhop = 1.
\end{equation*}

\item[$(ii^{\ast})$] \label{it:line}
There are coprime integers $p,q$ such that
\begin{equation*}
p{\tbetap} = q{\trhop} \leq 1.
\end{equation*}

\end{enumerate}
\item If   $\tbetap, \trhop$ are both rational,
then: 
\begin{enumerate}
\item[(iii*)]
There are coprime integers $p,q$ and integers $m\geq 0, n\geq1$, and $r\geq 1$ such that
\begin{equation*}
{p}\tbetap = {q}\trhop = \left(1 + \frac{1}{r} \left( \frac{m}{p} + \frac{n}{q}-1\right)\right)^{-1}.
\end{equation*}
\end{enumerate}
\end{enumerate}
\end{thm}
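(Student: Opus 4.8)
The plan is to convert the nonnegative commutator relation into a lattice-disjointness statement and then carry out a rationality case analysis, feeding the pieces into the three lemmas of Sections~\ref{sec:disjoint} and \ref{sec:disjoint2}. Set $\uu = 1/\tbetap$ and $\vv = 1/\trhop$, so that $[f_{-\tbetap/\trhop}, f_{-\tbetap}] = [f_{-\vv/\uu}, f_{-1/\uu}]$; by Proposition~\ref{prop:diagonal-neg} the relation is equivalent to $\Lambda_{\uu,\vv} \cap \cDprime = \emptyset$. Assuming this disjointness, I would split first on whether $\uu/\vv$ (equivalently $\trhop/\tbetap$) is rational, and, in the rational-ratio case, split further on whether $\lambda := \egcd(\uu,\vv)$ is rational. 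Note that ``$\uu/\vv$ irrational'' forces at least one of $\tbetap,\trhop$ irrational, so it falls under part~(a); ``$\uu/\vv$ rational and $\lambda$ irrational'' makes both $\tbetap,\trhop$ irrational (part~(a)); and ``$\lambda$ rational'' is precisely ``$\tbetap,\trhop$ both rational'' (part~(b)).

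\emph{Irrational ratio $\uu/\vv$.} Here $\Lambda_{\uu,\vv}$ meets the diagonal $\Delta$ only at the origin, hence misses $\cD \smallsetminus \cDprime = \Delta \smallsetminus \ZZ^2$; together with $\Lambda_{\uu,\vv}\cap\cDprime = \emptyset$ this gives $\Lambda_{\uu,\vv}\cap\cD = \emptyset$. Part~I's disjoint-Beatty classification (\cite[Prop.~5.2, Thm.~5.4]{LagR:2018a}) then yields integers $m, n \ge 0$ with $\frac{m}{\uu} + \frac{n}{\vv} = 1$. One checks $n \ge 1$: if $n = 0$ then $\uu = m \in \ZZ$ while $\vv$ is irrational, but density of $\uu\ZZ + \vv\ZZ$ produces a lattice point $(am, b\vv)$ with $b\vv$ just above the integer $am$, which lies on a vertical edge of $\cDprime$ --- a contradiction. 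So $(i^{\ast})$ holds; and $(ii^{\ast})$ would make $\uu/\vv$ rational, so exactly $(i^{\ast})$ holds in part~(a).

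\emph{Rational ratio $\uu/\vv$.} Write $\uu/\vv = p/q$ in lowest terms, $p,q\ge 1$ coprime, and set $\lambda := \uu/p = \vv/q = \egcd(\uu,\vv)$, so $\Lambda_{\uu,\vv} = \Lambda_{\lambda p, \lambda q}$. If $\lambda$ is irrational, Lemma~\ref{lem:hyperbola-neg-sporadic}(b) forces $\lambda > 1$, whence $p\tbetap = q\trhop = 1/\lambda < 1$, which is $(ii^{\ast})$; here $(i^{\ast})$ fails since $m\tbetap + n\trhop = 1$ would make $\lambda = \frac{m}{p} + \frac{n}{q}$ rational, so again exactly one condition holds in part~(a). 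If $\lambda$ is rational, apply Lemma~\ref{lem:neg-sporadic-converse} to this $\lambda$ to obtain an integer $r \ge 1$ with $\Lambda_{\ell p, \ell q}$ disjoint from $\cD \cup \cDprime$ for $\ell = 1 + r(\lambda - 1)$, and then Lemma~\ref{lem:diagonal-vert-boundary} gives integers $m \ge 0$, $n \ge 1$ with $\frac{m}{\ell p} + \frac{n}{\ell q} = 1$, i.e.\ $\ell = \frac{m}{p} + \frac{n}{q}$. Inverting $\lambda = 1 + \frac{1}{r}(\ell - 1)$ yields $p\tbetap = q\trhop = 1/\lambda = \bigl(1 + \frac{1}{r}(\frac{m}{p} + \frac{n}{q} - 1)\bigr)^{-1}$, which is $(iii^{\ast})$, as required for part~(b).

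\emph{Main obstacle.} No step is computationally heavy; the real work is bookkeeping --- keeping the coordinate changes $(\alpha,\beta)\leftrightarrow(\uu,\vv)\leftrightarrow(\tbetap,\trhop)$ and the two regions $\cDprime$ and $\cD\cup\cDprime$ straight, and, in the rational-ratio case, correctly routing through Lemma~\ref{lem:neg-sporadic-converse} (which upgrades disjointness from $\cDprime$ to disjointness from the larger set $\cD\cup\cDprime$) before invoking Lemma~\ref{lem:diagonal-vert-boundary}. One must also track the side conditions ($m\ge 0$, $n\ge 1$, $r\ge 1$, $p,q$ positive coprime) and check the ``exactly one of $(i^{\ast}),(ii^{\ast})$'' dichotomy in part~(a): $(ii^{\ast})$ makes $\tbetap/\trhop$ rational, and $(i^{\ast})$ with $n\ge 1$ together with a rational ratio would force both $\tbetap,\trhop$ rational, contradicting the hypothesis of part~(a).
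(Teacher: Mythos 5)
Your proposal is correct, and it diverges from the paper mainly in part (a). For part (b) you take essentially the same path the paper does: change to $(\uu,\vv)$-coordinates, invoke Proposition~\ref{prop:diagonal-neg}, route through Lemma~\ref{lem:neg-sporadic-converse} (to upgrade $\cDprime$-disjointness to $\cD\cup\cDprime$-disjointness at a nearby $\ell$) and then Lemma~\ref{lem:diagonal-vert-boundary}, and invert $\lambda = 1 + \frac{1}{r}(\ell-1)$.

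For part (a) your route is genuinely different. The paper works in $(\tbetap,\trhop)$-coordinates via the torus subgroup criterion (Proposition~\ref{lem:torus-neg}): the cyclic subgroup $H = \langle(\widetilde{\tbetap},\widetilde{\trhop})\rangle_{\bT}$ is infinite when a coordinate is irrational, the closed subgroup theorem (Theorem~\ref{thm:lie-subgroup}) forces $\bar{H}$ to be a one- or two-dimensional Lie subgroup, and the paper then does a direct case analysis on the resulting integer relation $m\tbetap + n\trhop = k$ (signs of $m,n$ and value of $k$) against the modified corner rectangle $\cCprime_{\tbetap,\trhop}$, landing on exactly $k=0$ (giving $(ii^\ast)$) or $k = 1$ with $n \geq 1$ (giving $(i^\ast)$). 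You instead stay in the $(\uu,\vv)$ lattice picture: split on whether $\uu/\vv$ is rational; in the irrational-ratio case note the lattice misses the diagonal so disjointness from $\cDprime$ upgrades to disjointness from $\cD$, and then import the positive-dilation classification from Part~I to get the hyperbola relation (checking $n\geq 1$ by a density argument against a vertical edge of $\cDprime$ when $\uu$ is an integer and $\vv$ irrational); in the rational-ratio, irrational-$\lambda$ case, use Lemma~\ref{lem:hyperbola-neg-sporadic}(b). Your approach avoids the Lie-theoretic machinery and is more economical given that Part~I's necessity theorem is already in hand, but it leans more on Part~I (you need its necessity direction for possibly irrational $\uu,\vv$, which the paper only explicitly invokes for rational $\uu,\vv$ inside Lemma~\ref{lem:diagonal-vert-boundary}, and you need the $n\geq 1$ argument, which is correct but stated somewhat informally). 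The paper's closed-subgroup route is more self-contained within the negative-dilation machinery. Both are valid.
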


The necessary conditions   in Theorem \ref{thm:neg-necessary}
 classify solutions into  three disjoint cases. 
This classification removes all rational solutions from cases $(i)$ and $(ii)$,
 giving cases $(i^{\ast})$ and $(ii^{\ast})$ respectively, with $(i^{\ast}) \subset (i)$ and $(ii^{\ast}) \subset (ii)$. 
One sees that $(i^{\ast})$  and $(ii^{\ast})$  are disjoint by  observing
that any common solution requires  solving  two linear equations in $(\tbetap, \trhop)$ with integer 
coefficients,
which  either are inconsistent or else have a
unique rational solution $(\tbetap, \trhop)$.

%%%%%%%%%%%%%%%%%%%%
%
% Subsection 6.3   Torus subgp criterion: negative dilations
%
%%%%%%%%%%%%%%%%%%%%%
\subsection{Torus subgroup criterion: negative dilations}
\label{sec:neg-disjoint}

To prove necessity of the classification given in Theorem \ref{thm:neg-necessary}
we establish a criterion for nonnegative commutator, given in terms of
$(\tbetap, \trhop)$-coordinates.
It is expressed in terms of a cyclic subgroup of the 2-dimensional torus\footnote{In the published
version of Part~I (\cite{LagR:2018a}), $\bT$ was denoted $\TT$.}
$$\bT := \RR^2/\ZZ^2$$
avoiding a set $\cCprimebar_{\tbetap, \trhop}$
which   also varies with the parameters. 
The set to be avoided, viewed in $\RR^2$,  
is an open rectangular region  with one corner at the origin,
modified to exclude a diagonal  
and to include two vertical sides of its boundary.
It is neither  open nor  closed as a subset of $\RR^2$.
 
Given a real number $x$, we let $\widetilde{x}$ denote its image under the quotient map $\RR \to \RR/\ZZ$.

%%%%%%%%%%%
% Defn 6.2 corner rectangle
%%%%%%%%%%%
\begin{defi}\label{defn:mod-corner-rectangle}
We define the {\em modified corner rectangle} 
$\cCprime_{\tbetap,\trhop}$ to be the    region
\begin{equation}\label{cCprime}
\cCprime_{\tbetap,\trhop} := 
\{(x,y) : 0\leq x \leq \tbetap, \, 0<y<\trhop,\, \frac{x}{\tbetap} \neq \frac{y}{\trhop} \} \subset \RR^2.
\end{equation}
We define $\cCprimebar_{\tbetap,\trhop}$ to be the image of 
$\cCprime_{\tbetap,\trhop}$ under coordinatewise projection $\RR^2 \to \bT$
to the torus ${ \bT = \RR^2 /\ZZ^2 }$.

We set $\cCprime = \cCprime_{1,1}$, so that 
\begin{equation}\label{cCprime11}
\cCprime := \cCprime_{1,1} = \{(x,y) : 0\leq x \leq 1, \, 0<y<1,\, x \neq y \} \subset \RR^2
\end{equation}
We call $\cCprime$ the {\em unit modified corner rectangle}.
\end{defi}

The  projected modified corner rectangle $\cCprimebar_{\tbetap, \trhop}$ consists of 
two connected components which are %half-open / half-closed 
triangles, as long as $0 < \tbetap, \trhop < 1$.
If $\tbetap \geq 1$ or $\trhop > 1$ then some ``wraparound'' occurs in the projection to the torus
$\RR^2 \to \bT= \RR^2 / \ZZ^2$.
See Figure \ref{fig:torus-mod-corner-rectangle} 
for examples of the projections $\cCprimebar_{\tbetap,\trhop}$.

%%%%%%%%%%%%%%%%%%%
%
% Figure N6.2Modified corner rectangle C-prime
%
%%%%%%%%%%%%%%%%%%%
\begin{figure}[h]
\centering 
\begin{center}
  \begin{tikzpicture}[scale=2.5]
  \draw[-] (4/9,0) -- (1,0);
  \draw[-] (0,0) -- (0,1);
  \draw[dotted] (1,0) -- (1,1);
  \draw[dotted] (0,1) -- (1,1);
  
  %generating point v
  \node[draw,circle,inner sep=0.5pt,fill,red] at (4/9,1/3) {};
  \draw[-] (4/9,1/3) node[right] {$(\tbetap, \trhop)$};
  
  %lower-left corner
  \foreach \x in {4/9} {
   \foreach \y in {1/3} {
    \fill[xscale=\x,yscale=\y,gray,nearly transparent] 
        (0,0+0.05) -- (0,1) -- (0.95,1) -- cycle;
    \fill[xscale=\x,yscale=\y,gray,nearly transparent] 
        (0+0.05,0) -- (1,0) -- (1,0.95) -- cycle;
    \draw[xscale=\x,yscale=\y,-] (0,0) -- (0,1);
    \draw[xscale=\x,yscale=\y,dashed] (0,1) -- (1,1);
    \draw[xscale=\x,yscale=\y,-] (1,1) -- (1,0);
    \draw[xscale=\x,yscale=\y,dashed] (1,0) -- (0,0);
    \draw[xscale=\x,yscale=\y,dashed] (0,0) -- (1,1);
  } }
  \end{tikzpicture}
\qquad\qquad
  \begin{tikzpicture}[scale=2.5]
  \draw[-] (2/5,0) -- (1,0);
  %\draw[-] (0,0) -- (0,1);
  \draw[dotted] (1,0) -- (1,1);
  \draw[dotted] (0,1) -- (1,1);
  
  %generating point v
  \node[draw,circle,inner sep=0.5pt,fill,red] at (2/5,6/5) {};
  \draw[-] (2/5,6/5) node[right] {$(\tbetap, \trhop)$};
  
  %lower-left corner
  \foreach \x in {2/5} {
   \foreach \y in {6/5} {
    \fill[xscale=\x,yscale=\y,gray,nearly transparent] 
        (1/6-0.05,1/6) -- (0,1/6) -- (0,1) -- (0.95,1) -- cycle;
    \fill[xscale=\x,yscale=\y,gray,nearly transparent] 
        (1/6+0.05,1/6) -- (1,1/6) -- (1,0.95) -- cycle;
    \draw[xscale=\x,yscale=\y,-] (0,0) -- (0,1);
    \draw[xscale=\x,yscale=\y,dashed] (0,1) -- (1,1);
    \draw[xscale=\x,yscale=\y,-] (1,1) -- (1,0);
    \draw[xscale=\x,yscale=\y,dashed] (1,0) -- (0,0);
    \draw[xscale=\x,yscale=\y,dashed] (0,0) -- (1,1);
  } }
  %wrap-around pieces
  \fill[gray,nearly transparent] (0,0) -- (0,1/5) -- (2/5,1/5) -- (2/5,0) --cycle;
  \fill[gray,nearly transparent] (0,0) -- (0,1/5) -- (2/5,1/5) -- (2/5,0) --cycle;
  \fill[gray,nearly transparent] (0,1) -- (0,6/5) -- (2/5,6/5) -- (2/5,1) --cycle;
  %\fill[gray,nearly transparent] (0,1) -- (0,6/5) -- (2/5,6/5) -- (2/5,1) --cycle;
  \draw[dashed] (2/5-1/15,0) -- (2/5,1/5);
  \end{tikzpicture}
\qquad\qquad
  \begin{tikzpicture}[scale=2.5]
  %\draw[-] (0,0) -- (1,0);
  %\draw[-] (0,0) -- (0,1);
  \draw[dotted] (1,0) -- (1,1);
  \draw[dotted] (0,1) -- (1,1);
  
  %generating point v
  \node[draw,circle,inner sep=0.5pt,fill,red] at (8/7,6/5) {};
  \draw[-] (8/7,6/5) node[right] {$(\tbetap, \trhop)$};
  
  %lower-left corner
  \foreach \x in {8/7} {
   \foreach \y in {6/5} {
    \fill[xscale=\x,yscale=\y,gray,nearly transparent] 
        (1/8,1/8+0.03) -- (1/8,1) -- (0.97,1) -- cycle;
    \fill[xscale=\x,yscale=\y,gray,nearly transparent] 
        (1/6+0.03,1/6) -- (1,1/6) -- (1,0.97) -- cycle;
    \draw[xscale=\x,yscale=\y,-] (0,0) -- (0,1);
    \draw[xscale=\x,yscale=\y,dashed] (0,1) -- (1,1);
    \draw[xscale=\x,yscale=\y,-] (1,1) -- (1,0);
    \draw[xscale=\x,yscale=\y,dashed] (1,0) -- (0,0);
    \draw[xscale=\x,yscale=\y,dashed] (0,0) -- (1,1);
  %wrap-around piecies
  \fill[xscale=\x,yscale=\y,gray,nearly transparent] (0,0) -- (1/8,0) -- (1/8,1) -- (0,1) -- cycle;
  \fill[xscale=\x,yscale=\y,gray,nearly transparent] (0,0) -- (1/8,0) -- (1/8,1) -- (0,1) -- cycle;
  \draw[dashed] (0,6/5-3/20) -- (1/7,6/5);
  \fill[xscale=\x,yscale=\y,gray,nearly transparent] (0,0) -- (0,1/6) -- (1,1/6) -- (1,0) -- cycle;
  \fill[xscale=\x,yscale=\y,gray,nearly transparent] (1/8,0) -- (1/8,1/6) -- (1,1/6) -- (1,0) -- cycle;
  \draw[dashed] (8/7-4/21,0) -- (8/7,1/5);
  \fill[xscale=\x,yscale=\y,gray,nearly transparent] (7/8,0) -- (1,0) -- (1,1) -- (7/8,1) -- cycle;
  %\fill[xscale=\x,yscale=\y,gray,nearly transparent] (7/8,0) -- (1,0) -- (1,1) -- (7/8,1) -- cycle;
  %\draw[dashed] (0,6/5-3/20) -- (1/7,6/5);
  \fill[xscale=\x,yscale=\y,gray,nearly transparent] (0,5/6) -- (0,1) -- (1,1) -- (1,5/6) -- cycle;
  %\fill[xscale=\x,yscale=\y,gray,nearly transparent] (1/8,5/6) -- (1/8,1) -- (1,1) -- (1,5/6) -- cycle;
  %\draw[dashed] (8/7-4/21,0) -- (8/7,1/5);
  } }
  \end{tikzpicture}
\end{center}
\caption{Modified corner rectangle $\cCprimebar_{\tbetap, \trhop}$ in the torus $\bT= \RR^2/\ZZ^2$; 
with various $(\tbetap, \trhop)$.}
\label{fig:torus-mod-corner-rectangle}
\end{figure} 
%%%%%%%%%%%%%%%%%%%
%
% END Figure 6-3 Modified corner rectangle C-prime
%
%%%%%%%%%%%%%%%%%%%

%%%%%%%%%%%%%%%%%%%
%
% Proposition/ N6.2 Torus Subgroup criterion (NEGATIVE DILATIONS)
%
%%%%%%%%%%%%%%%%%%%
\begin{prop}[Torus subgroup criterion-negative dilations]
\label{lem:torus-neg}
For $\tbetap,\trhop>0$, the following conditions are equivalent.
\begin{enumerate}
\item[\textnormal{(Q1')}] 
The nonnegative commutator relation holds:
\[ 
[f_{-\tbetap/\trhop}, f_{-\tbetap}] (x) \geq 0 \quad \mbox{for all} \quad x \in \RR.
 \]

\item[\textnormal{(Q2')}] 
The cyclic torus subgroup 
\[
\langle (\widetilde{\tbetap},\widetilde{\trhop}) \rangle_{\bT} 
= {\{ (\widetilde{n\tbetap}, \widetilde{n\trhop} ) : n\in \ZZ \}}  \subset \RR^2/\ZZ^2=
\bT
\] 
is disjoint from the modified corner rectangle  
\[
\cCprimebar_{\tbetap,\trhop} = \{(\widetilde{x},\widetilde{y}) : 0\leq x \leq \tbetap, 
\, 0<y<\trhop,\, \frac{x}{\tbetap} \neq \frac{y}{\trhop} \} 
\subset \bT .
\]
\end{enumerate}
\end{prop}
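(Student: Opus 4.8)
The plan is to deduce the equivalence (Q1') $\Leftrightarrow$ (Q2') from the lattice disjointness criterion of Proposition~\ref{prop:diagonal-neg} by transporting its picture in $\RR^2$ to the torus $\bT = \RR^2/\ZZ^2$ via an explicit invertible linear map. In $(\tbetap,\trhop)$-coordinates one has $\uu = 1/\tbetap$, $\vv = 1/\trhop$, hence $\vv/\uu = \tbetap/\trhop$ and $1/\uu = \tbetap$, so (Q1') is literally property (P1') of Proposition~\ref{prop:diagonal-neg}. It therefore suffices to translate property (P3') of that proposition --- disjointness of $\Lambda_{\uu,\vv}^{+} = \bigcup_{k\in\ZZ}\bigl(\Lambda_{\uu,\vv} + (k,k)\bigr)$ from $\cDprime$ --- into (Q2').

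The key device is the diagonal linear map $L(x,y) := (x/\uu,\, y/\vv) = (\tbetap x,\, \trhop y)$, which is invertible since $\tbetap,\trhop > 0$. It carries $\Lambda_{\uu,\vv} = \uu\ZZ \times \vv\ZZ$ onto $\ZZ^2$ and the diagonal generator $(1,1)$ of $\Lambda_{\uu,\vv}^{+}$ to $(\tbetap,\trhop)$; thus $L(\Lambda_{\uu,\vv}^{+}) = \ZZ^2 + \ZZ(\tbetap,\trhop)$, whose image under the projection $\RR^2 \to \bT$ is exactly the cyclic subgroup $\langle(\widetilde{\tbetap},\widetilde{\trhop})\rangle_{\bT}$. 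On the region side, since $\cDprime$ is the union over $n\in\ZZ$ of the box $[n,n+1]\times(n,n+1)$ with its diagonal segment removed, and the $n$-th box is the $0$-th box translated by $(n,n)$, applying $L$ gives $L(\cDprime) = \bigcup_{n\in\ZZ}\bigl((n\tbetap, n\trhop) + \cCprime_{\tbetap,\trhop}\bigr)$, where the $n=0$ piece is precisely the modified corner rectangle $\cCprime_{\tbetap,\trhop}$ of Definition~\ref{defn:mod-corner-rectangle} (here one checks that the constraints $0 \le x \le \tbetap$, $0 < y < \trhop$, $x/\tbetap \ne y/\trhop$ are the images of $0 \le x \le 1$, $0 < y < 1$, $x \ne y$).

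Now $\Lambda_{\uu,\vv}^{+}$ is disjoint from $\cDprime$ if and only if $L(\Lambda_{\uu,\vv}^{+}) = \ZZ^2 + \ZZ(\tbetap,\trhop)$ is disjoint from $L(\cDprime)$, and since this subgroup is $\ZZ^2$-periodic, that is equivalent to disjointness of the images in $\bT$, i.e. of $\langle(\widetilde{\tbetap},\widetilde{\trhop})\rangle_{\bT}$ from the image of $L(\cDprime)$, which is $\bigcup_{n\in\ZZ}\bigl((\widetilde{n\tbetap}, \widetilde{n\trhop}) + \cCprimebar_{\tbetap,\trhop}\bigr)$. Finally, because $\langle(\widetilde{\tbetap},\widetilde{\trhop})\rangle_{\bT}$ is a subgroup, hence invariant under translation by each of its elements $(\widetilde{n\tbetap},\widetilde{n\trhop})$, disjointness from that union is the same as disjointness from the single component $\cCprimebar_{\tbetap,\trhop}$ --- which is (Q2'). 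Assembling the chain (Q1') $=$ (P1') $\Leftrightarrow$ (P3') $\Leftrightarrow$ (Q2') completes the argument.

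I expect the only real friction to be the boundary bookkeeping: verifying that the closed-in-$x$ / open-in-$y$ conventions built into $\cDprime$ and into $\cCprime_{\tbetap,\trhop}$ correspond correctly under $L$ and survive the projection to $\bT$ when ``wraparound'' occurs for $\tbetap \ge 1$ or $\trhop > 1$ (as in Figure~\ref{fig:torus-mod-corner-rectangle}), together with the clean but essential observation that passing between ``disjoint in $\RR^2$'' and ``disjoint in $\bT$'' is legitimate precisely because $\ZZ^2 + \ZZ(\tbetap,\trhop)$ is $\ZZ^2$-periodic. An alternative, more hands-on route would bypass Proposition~\ref{prop:diagonal-neg} and argue directly from the level-set identity \eqref{eqn:level3} and the strict rounding criterion of Proposition~\ref{prop:rounding-neg}, translating the condition $\sfloor{n}_{\tbetap/\trhop} \le \sfloor{n}_{\tbetap}$ for all $n\in\ZZ$ into the torus statement; but the coordinate-change argument above is shorter and I would present it instead.
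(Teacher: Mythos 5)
Your argument is correct and, modulo the paper's terse deferral to the Part I predecessor, it is exactly the intended proof: identify (Q1') with (P1') via $\uu=1/\tbetap,\ \vv=1/\trhop$, invoke Proposition~\ref{prop:diagonal-neg} to reduce to disjointness of $\Lambda_{\uu,\vv}^{+}$ from $\cDprime$, and transport that statement to the torus using the diagonal rescaling $L$ together with $\ZZ^2$-periodicity. Your choice of (P3') over (P2') is the right one, since it lets the subgroup property absorb the union over translates cleanly.
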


%%%%%%%%%%%%%%%%%%%
% Proof of Proposition/  N6.2
%%%%%%%%%%%%%%%%%%%
\begin{proof}
The proof follows  the same argument as that of 
\cite[Proposition 6.2]{LagR:2018a}, replacing $(\talpha,\trho)$ with $(\tbetap, \trhop)$ and replacing 
\cite[Proposition 5.2]{LagR:2018a}  with Proposition \ref{prop:diagonal-neg}.
\end{proof}

%%%%%%%%%%%%%%%%%%%%
%
% Subsection 6.4   Proof of necessity Proposition part (a) 
%
%%%%%%%%%%%%%%%%%%%%%

%%%%%%%%%%%%%%%%%%%
% SUB-SECTION N6.2.1 Proof of Necessity Prop. 8.3
%%%%%%%%%%%%%%%%%%%
\subsection{Proof of  Theorem \ref{thm:neg-necessary}}
\label{sec:neg-necess-proof}

We first recall a technical result needed  in the following proofs.

%%%%%%%%%%%%%%%%%%%
% Theorem 6.4 Closed subgroup theorem
%%%%%%%%%%%%%%%%%%%
\begin{thm}[Closed subgroup theorem]
\label{thm:lie-subgroup}
Given a Lie group $G$ and a subgroup $H\subset G$, 
the topological closure $\bar{H}$ of the subspace $H$ is a Lie subgroup $\bar{H}\subset G$.
\end{thm}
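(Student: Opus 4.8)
The plan is to reduce the statement to \emph{Cartan's closed subgroup theorem}, which asserts that a closed subgroup of a Lie group is an embedded Lie subgroup. The only content needed beyond that theorem is the elementary fact that the topological closure $\bar H$ of an arbitrary subgroup of a topological group is again a subgroup; once this is known, $\bar H$ is a closed subgroup (closed by definition of closure), and Cartan's theorem applies directly to give the conclusion.

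First I would verify that $\bar H$ is a subgroup of $G$. Inversion $\iota\colon g\mapsto g^{-1}$ is a homeomorphism of $G$, so $(\bar H)^{-1} = \overline{\iota(H)} = \bar H$. Multiplication $m\colon G\times G\to G$ is continuous, and $\overline{H}\times\overline{H} = \overline{H\times H}$, so $\bar H\cdot\bar H = m(\overline{H\times H}) \subseteq \overline{m(H\times H)} = \overline{H} = \bar H$. Since $\bar H$ contains the identity, it is a subgroup of $G$; and it is closed by construction.

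It then remains only to invoke Cartan's closed subgroup theorem: the closed subgroup $\bar H$ carries a (unique) structure of embedded submanifold of $G$ for which the inherited group operations are smooth, i.e. $\bar H$ is a Lie subgroup. The substantive input here is Cartan's theorem itself, whose standard proof shows that $\mathfrak h := \{X\in\mathfrak g : \exp(tX)\in\bar H \text{ for all } t\in\RR\}$ is a Lie subalgebra of $\mathfrak g$ and that $\exp$ restricts to a homeomorphism from a neighbourhood of $0$ in $\mathfrak h$ onto a neighbourhood of the identity in $\bar H$, translates of this chart by elements of $\bar H$ then furnishing an atlas. For our purposes this classical result can simply be cited, so the anticipated main obstacle, namely the proof of Cartan's theorem, is not one we need to confront.

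Finally, I note that in the only case used in this paper, $G = \bT = \RR^2/\ZZ^2$, one can bypass Cartan's theorem entirely: the closed subgroups of the torus $\bT$ admit an explicit description as finite unions of cosets of connected closed subgroups, where each connected closed subgroup of $\bT$ is either $\{0\}$, all of $\bT$, or a circle subgroup $\{\widetilde{(ta, tb)} : t\in\RR\}$ associated to a primitive integer vector $(a,b)$. Thus in context the entire statement reduces to the one-line verification that $\bar H$ is a group together with this elementary classification.
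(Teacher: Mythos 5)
Your proposal is correct and matches the paper's approach: the paper's proof is exactly to note that $\bar H$ is a closed subgroup and then cite Cartan's closed subgroup theorem (Lee, Theorem 20.12). You spell out the elementary verification that $\bar H$ is a subgroup, which the paper leaves implicit, and you add an optional ad hoc argument for $G=\bT$; both are fine, but the substance is the same citation.
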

\begin{proof}
See  Lee \cite[Theorem 20.12, p. 523]{Lee13}.
Under these hypotheses, $\bar{H}$ is a closed subgroup of $G$.
\end{proof}

%%%%%%%%%%%%%%%%%%%
% Proof of Theorem 6.1
%%%%%%%%%%%%%%%%%%%

\begin{proof}[Proof of Theorem \ref{thm:neg-necessary} (a)]
Suppose that at least one of $\tbetap, \trhop$ is irrational. 
Then the map
\begin{align*}
\widetilde{\phi}_{\tbetap, \trhop} : \ZZ &\to \bT \\
k &\mapsto (\widetilde{k\tbetap}, \widetilde{k\trhop})
\end{align*}
is injective and the subgroup $H = \langle (\widetilde{\tbetap},\widetilde{\trhop}) \rangle_{\bT}$ 
of the torus is infinite cyclic. 
Since $\bT$ is compact, $H$ cannot be discrete.
Thus the closure $\bar{H}$ of this subgroup  must be a Lie subgroup $\bar{H}$ of dimension 1 or 2, by 
the closed subgroup theorem for Lie groups
\cite[Theorem 20.12, p. 523]{Lee13}.

{\em Case 1.} If $\bar{H}$ is dimension $2$ then $H$ is dense in $\bT$
and will intersect the non-empty region $\cCprimebar_{\tbetap,\trhop}$. 
So condition (Q2') of  Proposition \ref{lem:torus-neg} is not satisfied, hence  
the nonnegative commutator relation (Q1') does not hold.

{\em Case 2.} If $\bar{H}$  is dimension $1$, then the parameters $\tbetap,\trhop$ must satisfy 
an  integer relation of the form
\begin{equation*}
m\tbetap + n\trhop = k, \quad m,n,k \in \ZZ ,\quad m, n,k\text{ coprime}%,\, k\neq 0
\end{equation*}
and the subgroup $\bar{H}$ is the projection to $\bT$ of the lines
 $\{ (x,y)\in \RR^2 : mx + ny \in \ZZ \}$.
 In the cases below, we will find solutions only when $k=0$, giving case $(ii^{\ast})$,
 or $k = \pm 1$, giving case $(i^{\ast})$. 

{\em Case 2a-1.} If the integers $m,n$ are nonzero and have opposite sign, 
then the lines in $\bar{H}$ have positive slope. 
If in addition $k\neq 0$ then the lines in $\bar{H}$ 
will have different slope ($ = -\frac{m}{n}$) than 
the punctured diagonal of $\cCprime_{\tbetap,\trhop}$ (slope $= \frac{\trhop}{\tbetap}$), 
so the  intersection $\bar{H}\cap \cCprime_{\tbetap,\trhop}$ 
 will be non-empty in a neighborhood of $(0,0)$.
See Figure \ref{fig:torusprime-pos-slope}.
Thus the commutator relation $[f_{-\tbetap/\trhop}, f_{-\tbetap}]\geq 0$ does 
not hold by Proposition \ref{lem:torus-neg}.

{\em Case 2a-2.} If $m,n$ have opposite sign and $k = 0$, 
then the assumption $\gcd(m,n,k) = 1$ implies $m,n$ are coprime.
Set $p=|m|$, $q=-|n|$ so that $p,q$ coprime positive integers, with $p\tbetap - q\trhop =0$ and 
$\bar{H} = \{(x,y) : px - qy \in \ZZ\}$.
$\bar{H}$ will not  intersect $\cCprime_{\tbetap,\trhop}$ in a neighborhood of $(0,0)$, since the segment of $\bar{H}$ at $(0,0)$ lies precisely in the diagonal  of $\cCprime_{\tbetap,\trhop}$. 
See Figure \ref{fig:torusprime-pos-slope}.

The ``modified corner rectangle'' $\cCprime_{\tbetap,\trhop}$ will then be disjoint from the subgroup $\bar{H}$ if and only if the rectangle 
is contained in the open region $\{(x,y) : |px-qy| < 1 \}$.
This containment occurs if and only if the extremal corners 
$(\tbetap, 0)$ and $(0, \trhop)$ lie in the closed region $\{ (x,y) : |px - qy| \leq 1\}$. 
(These corners are in the closure of $\cCprime_{\tbetap,\trhop}$, 
but not in $\cCprime_{\tbetap,\trhop}$ itself.)
This means $p\tbetap = q\trhop \leq 1$, so we are in case $(ii^{\ast})$  of the classification.

%%%%%%%%%%%%%%%%%%%
% Figure 6.3 [formerly fig 9]. 2x=3y in Z comparison
%%%%%%%%%%%%%%%%%%%

\begin{figure}[h]
\begin{center}
\begin{tikzpicture}[scale=3]
  %boundary of square torus
  \draw[-] (0,0) -- (1,0);
  \draw[-] (0,0) -- (0,1);
  \draw[dotted] (1,0) -- (1,1);
  \draw[dotted] (0,1) -- (1,1);
  
  %diagonal lines
  \draw[domain=0:1,smooth,variable=\x, blue] plot ({\x},{2/3*\x});
  \draw[domain=0:1/2,smooth,variable=\x, blue] plot ({\x},{2/3*\x+2/3});
  \draw[domain=1/2:1,smooth,variable=\x, blue] plot ({\x},{2/3*\x-1/3});
  \draw[domain=0:1,smooth,variable=\x, blue] plot ({\x},{2/3*\x+1/3});
  %\draw[domain=2/3:1,smooth,variable=\x, blue]  plot ({\x},{2/3*\x-1/2});
  %\draw[domain=0:1,smooth,variable=\x, blue] plot ({\x},{2/3*\x+1/4});
  
  %lower-left corner
  \foreach \n in {0} {
    \fill[xscale=2/5,yscale=0.6,gray,nearly transparent] 
        (\n,\n+0.05) -- (\n,\n+1) -- (\n+0.95,\n+1) -- cycle;
    \fill[xscale=2/5,yscale=0.6,gray,nearly transparent] 
        (\n+0.05,\n) -- (\n+1,\n) -- (\n+1,\n+0.95) -- cycle;
    \draw[xscale=2/5,yscale=9/15,-] (\n,\n) -- (\n,\n+1);
    \draw[xscale=2/5,yscale=9/15,dashed] (\n,\n+1) -- (\n+1,\n+1);
    \draw[xscale=2/5,yscale=9/15,-] (\n+1,\n+1) -- (\n+1,\n);
    \draw[xscale=2/5,yscale=9/15,dashed] (\n+1,\n) -- (\n,\n);
    \draw[xscale=2/5,yscale=9/15,dashed] (\n,\n) -- (\n+1,\n+1);
  }
  
  %generating point v
  \node[draw,circle,inner sep=0.5pt,fill,red] at (2/5,9/15) {};
  \draw[-] (2/5,9.5/15) node[above] {$(\tbetap,\trhop)$};
    
  \end{tikzpicture}
%%%%%%%%%%%
\qquad \qquad
%%%%%%%%%%%
\begin{tikzpicture}[scale=3]
  %boundary of square torus
  \draw[-] (0,0) -- (1,0);
  \draw[-] (0,0) -- (0,1);
  \draw[dotted] (1,0) -- (1,1);
  \draw[dotted] (0,1) -- (1,1);
  
  %diagonal lines
  \draw[domain=0:1,smooth,variable=\x, blue] plot ({\x},{2/3*\x});
  \draw[domain=0:1/2,smooth,variable=\x, blue] plot ({\x},{2/3*\x+2/3});
  \draw[domain=1/2:1,smooth,variable=\x, blue] plot ({\x},{2/3*\x-1/3});
  \draw[domain=0:1,smooth,variable=\x, blue] plot ({\x},{2/3*\x+1/3});
  %\draw[domain=2/3:1,smooth,variable=\x, blue]  plot ({\x},{2/3*\x-1/2});
  %\draw[domain=0:1,smooth,variable=\x, blue] plot ({\x},{2/3*\x+1/4});
  
  %lower-left corner
  \foreach \n in {0} {
    \fill[xscale=2/5,yscale=0.26,gray,nearly transparent] 
        (\n,\n+0.08) -- (\n,\n+1) -- (\n+0.90,\n+1) -- cycle;
    \fill[xscale=2/5,yscale=0.26,gray,nearly transparent] 
        (\n+0.08,\n) -- (\n+1,\n) -- (\n+1,\n+0.92) -- cycle;
    \draw[xscale=2/5,yscale=4/15,-] (\n,\n) -- (\n,\n+1);
    \draw[xscale=2/5,yscale=4/15,dashed] (\n,\n+1) -- (\n+1,\n+1);
    \draw[xscale=2/5,yscale=4/15,-] (\n+1,\n+1) -- (\n+1,\n);
    \draw[xscale=2/5,yscale=4/15,dashed] (\n+1,\n) -- (\n,\n);
    \draw[xscale=2/5,yscale=4/15,dashed] (\n,\n) -- (\n+1,\n+1);
  }
  
  %generating point v
  \node[draw,circle,inner sep=0.5pt,fill,red] at (2/5,4/15) {};
  \draw[-] (2/5,4.5/15) node[above] {$(\tbetap,\trhop)$};
    
  \end{tikzpicture}
\end{center}
\caption{Solutions to $2x-3y\in\ZZ$ compared to $\cCprime_{\tbetap,\trhop}$, when $k\neq 0$ and $k=0$.}
\label{fig:torusprime-pos-slope}
\end{figure} 
%%%%%%%%%%%%%%%%%%%
% END Figure 6.3 [formerly fig 9]. 2x=3y in Z comparison
%%%%%%%%%%%%%%%%%%%

{\em Case 2b.} Suppose  $m,n$ do not have opposite sign so we may assume 
that $m,n$ are  nonnegative without loss of generality.
Then $k\geq 1$ since $\tbetap, \trhop >0$.

{\em Case 2b-1.} If $n = 0$, then $\bar{H}$ consists of vertical lines 
$\{(x,y) : mx \in \ZZ \}$ 
which intersect the vertical boundary segments of $\cCprime_{\tbetap,\trhop}$. 
Thus the commutator relation
$[f_{-\tbetap/\trhop}, f_{-\tbetap}]\geq 0$ does not hold 
by Proposition \ref{lem:torus-neg}.

{\em Case 2b-2.} Suppose $n \ge 1$ and $k\geq 2$. 
Then the point $(\frac{1}{k}\tbetap, \frac{1}{k}\trhop)$ 
lies in the closed subgroup $\bar{H}$,
and $\bar{H}$ intersects the modified corner rectangle $\cCprime_{\tbetap, \trhop}$, 
in any open neighborhood of this point. 
See Figure \ref{fig:torusprime-neg-slope}.
Thus $H$ also intersects $\cCprime_{\tbetap, \trhop}$ 
and the commutator relation $[f_{-\tbetap/\trhop}, f_{-\tbetap}]\geq 0$ does not hold
by Proposition \ref{lem:torus-neg}.

{\em Case 2b-3.} Suppose $n\geq 1$ and $k=1$. 
Then the closed subgroup $\bar{H}$ does not intersect 
the modified corner rectangle $\cCprime_{\tbetap, \trhop}$ 
because $\cCprime_{\tbetap, \trhop}$ lies in the region 
$\{ (x,y) : 0 < mx + ny < 1\}$;
see Figure \ref{fig:torusprime-neg-slope}.
This subcase is case 
$(i^{\ast})$ of the classification.

%%%%%%%%%%%%%%%%%%%
% Figure 6.4  3x+2y \in Z comparison
%%%%%%%%%%%%%%%%%%%

\begin{figure}[h]
\begin{center}
  \begin{tikzpicture}[scale=3]
  \draw[-] (0,0) -- (1,0);
  \draw[-] (0,0) -- (0,1);
  \draw[dotted] (1,0) -- (1,1);
  \draw[dotted] (0,1) -- (1,1);
  \draw[domain=0:2/3,smooth,variable=\x, blue] plot ({\x},{1-3/2*\x});
  \draw[domain=2/3:1,smooth,variable=\x, blue] plot ({\x},{2-3/2*\x});
  \draw[domain=0:1/3,smooth,variable=\x, blue] plot ({\x},{1/2-3/2*\x});
  \draw[domain=1/3:1,smooth,variable=\x, blue] plot ({\x},{3/2-3/2*\x});
  
  %lower-left corner
  \foreach \n in {0} {
    \fill[xscale=4/9,yscale=1/3,gray,nearly transparent] (\n,\n+0.05) -- (\n,\n+1) -- (\n+0.95,\n+1) -- cycle;
    \fill[xscale=4/9,yscale=1/3,gray,nearly transparent] (\n+0.05,\n) -- (\n+1,\n) -- (\n+1,\n+0.95) -- cycle;
    \draw[xscale=4/9,yscale=1/3,-] (\n,\n) -- (\n,\n+1);
    \draw[xscale=4/9,yscale=1/3,dashed] (\n,\n+1) -- (\n+1,\n+1);
    \draw[xscale=4/9,yscale=1/3,-] (\n+1,\n+1) -- (\n+1,\n);
    \draw[xscale=4/9,yscale=1/3,dashed] (\n+1,\n) -- (\n,\n);
    \draw[xscale=4/9,yscale=1/3,dashed] (\n,\n) -- (\n+1,\n+1);
  }
  
  %generating point v
  \node[draw,circle,inner sep=0.9pt,fill,red] at (4/9,1/3) {};
  \draw[-] (4/9,1/3) node[right] {$(\tbetap, \trhop)$};
  
  \node[draw,circle,inner sep=0.9pt,fill,red] at (2/9,1/6) {};
  \draw[-] (2/10,1/6) node[left] {$(\frac{1}{2} \tbetap, \frac{1}{2} \trhop)$};
\end{tikzpicture}
%%%%%%%%%%%%%%%%%%%
\qquad\qquad
%%%%%%%%%%%%%%%%%%%
  \begin{tikzpicture}[scale=3]
  \draw[-] (0,0) -- (1,0);
  \draw[-] (0,0) -- (0,1);
  \draw[dotted] (1,0) -- (1,1);
  \draw[dotted] (0,1) -- (1,1);
  \draw[domain=0:2/3,smooth,variable=\x, blue] plot ({\x},{1-3/2*\x});
  \draw[domain=2/3:1,smooth,variable=\x, blue] plot ({\x},{2-3/2*\x});
  \draw[domain=0:1/3,smooth,variable=\x, blue] plot ({\x},{1/2-3/2*\x});
  \draw[domain=1/3:1,smooth,variable=\x, blue] plot ({\x},{3/2-3/2*\x});
  
  %lower-left corner
  \foreach \n in {0} {
    \fill[xscale=2/9,yscale=1/6,gray,nearly transparent] (\n,\n+0.05) -- (\n,\n+1) -- (\n+0.95,\n+1) -- cycle;
    \fill[xscale=2/9,yscale=1/6,gray,nearly transparent] (\n+0.05,\n) -- (\n+1,\n) -- (\n+1,\n+0.95) -- cycle;
    \draw[xscale=2/9,yscale=1/6,-] (\n,\n) -- (\n,\n+1);
    \draw[xscale=2/9,yscale=1/6,dashed] (\n,\n+1) -- (\n+1,\n+1);
    \draw[xscale=2/9,yscale=1/6,-] (\n+1,\n+1) -- (\n+1,\n);
    \draw[xscale=2/9,yscale=1/6,dashed] (\n+1,\n) -- (\n,\n);
    \draw[xscale=2/9,yscale=1/6,dashed] (\n,\n) -- (\n+1,\n+1);
}  
  %generating point v
  \node[draw,circle,inner sep=0.9pt,fill,red] at (2/9,1/6) {};
  \draw[-] (2/9,1/6) node[right] {$(\tbetap, \trhop)$};
\end{tikzpicture}
\end{center}
\caption{Solutions to $3x+2y\in\ZZ$ in the torus $\bT$; with $k=2$ and $k=1$}
\label{fig:torusprime-neg-slope}
\end{figure} 
%%%%%%%%%%%%%%%%%%%
% END Figure 6.4  3x+2y \in Z comparison
%%%%%%%%%%%%%%%%%%%

Theorem \ref{thm:neg-necessary} (a)  is established.
\end{proof}

%%%%%%%%%%%%%%%%%%%
% Proof of Necessity Theorem N6.4(b)
%%%%%%%%%%%%%%%%%%%

To prove  Theorem \ref{thm:neg-necessary} (b),
we mainly use $(\uu,\vv)$-coordinates rather than $(\tbetap, \trhop)$-coordinates.

\begin{proof}[Proof of Theorem \ref{thm:neg-necessary} (b)]
If $\tbetap,\trhop$ are both rational, then 
$H = \langle (\widetilde{\tbetap},\widetilde{\trhop})\rangle_{\bT}$ 
is a discrete subgroup of the torus.
By Lemma \ref{prop:diagonal-neg} the commutator inequality 
$[f_{-\tbetap/\trhop},f_{-\tbetap}]\geq 0$ means $\Lambda_{\uu,\vv}$ 
is 
disjoint from $\cDprime$, 
where $\uu = \tbetap^{-1}$ and $\vv = \trhop^{-1}$ and
$ \Lambda_{\uu,\vv} = \rowspan_\ZZ\begin{pmatrix}
\uu & 0  \\
0 & \vv 
\end{pmatrix}$.

Let $\lambda = \egcd(\uu,\vv)$, so that $(\uu, \vv) = (\lambda p, \lambda q)$ for coprime integers $p,q$.
By  
Lemma \ref{lem:neg-sporadic-converse}, there exists an integer $r\geq 1$ such that
the lattice
\[ \Lambda_{\ell p,\ell q} = \rowspan_\ZZ\begin{pmatrix}
\ell p & 0  \\
0 & \ell q 
\end{pmatrix}
\quad\text{with}\quad
 \ell =  1 + r(\lambda -1) \]
is disjoint from $\cD\cup\cDprime$.
This means $\Lambda_{\ell p,\ell q}$  must be of the form in Lemma \ref{lem:diagonal-vert-boundary}:
there exist integers $m\geq 0, n\geq 1$ such that $(\ell p, \ell q)$ lies on the hyperbola
\[ 
\{ (x,y) \in \RR^2 : \frac{m}{x} + \frac{n}{y} = 1 \}
\]
so we must have
$ \ell = \frac{m}{p}+\frac{n}{q} $.
The relation $\ell  = 1 + r(\lambda - 1)$
is equivalent to $\lambda = 1 + \frac{1}{r}(\ell - 1)$.
Thus we have
\[ 
\frac{\uu}{p} = \frac{\vv}{q} = \lambda 
= 1  + \frac1{r}\left(\frac{m}{p}+\frac{n}{q} - 1\right).
\]
Finally,  we recall that $\tbetap = \uu^{-1}$ and $\trhop = \vv^{-1}$
so taking reciprocals of the above equalities gives
\[ p\tbetap = q\trhop = \left( 1 + \frac1{r} \left(\frac{m}{p}+\frac{n}{q}\right) - 1 \right)^{-1} .\]
 Theorem \ref{thm:neg-necessary} (b)  is established.
\end{proof}
%This solution falls in  case $(iii)$ if $r\geq 2$.
%If $r = 1$, it falls in  case $(i^*)$.

Theorem \ref{thm:negative} is now proved,  combining
Theorem \ref{thm:neg-sufficient} and Theorem \ref{thm:neg-necessary}.

%%%%%%%%%%%%%%%%%%%%%%%%%
%
% Section 7 Proof of Closure property
%
%%%%%%%%%%%%%%%%%%%%%%%%%
\section{Proof of Closure Theorem \ref{thm:closed} for negative dilations}
\label{sec:consequences}

%%%%%%%%%%%%%%%%%%%
% Theorem 1.4 Closed subset
%%%%%%%%%%%%%%%%%%%
%\begin{customthm}{1.4}[Closed Set Property of $S$]
%\label{thm:closed-N}
%The set $S$ of all pairs of dilation factors $(\alpha, \beta)$ which satisfy the nonnegative commutator inequality ${[f_\alpha, f_\beta]\geq 0 }$, where $f_\alpha(x) = \floor{\alpha x}$,
%is a closed subset of $\RR^2$.
%\end{customthm}

%%%%%%%%%%%%%%%%%%%
% Proof of Theorem 1.2
%%%%%%%%%%%%%%%%%%%
\begin{proof} 
Let $\RR^2_{-} = \{ (\alpha,\beta) \in \RR^2 : \alpha, \beta < 0 \}$
denote the open third quadrant,
and let 
$
S_{-} = S\cap \Qthree = \{ (\alpha,\beta) \in \Qthree : %\alpha,\beta <0,\, 
[f_\alpha,f_\beta]\geq 0\}$.
Let $\Si$, $\Sii$, and $\Siii$  
denote the subsets of $S_{-}$
corresponding to cases $(i)$, $(ii)$, and $(\text{iii}^{\ast})$ of Theorem~\ref{thm:negative}.
It is straightforward to check that  $\Si$ and $\Sii$ are closed in $\RR^2_{-}$.
The set $\Siii$  contains the ``sporadic rational solutions'';
 by itself,
$\Siii$ does not form a closed set in $\RR^2_{-}$.

{\bf Claim 1.} {\em If a sequence $(\alpha_i,\beta_i)$ of points in $\Siii \smallsetminus \Si$ 
has a limit point in the open third quadrant,
then the values $\alpha_i$ are eventually constant.}

Proof of Claim 1: 
Suppose $(\alpha_i, \beta_i)$ is a sequence in $\Siii \smallsetminus \Si $ which 
 converges to a limit $(\alpha, \beta)$ in the open third quadrant.
Since the coordinate $ \beta$ is strictly negative, there is some positive integer $N$ such that 
$\beta < - \frac{1}{N}$.
This implies  $\beta_i < -\frac{1}{N}$
for all sufficiently large $i$.
Suppose $\alpha_i = - \frac{q_i}{p_i}$ as a reduced fraction.
Case $(iii^*)$ of Theorem \ref{thm:negative} says that
\begin{equation}
\label{eq:beta}
 \beta_i = -\frac{1}{p_i} \left(1 + \frac1{r_i} \left(\frac{m_i}{p_i} + \frac{n_i}{q_i} - 1 \right) \right)^{-1}
\end{equation}
for some integers $m_i \geq 0$, $n_i \geq 1$, and $r_i \geq 1$;
since the point is not in case $(i)$, we have $r_i \geq 2$.
From these conditions it follows that 
$-\frac{2}{p_i} < \beta_i $.
Since eventually $\beta_i < -\frac{1}{N}$, these bounds 
 imply that $\frac1{p_i} > \frac1{2N}$ %$p_i < 2N$ 
 for sufficiently large $i$.
This means that for sufficiently large $i,j$, if $\alpha_i \neq \alpha_j$ then 
$ 
|\alpha_i - \alpha_j | = \left|\frac{q_i}{p_i} - \frac{q_j}{p_j} \right| 
> \frac{1}{4N^2} 
.$
Thus our assumption that the sequence $(\alpha_i, \beta_i)$ converges implies that $\alpha_i$ 
must be eventually constant,
which proves  Claim 1.

{\bf Claim 2.} 
{\em The set of points in  $\Siii \smallsetminus \Si $ with fixed $\alpha= -\frac{q}{p}$  
has all its limit points in $\Sii$. }

\noindent This claim is straightforward to verify using \eqref{eq:beta}, with $p_i = p$ and $q_i = q$ fixed.

The  two claims above imply that all limit points of $\Siii$ are  in 
$\Si$ or $\Sii$,
so the union 
${S}_{-} = \Si \cup  \Sii \cup  \Siii$ 
is a closed subset of $\RR^2_{-}$.
\end{proof}

\section*{Acknowledgements}
The second author thanks David Speyer and  Yilin Yang for helpful conversations. 
%Work of the  first author was partially supported by NSF grant DMS-1401224 and DMS-1701576,
%by MSRI as a Chern Professor and by a Simons  Fellowship in Mathematics.
%%MSRI is supported in part by the National Science Foundation. 
%Work of the second author was partially supported by NSF grant DMS-1600223.

%%%%%%%%
% REFERENCES
%%%%%%%%

\end{document}